\theoremstyle{thmstyleone}%
\newtheorem{theorem}{Theorem}[section]
\newtheorem{proposition}[theorem]{Proposition}%
\newtheorem{corollary}[theorem]{Corollary}
\newtheorem{lemma}[theorem]{Lemma}
\numberwithin{equation}{section}
\theoremstyle{thmstyletwo}%
\newtheorem{remark}{Remark}%
\theoremstyle{thmstylethree}%
\newtheorem{definition}{Definition}%
\newtheorem{assumption}{Assumption}
\newtheorem*{assumption*}{Assumption}
\def\calC{\mathcal{C}} 
\def\calX{\mathcal{X}} 
\def\pbsI{\mathbf{I}}
\def\pbsJ{\mathbf{J}}
\def\bN{\mathbb{N}}
\def\bR{\mathbb{R}}
\def\idI{\mathbb{I}}
\def\bfA{\mathbf{A}}
\def\bfB{\mathbf{B}}
\def\bfS{\mathbf{S}}
\def\bfp{\mathbf{p}}
\def\bfu{\mathbf{u}} 
\def\bfx{\mathbf{x}}
\def\bfy{\mathbf{y}}
\def\bfz{\mathbf{z}}
\newcommand{\ObjRelErr}{\ensuremath{\bar\varkappa}}
\newcommand{\err}{\ensuremath{\mathbf{E}}}         
\newcommand{\rerr}{\ensuremath{E}}      
\newcommand{\trap}{\ensuremath{\wedge}}            
\newcommand{\tmax}{\ensuremath{\mathrm{max}}}      
\newcommand{\tmin}{\ensuremath{\mathrm{min}}}      
\newcommand{\tint}{\ensuremath{\mathrm{int}}}      
\newcommand{\tinv}{\ensuremath{\mathrm{inv}}}      
\newcommand{\tODE}{\ensuremath{\mathrm{ODE}}}
\newcommand{\teq}{\ensuremath{\mathrm{eq}}}
\newcommand{\tT}{\text{T}}                          
\begin{document}

\title[Sensitivity Approximation by the Peano-Baker Series]{Sensitivity Approximation by the Peano-Baker Series}


\author[1,2]{\fnm{Olivia} \sur{Eriksson}}

\author[2,3]{\fnm{Andrei} \sur{Kramer}}

\author*[4]{\fnm{Federica} \sur{Milinanni}} \email{fedmil@kth.se}
\equalcont{Authors listed in alphabetical order.}

\author[4,5]{\fnm{Pierre} \sur{Nyquist}}

\affil[1]{\orgdiv{Department of Electrical Engineering and Computer Science}, \orgname{KTH Royal Institute of Technology}, \orgaddress{\city{Stockholm}, \country{Sweden}}}

\affil[2]{\orgname{Science for Life Laboratory}, \orgaddress{\city{Solna}, \country{Sweden}}}

\affil[3]{\orgdiv{Department of Neuroscience}, \orgname{Karolinska Institute}, \orgaddress{\city{Solna}, \country{Sweden}}}

\affil[4]{\orgdiv{Department of Mathematics}, \orgname{KTH Royal Institute of Technology}, \orgaddress{\city{Stockholm}, \country{Sweden}}}

\affil[5]{\orgdiv{Department of Mathematical Sciences}, \orgname{Chalmers University of Technology and University of Gothenburg}, \orgaddress{\city{Gothenburg}, \country{Sweden}}}


\abstract{In this paper we develop a new method for numerically approximating sensitivities in parameter-dependent ordinary differential equations (ODEs). Our approach, intended for situations where the standard forward and adjoint sensitivity analyses become too computationally costly for practical purposes, is based on the Peano-Baker series from control theory. Using this series, we construct a representation of the sensitivity matrix $\bfS$ and, from this representation, a numerical method for approximating $\bfS$. We prove that, under standard regularity assumptions, the error of our method scales as $\mathcal{O}(\Delta t^2_{\tmax})$, where $\Delta t_{\tmax}$ is the largest time step used when numerically solving the ODE. We illustrate the performance of the method in several numerical experiments, taken from both the systems biology setting and more classical dynamical systems. The experiments show the sought-after improvement in running time of our method compared to the forward sensitivity approach. In experiments involving a random linear system, the forward approach requires roughly $\sqrt{n}$ longer computational time, where $n$ is the dimension of the parameter space, than our proposed method.}

\keywords{Sensitivity analysis, Peano-Baker series, ordinary differential equations, error analysis}


\pacs[MSC Classification]{65L05, 65L20}

\maketitle
\textbf{ORCID}: \href{0000-0003-0740-4318}{https://orcid.org/0000-0003-0740-4318}~(OE), \href{0000-0002-3828-6978}{https://orcid.org/0000-0002-3828-6978}~(AK), \href{0000-0003-3635-8760}{https://orcid.org/0000-0003-3635-8760}~(FM), \href{0000-0001-8702-2293}{https://orcid.org/0000-0001-8702-2293}~(PN)
\section{Introduction}\label{sec:intro}

Mathematical models are used in all areas of science and engineering to model more and more complex real-world phenomena. An important aspect of such modeling is to understand how changes and uncertainty in model parameters translate to the output of a model. The first question is the topic of \textit{sensitivity analysis}, an active research area at the intersection of several branches of mathematics and its applications; see e.g.\ \cite{Frank78, CS85, Cacuci03, SaltelliA2008GSAT, CIBN05} and references therein. Motivated by problems arising in systems biology, specifically concerning statistical inference and uncertainty quantification of models based on non-linear dynamical systems, in this paper we consider the rather classical question of local sensitivity analysis in ODE models. More precisely, we are interested in the design of efficient numerical methods for approximating the sensitivity matrix for such models. Although our motivation originally comes from wanting sensitivity approximation methods that are suitable for Markov chain Monte Carlo (MCMC) methods in the systems biology setting—see Section \ref{subsec:motivation} for more details and references—we note that the general problem of computing sensitivities in an ODE model is of great interest in a number of fields.

The general starting point is a system of ordinary differential
equations (ODEs), which we take to be of the form
\begin{equation}
\label{eq:ODEsystem}
\begin{cases}
\dot{\bfx}(t) = f(t,\bfx,\bfp)\,,\\
\bfx(t_0) = \bfx_0\,,
\end{cases}
\end{equation}
where $\bfx(t)$ represents the state of the system at time $t$, and
$\bfp$ denotes a set of model parameters\footnote{In a typical systems biology
setting there is also often an input function $\bfu(t)$ explicit in
the arguments of $f$. Because such inputs $\bfu (t)$ do not play any role in developing the numerical methods that are the focus of this paper, and for simplicity of notation, we here consider $\bfu(t)$ as embedded into $f$.}; a more precise definition,
including the state spaces of the different quantities involved, is
given in Section \ref{sec:problem}.

For such a model, it is important to understand how changes in the parameter-vector $\bfp$ affect the output $\bfx$: we are interested in computing the derivatives $S_i^{~j}(t)=\partial x_i (t) / \partial p_j$, the \textit{sensitivities} of the model, for all component-combinations $(i, j)$\footnote{In some cases we are interested in computing the sensitivities of an output function $h$ applied to the state $\bfx$, i.e. $\bar S_i^{~j}(t)=\partial h(\bfx)_i (t) / \partial p_j$. By the chain rule, we can reduce this problem to the computation of the sensitivities $S_i^{~j}$ of the state $\bfx$; in fact,  $\bar S_i^{~j}(t)=\sum_{k}\frac{\partial h(\bfx)_i}{\partial x_k}S_k^{~j}(t)$.}. The sensitivities provide local information about the parameter space that is important for a variety of tasks within modeling where this space is explored, e.g., quantifying uncertainty, finding optimal parameters, and experimental design \cite{CS85, Cacuci03, CIBN05, CE17}.

For all but very simple systems the sensitivities are not available in an (explicit) analytical form. Instead, we have to turn to different types of approximations \citep{Cacuci03, CIBN05}. The two standard approaches for obtaining numerical approximations to the sensitivities of an ODE or PDE model are (i) the \textit{forward}, or \textit{variational}, approach \cite{CS85, Cacuci03}, and (ii) the \textit{adjoint} approach \citep{Marchuk95, MAS96, CLPR03}. There is a vast literature on both methods and here we only give a brief review; for a comparison of the two approaches, see \cite{CE17}.

Between the forward and adjoint methods, forward sensitivity analysis is the more straightforward of the two. It is based on finding an ODE system satisfied by the sensitivities $S_i^{~j}(t)$, along with appropriate initial values. The solution to this system can then be approximated together with that of the original ODE for $\bfx$ \citep{Cacuci03, CE17}. It is well-known that for high-dimensional problems—$n_x\times n_p\gg 1$, where $n_x$ and $n_p$ are the dimension of the state space and parameter space, respectively—computation of the sensitivities using forward sensitivity analysis becomes slow. As a consequence, this method can be too computationally costly for certain applications, and more efficient methods are needed. Indeed, the manuals of solvers such as \textsc{cvodes} \citep{gardner2022sundials} warn against this approach for large systems~\cite{CVODES_5_7_Chap_adj_sens}, favouring adjoint methods where possible.

Adjoint sensitivity analysis is designed to ease the computation of sensitivities with respect to $\bfp$ of an objective function
\begin{align*}
    G(\bfx, \bfp) = \int _0 ^T g(\bfx, t, \bfp) dt,
\end{align*}
for some function $g$ (satisfying certain smoothness assumptions). Alternatively, the method can be used to compute sensitivities of $g(\bfx (T), T, \bfp)$, that is a function only defined at the final time $T$. The method amounts to introducing an augmented objective function from $G$ by introducing Lagrange multipliers associated with the underlying ODE, and computing the derivatives $dG/dp_j$ of this objective function; see e.g.\ \cite{CLPR03, Marchuk95, MAS96} for the details.

In contrast to the forward sensitivity analysis, the adjoint method does not rely on the actual state sensitivities, $\partial x_i / \partial p_j$, but rather on the adjoint state process. It is possible, albeit impractical, to formulate the problem so that the adjoint sensitivity analysis computes $\partial x_i / \partial p_j$, the quantities of interest in this paper: define $g(\bfx,t,\bfp)=x_i(t)$ and use the adjoint sensitivity method to compute the sensitivities of $g$ at the specific times $T=t_k$. However, this must be repeated for each component $x_i$ of the state $\bfx$ and for each time $t_k$ at which an approximation of the sensitivity of the ODE model is sought. As a result, the use of adjoint sensitivity is not recommended for this purpose, especially when (i) the dimension of the state space is large, and (ii) there are many time points $t_k$ at which the sensitivities are to be computed.


The forward sensitivity and the adjoint methods are both too computationally expensive for the type of applications we have in mind, in particular when there is a need to approximate sensitivities in larger models (i.e., high-dimensional state and/or parameter space).  As a first step towards resolving this issue, we introduce a new method, based on the Peano-Baker series
\cite{PeanoBaker}, referred to as the \textit{Peano-Baker Series (PBS) method}. 
It turns out that this method can be unstable when applied to stiff problems\footnote{\emph{Stiffness} has no
  rigorous definition, here we treat the term as meaning that we need
  an algorithm of \emph{high order} and small \emph{step sizes}; see, e.g., \cite{HT93}.}, which are common in systems biology applications. We therefore modify the PBS method by adding a stiffness detection mechanism and refining the time steps accordingly: this leads to the \textit{PBS algorithm with refinement (PBSR)}. Additionally, we introduce an algorithm that we call the \textit{Exponential (Exp) algorithm}, which is cheaper, though in general less accurate, and has the advantage of being unconditionally stable. Because we will mainly work with stiff problems, the PBSR and Exp algorithm are the methods that we will implement in practice.
  
The methods we propose utilize two different approximation techniques, that differ in their stability properties, accuracy and
computational time. By the use of these two techniques, we can handle various dynamical properties that arise in different parts of the state and parameter space associated with the ODE system, e.g. stiffness and transient versus equilibrium behaviour. We show that
by combining these techniques we construct numerical methods to
approximate the sensitivity matrix with a computational time and an
accuracy that are suitable for use within MCMC methods, which is our main motivation for the sensitivity approximation (see Section~\ref{subsec:motivation}). 
Using the proposed methods
in the context of MCMC sampling is, however, ongoing work and in this
paper we consider the methods solely from a numerical analysis
perspective.

The remainder of the paper is organised as follows. In Section \ref{subsec:motivation} we expand briefly on our particular motivation for studying the problem of computing sensitivities in an ODE system. This is followed by an outline of the notation used throughout the paper (Section \ref{subsec:notation}). In Section~\ref{sec:problem} we give a precise formulation of the ODE
model and the associated sensitivities of interest. Next, in
Section~\ref{sec:method} we derive analytical representations of the
sensitivities in two different cases: when the solution is at
equilibrium (Theorem~\ref{thm:constantMat}), and in the general case
(Theorem~\ref{thm:generalODESol}). In Section~\ref{sec:genSolution}
the analytical representations are used to propose a new method, the PBS method, outlined in
Algorithm~\ref{alg:approx}, for approximating sensitivities in the ODE
setting considered in this paper. Before analysing the method, we discuss
related stability issues in Section~\ref{sec:stability}, where we introduce the PBSR and the Exp algorithms (Algorithms~\ref{alg:PBSR} and \ref{alg:ExpAlg}, respectively), and analyze their properties. The main
theoretical result of the paper is the error analysis of the PBS and PBSR methods, presented in
Section~\ref{sec:errEst} in Theorem~\ref{thm:error} and in Corollary~\ref{cor:errorPBSR}, respectively; the corresponding proofs are given in
Section~\ref{sec:proofErrEst}. In Section~\ref{sec:numerical} we
present some numerical experiments that showcase the performance of
the proposed methods. In particular, we compare our methods with the forward
sensitivity approach and show superior performance. We end the paper
with a discussion in Section~\ref{sec:conclusion}.

\subsection{Motivation}
\label{subsec:motivation}
Our interest in the problem originally stems from uncertainty quantification for models of
intracellular pathways, such as those studied
in~\cite{Eriksson19}. Forward sensitivity analysis via \textsc{cvodes} is
prohibitively slow for such systems; for example, when sensitivity analysis is added for the model used in~\cite{Eriksson19} for modelling the CaMKII system (see also Section \ref{sec:numerical} for additional details on this model), the simulation time increases roughly 100-fold, compared to simulations without sensitivity analysis.

In the systems biology setting, the state process $\bfx(t)$
corresponds to the concentration of different compounds internal to
the cell (e.g. proteins, protein complexes, or ions), and the
parameters $\bfp = (p_1, \dots , p_{n_p})$ encode e.g. reaction
  rate coefficients of the chemical reactions involved in the system. In this setting, we are interested in uncertainty quantification for the unknown parameters within a Bayesian framework. One of the standard methods for
uncertainty quantification involves sampling from posterior
distributions, which can be carried out via MCMC
methods \cite{GelmanAndrew2013BDAT}. We implement some methods from the MCMC class in the R package UQSA\footnote{\textbf{U}ncertainty \textbf{Q}uantification and \textbf{S}ensitivity \textbf{A}nalysis} \cite{kramer2023uqsa}. 

In general, MCMC sampling often requires one to compute different quantities that will involve the sensitivity matrix $\bfS$, such as the parameter derivatives of the log-likelihood function. 
As a motivating example, 
recent developments in the MCMC community propose to take a
differential-geometric perspective on sampling, exploiting potential
underlying concepts related to statistical inference \cite{GiroCal,
  TSA20, BFXKG18}. This in turn relies on an appropriate choice of the metric
tensor on the parameter space. A good choice of metric tensor \cite{Rao45,GiroCal} is the expected Fisher information. For a generic random variable $Y$
(e.g. a model for the observation of data) and associated parameter
$\theta$, with conditional probability density function $p_{Y \mid \theta }$, the expected
Fisher information is defined as:
\begin{equation}
  F (\theta) = - E_{Y \mid \theta} \left[ \frac{\partial ^2}{ \partial \theta ^2} \log \left( p_{Y \mid\theta} (Y \mid \theta) \right)  \right]\,.   \label{eq:FisherInformation}
\end{equation}
In the case of a Gaussian measurement error model, 
the distribution of each
(independent) observable $Y_{ij}$ has two scalar parameters:
$\mu_{ij}$ and $\sigma_{ij}$, where $i$ is the state index and $j$
the measurement time-point
index.

When combined with an ODE model, the entries of the Fisher information $F$
then contain the sensitivities of the model alongside the parameters of the
noise model~\citep{Kramer2016,GiroCal}:
\begin{equation}
  \label{eq:F}
   F(\theta)_i^{~j} = \sum_{m,l} S_m^{~i}(t_l) \sigma_{ml}^{-2}  S_m^{~j}(t_l)\,,
\end{equation}
where $l$ is the index of measurement times within a \emph{time
  series}.

The quality of the sensitivity estimates in \eqref{eq:F}, and possibly the log-likelihood and its derivatives, affects
the efficiency of MCMC in terms of the convergence speed. Even an
efficiently implemented MCMC method, such as the simplified manifold MALA (\textsc{smmala}, see \cite{GiroCal}), will
require repeatedly approximating the sensitivities for all time
points $l$, possibly millions of times for different values of
$\theta$. Because the sensitivities $S_i^{~j}(t)$ are required in \eqref{eq:F}, and also
to compute the gradient of the log-likelihood function, the adjoint
sensitivity analysis method is not appropriate for this type of setting.

Our main goal is to construct a method that can approximate the
sensitivities\footnote{The sensitivities must be approximated for all time points $\{t_l\}$ that appear in the
  likelihood function, i.e., any phase of the time span under consideration: initial,
  transient and equilibrium (possibly a limit cycle).} \emph{after} the initial value
problem for $\bfx(t)$ has been solved independently (not
simultaneously)\footnote{To evaluate the likelihood function $p(Y\mid\theta)$, we need to solve
the initial value problem \eqref{eq:ODEsystem} for $\bfx$. This
solution has to be as accurate as possible, as it directly affects the
inferred shape of the posterior distribution. For this reason, we can
safely assume that in all the error terms in this manuscript, the
relative error of $\bfx$ will be comparatively small. In practice, we
will use the best available solvers and set tight tolerances when
solving for $\bfx(t)$.}. This decoupling of the two problems allows us to use
any numerical solver, in almost any programming language, rather than
being tied to a solver that is built to include forward
  sensitivity analysis (such as \textsc{cvodes}). Furthermore, the problems we encounter in
systems biology are often stiff, and
so are the associated sensitivity equations. To avoid slowing down numerical solvers,
we would like the stiffness of the sensitivity equations (i.e., the ODE
system for $\bfS$) to not affect the solver's step size for the ODE system for $\bfx$.

In MCMC sampling, when there is an underlying ODE system, the sensitivities are used to inform the MCMC update step. A more precise approximation of the sensitivity matrix will lead to better—as in more aligned with the true transition kernel, based on the true sensitivity matrix $\bfS$—proposals for the next state of the underlying Markov chain. This in turn should lead to faster convergence to equilibrium of the chain in the case of MCMC algorithms such as such as \textsc{smmala} and \textsc{rmhmc}, meaning
fewer MCMC iterations are needed to reach convergence of the
Markov chain. However, precise approximations of the sensitivity matrix lead to a higher cost
per iteration, increasing the cost of the update step, and thus the entire MCMC scheme.  In this paper we propose new methods for approximating the
sensitivity matrix that are
more efficient for our
purposes.
The trade-off between accuracy
and efficiency of the sensitivity approximation should lead to an
increased overall performance of the MCMC method\footnote{Such a comparison is outside the scope of this manuscript.}. We are planning to implement the sensitivity approximation methods proposed in this paper into the MCMC methods available in the R package UQSA, and investigate the impact on performance.

\subsection{Notation}
\label{subsec:notation}
The following notation is used. Elements in $\bR ^n$ are denoted by bold font, e.g. $\bfx, \bfy, \bfz$, whereas $x,y,z$ denote elements of $\bR$; note that the relevant dimension $n$ will change between different vectors. With some abuse of notation, $0$ denotes the zero element in $\bR ^n$ for any $n\geq 1$. The identity matrix is denoted by $\idI$, or $\idI_n$ whenever we want to emphasise the size $n \times n$. For a matrix $\bfA$, the matrix exponential $e^\bfA$ is defined in the usual way:
\begin{equation*}
    e^\bfA = \sum _{h \geq 0} \frac{1}{h!} \bfA ^h.
\end{equation*}

Objects with a hat, such as $\hat f$ or $\hat \bfx$, refer to approximations.
For $k \in \bN$ and an open set $A$ in $\bR ^n$, $\calC^k (A{;}\bR^m)$ is the space of continuous functions on $A$, taking values in $\bR ^m$, with continuous partial derivatives up to the $k$th order; for $m \geq 2$, the latter is defined in the usual way through natural projections. For a compact set $K$, $\calC^k(K{;}\bR ^m)$ refers to functions that are $\calC^k(A{;}\bR ^m)$ on some open neighbourhood $A$ of $K$.

For a function $f: \bR ^n \to \bR ^m$, $f_l$ denotes the $l$-th entry of the value of $f$, with $l=1,\dots,m$, and $\nabla f$ denotes the Jacobian matrix of $f$.

For a normed space $(\calX, \|\cdot\|)$, $\calC^0([0,T];\calX)$ denotes the space of continuous functions from $[0,T]$ to $\calX$, and $\|\cdot \| _{[0,T]}$ denotes the supremum norm over this space: for $g \in \calC^0 ([0,T]{;}\calX)$,
\begin{align*}
    \|g\|_{[0,T]} = \sup _{t \in [0,T]} \|g (t) \|.
\end{align*}
Unless otherwise stated, we will let $\|\cdot\|$ denote an arbitrary matrix norm; because we are working on a finite-dimensional space, the specific choice of matrix norm is not important for the results of this paper (discussed more in Section~\ref{sec:errEst}).

When listing the arguments of a function, we
omit nested repetitions of arguments: when $t$, $\bfx$, and $\bfp$
appear together in an argument list, they always belong to the same
time-slice and parametrization. For example, in \eqref{eq:ODEsystem} $f(t,\bfx,\bfp)$ means the
same as $f(t,\bfx(t),\bfp)$, or indeed $f(t,\bfx(t,\bfp),\bfp)$.

Additional notation that is particular to this work is defined as needed, particularly in the error analysis in Sections~\ref{sec:errEst}-\ref{sec:proofErrEst}.

\section{Problem Formulation}
\label{sec:problem}

The general problem of interest is to compute sensitivities for the solution of the parametrised initial value problem (IVP)
\begin{equation}
\label{eq:ODE}
\begin{cases}
\dot{\bfx}(t) = f(t,\bfx,\bfp)\,,\\
\bfx(t_0) = \bfx_0\,,\\
\end{cases}
\end{equation}
with $\bfx (t)\in\bR ^{n_x}$, and $\bfp \in \bR ^{n_p}$. As noted in Section~\ref{sec:intro}, although our interest in this problem comes from a desire to conduct uncertainty quantification for models in systems biology, the problem is much more general and arises in a wide variety of areas within applied mathematics. We therefore work in a general framework for the remainder of this paper.

We assume the existence and uniqueness of solutions:
\begin{assumption}
The function $f: \bR \times \bR ^{n_x} \times \bR ^{n_p} \to \bR ^{n_x}$ is globally Lipschitz continuous in the state variable coordinate, for all values of $t$ and $\bfp$, and uniformly $C^1( \bR ^{n_p}; \bR ^{n_x})$ in the parameter coordinate for all values of $t$ and $\bfx$.
\end{assumption}
This is in accordance with the Picard-Lindelöf theorem for existence and uniqueness of the solution to the initial value problem with $f$ and $x_0$. 

\begin{definition}
Given a solution $\bfx$ of the ODE system \eqref{eq:ODE}, the \textit{sensitivity} at time $t$ of the $l$-th component of $\bfx$ with respect to parameter $p_i$ is defined as
\begin{equation}
  S_{l}^{~i}(t)=\frac{\partial x_l(t)}{\partial p_i},\quad l=1,\dots,n_x,\;i=1,\dots,n_p.\label{eq:Sli}
\end{equation}
\end{definition}
By differentiating \eqref{eq:ODE} with respect to $\bfp$, we obtain that the time derivative of the sensitivities can be expressed in matrix form as
\begin{equation}
\label{ODE_S}
    \dot{\bfS}(t) = \nabla_x f(t,\bfx,\bfp)\cdot \bfS(t) + \nabla_pf(t,\bfx,\bfp),
\end{equation}
where $\dot{\bfS}(t), \bfS(t), \nabla_p f(t,\bfx,\bfp)\in\mathbb{R}^{n_x\times n_p}$ are matrices with elements $(l,i)\in\{1,\dots,n_x\}\times\{1,\dots,n_p\}$ given, respectively, by

  \begin{equation*}
\dot{S}_{l}^{~i}(t)\,,\quad\quad S_{l}^{~i}(t)\quad\quad \text{and}\quad\quad\frac{\partial f_{l}(t,\bfx,\bfp)}{\partial p_i}\,,
\end{equation*}
and $\nabla_xf(t,\bfx,\bfp)\in\mathbb{R}^{n_x\times n_x}$ with entries
\begin{equation*}
\nabla_xf(t,\bfx,\bfp)_{l}^{~j}=\frac{\partial f_{l}(t,\bfx,\bfp)}{\partial x_j}, \quad (l,j)\in\{1,\dots,n_x\}\times\{1,\dots,n_x\}\,.
\end{equation*}

Because the initial condition $\bfx_0$ does not depend on $\bfp$, we have $S_{l}^{~i}(t_0)=\frac{\partial x_{0,l}}{\partial p_i}=0$ and equation \eqref{ODE_S} is a linear ODE system with initial condition $\bfS_{\mid t=t_0}=\bfS_0=0\in\mathbb{R}^{n_x\times n_p}$.

Let $\{t_k\}_{k=1}^K$, with $0=t_0 < t_1 < \dots < t_K=T$, be the time instants at which we want to compute the sensitivity matrix $\bfS$, and let $\bfS_k\in\bR^{n_x\times n_p}$ denote the exact sensitivity matrices at times $t_k$, with $k=0,\dots,K$. We can then formulate $K$ ODE problems in an iterative fashion,
\begin{equation}
\label{sensitivityODE}
    \begin{cases}
    \dot{\bfS}(t) = \nabla_xf(t,\bfx,\bfp)\cdot \bfS(t) + \nabla_pf(t,\bfx,\bfp)\\
    \bfS(t_{k})=\bfS_{k}
    \end{cases},\quad k=0,\dots,K-1,
\end{equation}
where the $(k+1)$th problem in the sequence is used to determine the solution at time step $t_{k+1}$, given the sensitivity matrix at the previous time step $t_k$ as initial condition. We will adopt a slight abuse of notation and refer to $\bfS$ as the sensitivity matrix, and similar for the corresponding approximations, although to be precise it is a matrix-valued function.

The matrix of coefficients $\nabla_xf$ and the forcing term $\nabla_pf$ in \eqref{sensitivityODE} both depend on $t, \bfx(t)$, and $\bfp$, and computing the solution $\bfS(t)$ therefore becomes a difficult task in general. We address this problem by developing a new, efficient algorithm for approximating the sensitivity matrix $\bfS$ at times $\{t_k\}_{k=1}^K$.

A comment on the sequence $\{ t_k\} _{k=1} ^K$ of times at which $\bfS$ is to be evaluated is in place. In Section~\ref{subsec:motivation} we briefly discussed the specific application of approximating the sensitivity matrix of an ODE in the context of uncertainty quantification in biological models. There, as in many other application areas, it is natural to consider a sequence of time instants $t_i, i=1,\dots, n_t$, that represent times of measurements of experimental data. However, in experimental settings it is rather common to have measurements only at a small number $n_t$ of times. In order to obtain a good numerical approximation of \eqref{sensitivityODE}, we therefore need a finer collection of time steps. For this reason, even when there are physical time instants to consider, we introduce a finer collection of time steps $\tilde{t}_0,\dots,\tilde{t}_K$, such that it contains all the time steps at which measurements 
are available, i.e.\ $\{t_1,\dots,t_{n_t}\}\subset \{\tilde{t}_0,\dots,\tilde{t}_K\}$; the inclusion is enforced because in this experimental setting the goal is to approximate the sensitivity matrix $\bfS$ at the $n_t$ measurement times. Moreover, we let $\tilde{t}_0$ be equal to the time of the initial condition: $\tilde{t}_0=t_0$. To simplify and to conform to the notation used in \eqref{sensitivityODE}, we drop the tilde from the new time steps $\{\tilde{t}_k\}_{k=0}^K$, which in the following are denoted by $t_0,\dots,t_K$.

\section{Analytical Solutions for the Sensitivity Matrix $\bfS$}
\label{sec:method}
In this section we derive analytical representations for the sensitivity matrix $\bfS(t)\in\mathbb{R}^{n_x\times n_p}$ along the trajectory of the solution $\bfx(t)$ of the original ODE system \eqref{eq:ODE}. To simplify notation, we will here denote $\nabla_x f (t,\bfx, \bfp)$ and $\nabla_p f(t,\bfx, \bfp)$ as time dependent matrices $\bfA(t)\in\mathbb{R}^{n_x\times n_x}$ and $\bfB(t)\in\mathbb{R}^{n_x\times n_p}$, respectively, defined on a closed interval $I\subset\bR$ such that $[t_0;t_K]\subset I$. The system \eqref{sensitivityODE} then takes the form
\begin{equation}
\label{linODE}
    \begin{cases}
    \dot{\bfS}(t) = \bfA(t)\cdot \bfS(t) + \bfB(t),\\
    \bfS(t_{k})=\bfS_{k},
    \end{cases}
\end{equation}
which in general represents a first order inhomogeneous linear differential equation with non-constant coefficient matrix $\bfA(t)$ and forcing term $\bfB(t)$. Such systems are well-understood from a theoretical point of view \cite{BrockettRogerW.2015Fdls, RughWilsonJ1993Lst} and there exist a variety of numerical methods for solving them \cite{HNW93, HW10, GH10, SauerTim2006Na}.
Although efficient numerical methods are readily available for the $n_x-$dimensional system in $\bfx(t)$ \eqref{eq:ODE}, the potentially high-dimensional nature of the associated sensitivity problem \eqref{linODE} renders such methods inefficient in that setting. Indeed, since $\bfS\in\mathbb{R}^{n_x\times n_p}$, the total dimension of the ODE system for the sensitivity is in general much higher than the dimension $n_x$ of \eqref{eq:ODE}, in particular for large values of $n_x$. To remedy this, we approach the problem of approximating $\bfS(t)$ by exploiting some results from the theory for linear ODEs; some are versions of well-known results and we include them here to make the paper self-contained.

In order to derive solutions of \eqref{linODE}, we start by
considering a solution $\bfx (t)$ that is in an equilibrium point,
$\bfx (t) \equiv \bfx_{\teq}$. This leads to a constant matrix of
coefficients $\bfA(t)\equiv \bfA$ and constant forcing term
$\bfB(t)\equiv \bfB$, thus the system \eqref{linODE} takes the form
\begin{equation}
\label{constantCoefficients}
    \begin{cases}
    \dot{\bfS}(t) = \bfA\cdot \bfS(t) + \bfB\,,\\
    \bfS(t_{k})=\bfS_{k}\,.
    \end{cases}
\end{equation}
The solution of this system is obtained using well-known results from ODE theory; we include a proof for completeness.

\begin{theorem}
\label{thm:constantMat}
    Let $\bfA\in\mathbb{R}^{n_x\times n_x}$, $\bfB\in\mathbb{R}^{n_x\times n_p}$ and $\bfS_k\in\mathbb{R}^{n_x\times n_p}$. The solution $\bfS\in \calC^1(I{;}\mathbb{R}^{n_x\times n_p})$ at a time $t\in\bR$ of the first order linear ODE system \eqref{constantCoefficients} is given by
    \begin{equation}
    \label{ssSolution}
    \bfS(t) = e^{(t-t_k)\cdot \bfA}\cdot \Biggl(\bfS_{k} +\int_{t_k}^te^{(t_k-s)\bfA}\bfB ds\Biggr).
\end{equation}
In particular, if $\bfA$ is invertible, \eqref{ssSolution} is equivalent to
 \begin{equation}
      \label{ssSolutionAinv}
      \bfS(t) = e^{(t-t_k)\cdot \bfA}\cdot \Biggl(\bfS_{k} +\biggl(\idI-e^{-(t-t_k)\cdot \bfA}\biggr)\bfA^{-1}\bfB\Biggr).
 \end{equation}
\end{theorem}

\begin{proof}
    We show that the matrix-valued function $\bfS$ defined in \eqref{ssSolution} solves the ODE system \eqref{constantCoefficients}.

    The time derivative $\dot\bfS$ of the function $\bfS$ defined in \eqref{ssSolution} is given by
    \begin{align*}
        \dot \bfS(t) &= \bfA \cdot e^{(t-t_k)\cdot \bfA}\cdot \Biggl(\bfS_{k} +\int_{t_k}^te^{(t_k-s)\bfA}\bfB ds\Biggr) + e^{(t-t_k)\cdot \bfA}\cdot e^{(t_k-t)\bfA}\bfB\\
        &=\bfA\cdot \bfS(t)+\bfB.
    \end{align*}
    Moreover, the value of $\bfS$ at time $t=t_k$ is $\bfS(t_k)=\bfS_k$. This proves that $\bfS$ solves the ODE system \eqref{constantCoefficients}.

    Finally, note that when $\bfA$ is invertible,
    \begin{align*}
     \int_{t_k}^te^{(t_k-s)\bfA}\bfB ds&=\int_{t_k}^te^{(t_k-s)\bfA}\bfA\bfA^{-1}\bfB ds=\int_{t_k}^t\frac{d}{ds}\left(-e^{(t_k-s)\bfA}\right)ds\bfA^{-1}\bfB\\
     &=\left(\idI-e^{-(t-t_k)\bfA}\right)\bfA^{-1}\bfB.
 \end{align*}
    It immediately follows that, when $\bfA$ is invertible, \eqref{ssSolution} is equivalent to \eqref{ssSolutionAinv}.
\end{proof}

From a control-theoretic perspective, the matrix exponential $e^{(t-t_k)\cdot \bfA}$ corresponds to the so-called \textit{state-transition matrix} $\Phi(t{;}t_k)$ in the specific case of a constant matrix of coefficients $\bfA$. In general, we consider the first order homogeneous linear ODE system
\begin{equation}
    \label{homo}
\dot{\bfS}(t)=\bfA(t)\cdot \bfS(t),
\end{equation}
with a time dependent matrix of coefficients $\bfA(t)$, assumed to be continuous on the closed time interval $I$. In this more general setting, we define the state-transition matrix $\Phi(t{;}t_k)$ associated with $\bfA(t)$ as the matrix-valued function that, given any initial condition $\bfS(t_k)=\bfS_k$, allows us to represent the solution of \eqref{homo} at time $t\in I$ as
\begin{equation*}
\bfS(t) = \Phi(t{;}t_k)\cdot \bfS_k.
\end{equation*}
The state-transition matrix $\Phi(t{;}s)$ is a well-studied object; see \cite{RughWilsonJ1993Lst},  \cite[Chap.~1, Sect.~3]{BrockettRogerW.2015Fdls} for a detailed description and properties. In particular, in this paper we make repeated use of the the following proposition, which is a combination of well-known results (see e.g.\ \cite{BrockettRogerW.2015Fdls, RughWilsonJ1993Lst}); we omit the proof.
\begin{proposition}
\label{prop:Phi}
The matrix-valued function $\Phi:I\times I\to\bR^{n_x\times n_x}$ satisfies the following properties for $s,t\in I$:
\begin{enumerate}
    \item\label{itm:first} $\Phi(s{;}s) = \idI,$
    \item\label{itm:second} $\Phi(t{;}s)^{-1} = \Phi(s;t),$
    \item\label{itm:third} for a fixed $s\in I$, the matrix function $\Phi(\cdot{;}s):I\to\bR^{n_x\times n_x}$ satisfies the IVP
    \begin{equation*}
        \begin{cases}
        \frac{d}{dt}\Phi(t{;}s)=\bfA(t)\cdot\Phi(t{;}s),\\
        \Phi(s{;}s)=\idI.
        \end{cases}
    \end{equation*}
\end{enumerate}
\end{proposition}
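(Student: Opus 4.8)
The plan is to first pin down precisely what $\Phi$ is, then read off properties~\ref{itm:first} and~\ref{itm:third} directly from the construction, and finally obtain property~\ref{itm:second} from a composition identity combined with uniqueness of solutions to linear ODEs.

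First I would make the object $\Phi(\cdot\,;s)$ precise. Identifying an $n_x\times n_x$ matrix with a point in $\bR^{n_x^2}$, consider the matrix-valued IVP
\begin{equation*}
\dot{M}(t)=\bfA(t)\cdot M(t),\qquad M(s)=\bfI .
\end{equation*}
Its right-hand side is linear, hence globally Lipschitz, in $M$, with Lipschitz constant $\sup_{t\in I}\|\bfA(t)\|$, which is finite since $\bfA$ is continuous on the compact interval $I$. By the Picard--Lindel\"of theorem the IVP has a unique solution $M\in C^1(I;\bR^{n_x\times n_x})$; call it $\Phi(\cdot\,;s)$. For any $\bfS_k\in\bR^{n_x\times n_p}$ the map $t\mapsto\Phi(t;s)\cdot\bfS_k$ then solves \eqref{homo} with value $\bfS_k$ at $t=s$, and by uniqueness it is the \emph{only} such solution, so this $\Phi$ coincides with the state-transition matrix of the definition. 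Property~\ref{itm:first} is now just the initial condition $M(s)=\bfI$, and property~\ref{itm:third} is the defining IVP itself.

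For property~\ref{itm:second} I would first establish the composition identity $\Phi(t;\tau)\cdot\Phi(\tau;s)=\Phi(t;s)$ for all $s,\tau,t\in I$. Fixing $s$ and $\tau$, the maps $t\mapsto\Phi(t;\tau)\cdot\Phi(\tau;s)$ and $t\mapsto\Phi(t;s)$ both satisfy $\dot{Y}(t)=\bfA(t)\cdot Y(t)$ on $I$ and take the common value $\Phi(\tau;s)$ at $t=\tau$ --- for the second map this uses that $\Phi(\cdot\,;s)$ solves its IVP on all of $I$, in particular at $\tau$ --- so uniqueness forces them to agree. Taking $t=s$ in the identity gives $\Phi(s;\tau)\cdot\Phi(\tau;s)=\Phi(s;s)=\bfI$, and interchanging the roles of $s$ and $\tau$ gives $\Phi(\tau;s)\cdot\Phi(s;\tau)=\bfI$; hence $\Phi(\tau;s)$ is invertible with inverse $\Phi(s;\tau)$, which is property~\ref{itm:second} after relabelling.

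None of these steps is genuinely difficult; the point that needs care is that invertibility of $\Phi(t;s)$ does not follow from the defining ODE alone, and must be obtained either from the composition identity above or, equivalently, from Liouville's formula $\det\Phi(t;s)=\exp\!\bigl(\int_s^t\operatorname{tr}\bfA(u)\,du\bigr)\neq 0$. I would use the composition route, since it produces the explicit inverse at the same time.
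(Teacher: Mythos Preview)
Your argument is correct and is the standard textbook route: construct $\Phi(\cdot\,;s)$ as the unique solution of the matrix IVP via Picard--Lindel\"of, read off items~\ref{itm:first} and~\ref{itm:third}, and derive item~\ref{itm:second} from the composition identity $\Phi(t;\tau)\Phi(\tau;s)=\Phi(t;s)$ obtained by uniqueness. Note, however, that the paper does not actually give a proof of this proposition---it states that the result is a combination of well-known facts, cites Brockett and Rugh, and explicitly omits the proof---so there is nothing to compare against beyond observing that your write-up is essentially the argument one finds in those references.
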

In the case of a constant matrix of coefficients $\bfA$, the exponential matrix $e^{(t-s)\cdot \bfA}$ is the state-transition matrix associated with $\bfS$ \cite[Chap.~1, Sect.~5]{BrockettRogerW.2015Fdls}, and as such it satisfies properties~\ref{itm:first}-\ref{itm:third}.
For time-dependent coefficient and forcing-term matrices $\bfA (t)$ and $\bfB (t)$, we can formulate the solution for the first order linear ODE system \eqref{linODE} in terms of the matrix-valued function $\Phi(\cdot{;}t_k): I \to \bR ^{n_x \times n_p}$ associated with $\bfA(t)$.
\begin{theorem}
\label{thm:generalODESol}
     Let $\bfA(\cdot)\in \calC^0(I{;}\mathbb{R}^{n_x\times n_x})$, $\bfB(\cdot)\in \calC^0(I{;}\mathbb{R}^{n_x\times n_p})$ and $\Phi(\cdot{;}\cdot)\in \calC^1(I\times I{;}\bR^{n_x\times n_x})$ the state-transition matrix associated with $\bfA(t)$. Let $t_k\in I$ and $\bfS(t_k)=\bfS_k\in\mathbb{R}^{n_x\times n_p}$. Then, the solution at time $t\in I$ of the first order inhomogeneous linear ODE system \eqref{linODE} is given by
    \begin{equation}
    \label{solutionODE}
     \bfS(t) = \Phi(t{;}t_{k})\cdot\Biggl(\bfS_{k} + \int_{t_{k}}^{t}\Phi(t_k{;}\tau)\bfB(\tau)d\tau\Biggr).
\end{equation}
\end{theorem}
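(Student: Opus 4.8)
Proof proposal.

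The plan is to verify directly that the right-hand side of \eqref{solutionODE} solves the IVP \eqref{linODE}, and then appeal to uniqueness (an integrating-factor argument gives an equivalent route and is sketched at the end). Denote by $\bfT(t)$ the matrix-valued function on the right-hand side of \eqref{solutionODE}. First I would check the initial condition: evaluating at $t=t_k$, property \ref{itm:first} of Proposition \ref{prop:Phi} gives $\Phi(t_k;t_k)=\bfI$, while $\int_{t_k}^{t_k}\Phi(t_k;\tau)\bfB(\tau)\,d\tau=0$, so $\bfT(t_k)=\bfS_k$ as required.

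Next I would differentiate $\bfT$. Since $\bfB(\cdot)\in C^0(I;\mathbb{R}^{n_x\times n_p})$ and $\Phi(\cdot;\cdot)\in C^1(I\times I;\mathbb{R}^{n_x\times n_x})$, the map $\tau\mapsto\Phi(t_k;\tau)\bfB(\tau)$ is continuous on $I$, so by the fundamental theorem of calculus $t\mapsto\int_{t_k}^{t}\Phi(t_k;\tau)\bfB(\tau)\,d\tau$ is $C^1$ with derivative $\Phi(t_k;t)\bfB(t)$. Applying the product rule to $\bfT(t)=\Phi(t;t_k)\bigl(\bfS_k+\int_{t_k}^{t}\Phi(t_k;\tau)\bfB(\tau)\,d\tau\bigr)$ and using property \ref{itm:third}, i.e.\ $\tfrac{d}{dt}\Phi(t;t_k)=\bfA(t)\Phi(t;t_k)$, I obtain
\begin{align*}
\dot{\bfT}(t) &= \bfA(t)\,\Phi(t;t_k)\Bigl(\bfS_k+\int_{t_k}^{t}\Phi(t_k;\tau)\bfB(\tau)\,d\tau\Bigr) + \Phi(t;t_k)\,\Phi(t_k;t)\,\bfB(t)\\
&= \bfA(t)\,\bfT(t) + \bfB(t),
\end{align*}
where in the last line I used property \ref{itm:second}, $\Phi(t;t_k)\Phi(t_k;t)=\Phi(t;t_k)\Phi(t;t_k)^{-1}=\bfI$, together with the definition of $\bfT$. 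Hence $\bfT\in C^1(I;\mathbb{R}^{n_x\times n_p})$ solves \eqref{linODE}.

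Finally, since $\bfA(\cdot)$ and $\bfB(\cdot)$ are continuous on the compact interval $I$, the right-hand side of \eqref{linODE} is Lipschitz in $\bfS$ uniformly in $t\in I$, so by Picard--Lindel\"of the IVP \eqref{linODE} has a unique $C^1$ solution on $I$; therefore $\bfS(t)=\bfT(t)$, which is \eqref{solutionODE}. An alternative that avoids invoking uniqueness is the integrating-factor argument: set $\bfR(t):=\Phi(t_k;t)\,\bfS(t)$, differentiate using $\tfrac{d}{dt}\Phi(t_k;t)=-\Phi(t_k;t)\bfA(t)$ (obtained by differentiating the identity $\Phi(t_k;t)\Phi(t;t_k)=\bfI$ from property \ref{itm:second} and inserting property \ref{itm:third}), to get $\dot{\bfR}(t)=\Phi(t_k;t)\bfB(t)$, integrate from $t_k$ to $t$ using $\bfR(t_k)=\bfS_k$, and then multiply on the left by $\Phi(t;t_k)$. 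The only mildly delicate points are the justification of differentiating the integral term in the direct approach (resp.\ of differentiating the matrix inverse $\Phi(t_k;t)$ in the alternative); both are standard given the stated regularity, and I expect this bookkeeping to be the main thing to get right, the remainder being routine manipulation with the properties listed in Proposition \ref{prop:Phi}.
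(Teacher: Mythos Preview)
Your proof is correct. Your primary route—verifying that the formula satisfies the ODE and the initial condition and then invoking Picard--Lindel\"of for uniqueness—differs from the paper's, which takes exactly the variation-of-constants route you sketch at the end: it sets $\bfR(t):=\Phi(t;t_k)^{-1}\bfS(t)=\Phi(t_k;t)\bfS(t)$, writes $\bfS(t)=\Phi(t;t_k)\bfR(t)$, differentiates this product using property~\ref{itm:third}, substitutes into \eqref{linODE} to obtain $\dot{\bfR}(t)=\Phi(t_k;t)\bfB(t)$, and integrates. The paper's argument is constructive and avoids the explicit appeal to uniqueness, while your direct-verification approach is arguably cleaner once the formula is already in hand (no need to manipulate the derivative of $\Phi(t_k;t)$ or to cancel the $\bfA(t)\Phi(t;t_k)\bfR(t)$ terms). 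Since you already outline the integrating-factor alternative, you have effectively covered the paper's proof as well.
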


\begin{proof}
We show that the matrix-valued function $\bfS$ defined as \eqref{solutionODE} satisfies the ODE system \eqref{linODE}.

By differentiating $\bfS$ in time and using properties \ref{itm:second} and \ref{itm:third} of Proposition~\ref{prop:Phi} we obtain
\begin{align*}
    \dot\bfS(t)&=\left(\frac{d}{dt}\Phi(t{;}t_{k})\right)\cdot\Biggl(\bfS_{k} + \int_{t_{k}}^{t}\Phi(t_k{;}\tau)\bfB(\tau)d\tau\Biggr)+\Phi(t{;}t_{k})\cdot\Biggl(\Phi(t_k{;}t)\bfB(t)\Biggr)\\
    &=\bfA(t)\cdot \Phi(t{;}t_{k})\cdot\Biggl(\bfS_{k} + \int_{t_{k}}^{t}\Phi(t_k{;}\tau)\bfB(\tau)d\tau\Biggr) + \idI\cdot \bfB(t)\\
    &=\bfA(t)\cdot\bfS+\bfB(t).
\end{align*}
Therefore, the function $\bfS$ in \eqref{solutionODE} satisfies $\dot\bfS(t)=\bfA(t)\cdot\bfS+\bfB(t)$.

Moreover, using Property~\ref{itm:first} of Proposition~\ref{prop:Phi},
\begin{align*}
     \bfS(t_k) = \Phi(t_k{;}t_{k})\cdot\Biggl(\bfS_{k} + \int_{t_{k}}^{t_k}\Phi(t_k{;}\tau)\bfB(\tau)d\tau\Biggr)=\idI\cdot\bfS_k,
\end{align*}
thus $\bfS(t_k)=\bfS_k$. This completes the proof.

\end{proof}

Theorem~\ref{thm:generalODESol} shows that, if the state-transition matrix $\Phi(t{;}t_k)$ can be computed (for all relevant values $t_k, t$), then the solution for the general ODE system \eqref{linODE} can be obtained, which in turn solves the problem \eqref{sensitivityODE} for the sensitivity matrix. An explicit expression for $\Phi(t{;}s)$ is given by a result from control theory: as proved in \cite{PeanoBaker}, the state-transition matrix $\Phi(t{;}s)$ associated with the continuous coefficient matrix $\bfA(t)$ can be expressed in terms of the \textit{Peano-Baker series}:
\begin{equation}
\label{PBS}
    \Phi(t{;}s) = \sum_{n=0}^{\infty}\pbsI_n(t{;}s),
\end{equation}
where the summands are defined recursively as
\begin{align}
    & \pbsI_0(t{;}s)=\idI, \nonumber\\
    & \pbsI_{n+1}(t{;}s) = \int_{s}^t\bfA(\tau)\pbsI_n(\tau{;}s)d\tau. \label{recursiveTerm}
\end{align}

In \cite{PeanoBaker} it is also proved that, assuming $\|\bfA(\cdot)\|$ locally integrable on the closed time interval $I$ containing $s$ and $t$, the series \eqref{PBS} converges compactly on $I$. The proof is carried out by showing that the sequence of partial sums $\left\{\sum_{n=0}^N \pbsI_n(t{;}s)\right\}_{N\ge1}$ is Cauchy on every compact subset $J\subset I$.

\begin{remark}
\label{remarkI_id}
We note that if $t=s$, then the integration interval in \eqref{recursiveTerm} has Lebesgue measure zero, from which it follows that $\pbsI_{n}(s{;}s)=0$ for $n \ge 1$ and $\Phi(s{;}s)=\pbsI_{0}(s{;}s)=\idI$.
\end{remark}

\begin{remark}
As previously mentioned, for a constant matrix of coefficients $\bfA$ the state-transition matrix $\Phi(t{;}s)$ is given by $e^{(t-s)\cdot \bfA}$. In this case, it is possible to move the constant matrix $\bfA$ outside the integral in \eqref{recursiveTerm}. This leads to the recursive definition
\begin{equation*}
  \pbsI_n(t{;}s)=\frac{(t-s)^n}{n!}\cdot \bfA^{n},
\end{equation*}
and thus
\begin{equation*}
\Phi(t{;}s)=\sum_{n=0}^{\infty}\frac{(t-s)^n}{n!}\cdot \bfA^{n} = e^{(t-s)\cdot \bfA}\,.
\end{equation*}

\end{remark}
Consider the special case where the solution $\bfx$ of the underlying ODE \eqref{eq:ODE} has reached an equilibrium point $\bfx _{\teq}$. At equilibrium we have
\begin{equation}
  f(t,\bfx_{\teq},\bfp)=0\,.\label{eq:equilibrium}
\end{equation}
In this case, the gradients $\nabla_xf(t,\bfx,\bfp)$ and $\nabla_p f(t,\bfx,\bfp)$ have constant arguments $(t,\bfx_{\teq},\bfp)$, and thus become constant matrices. To simplify notation, we will omit their arguments and denote these matrices as $\nabla_xf:=\nabla_xf(t,\bfx_{\teq},\bfp)$ and $\nabla_pf:=\nabla_pf(t,\bfx_{\teq},\bfp)$.

Under this assumption on $\bfx$, the ODE system \eqref{sensitivityODE} for $\bfS$ is a first order linear ODE system with constant matrix of coefficients and constant forcing term. The solution is therefore provided by Theorem~\ref{thm:constantMat}, with $\bfA = \nabla _x f$ and $\bfB = \nabla _p f$.
\begin{corollary}
\label{cor:Sequi}
Suppose that the solution $\{ \bfx (t) \} _{t \in I}$ of \eqref{eq:ODE} at time step $t_k$ has reached an equilibrium point $\bfx _{\teq}$. Assume that $\nabla_xf(t,\bfx_{\teq},\bfp)$ is invertible.
The solution of \eqref{sensitivityODE}, at time step $t_{k+1}$ is then given by
\begin{equation}
    \label{solutionAtEquilibrium}
    \bfS(t_{k+1})=e^{\Delta t_k \cdot \nabla_xf}\cdot \biggl(\bfS_k+(\idI-e^{-\Delta t _k\cdot \nabla_xf})\cdot\left(\nabla_xf\right)^{-1}\cdot\nabla_pf\biggr).
\end{equation}
where we denote $\Delta t_k = t_{k+1} - t_k$.
\end{corollary}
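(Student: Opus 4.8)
The plan is to reduce the statement to a direct application of Theorem~\ref{thm:constantMat}. The key observation is that, once the trajectory reaches the equilibrium $\bfx_{eq}$ at time $t_k$, it remains there on $[t_k,t_{k+1}]$: since the input is constant, $\bfu(t)\equiv\bfu$, and $f(\bfx_{eq},\bfu,\bfp)=0$, the constant function $t\mapsto\bfx_{eq}$ solves the IVP \eqref{eq:ODE} on $[t_k,t_{k+1}]$ with initial datum $\bfx(t_k)=\bfx_{eq}$. By the global Lipschitz assumption on $f$ in its first coordinate this solution is unique, so necessarily $\bfx(t)\equiv\bfx_{eq}$ on $[t_k,t_{k+1}]$.

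Consequently, on that interval the coefficient matrix $\nabla_xf(\bfx(t),\bfu(t),\bfp)$ and the forcing term $\nabla_pf(\bfx(t),\bfu(t),\bfp)$ are evaluated at the fixed arguments $(\bfx_{eq},\bfu,\bfp)$, hence equal the constant matrices $\bfA:=\nabla_xf$ and $\bfB:=\nabla_pf$ introduced before the statement. The sensitivity ODE \eqref{sensitivityODE} on $[t_k,t_{k+1}]$, with initial condition $\bfS(t_k)=\bfS_k$, therefore coincides with the constant-coefficient system \eqref{constantCoefficients}.

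I would then invoke Theorem~\ref{thm:constantMat}: its hypotheses are satisfied because $\bfA=\nabla_xf(\bfx_{eq},\bfu,\bfp)$ is invertible by assumption, and $\bfB$, $\bfS_k$ are matrices of the required sizes. Evaluating the representation \eqref{ssSolution} at $t=t_{k+1}$ and writing $\Delta t_k=t_{k+1}-t_k$ gives precisely \eqref{solutionAtEquilibrium}.

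There is essentially no technical obstacle here: the only point that warrants an explicit line of argument is the forward-invariance of the equilibrium under the flow, which is exactly where the global Lipschitz assumption (via uniqueness of solutions) is used; the rest is substitution into Theorem~\ref{thm:constantMat}.
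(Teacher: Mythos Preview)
Your proposal is correct and matches the paper's approach: the paper states the corollary as an immediate consequence of Theorem~\ref{thm:constantMat}, noting (in the paragraph preceding the statement) that at equilibrium the Jacobians $\nabla_xf$ and $\nabla_pf$ become constant matrices, so \eqref{sensitivityODE} reduces to \eqref{constantCoefficients} with $\bfA=\nabla_xf$, $\bfB=\nabla_pf$. Your explicit justification that $\bfx(t)\equiv\bfx_{eq}$ on $[t_k,t_{k+1}]$ via uniqueness is a detail the paper leaves implicit, but otherwise the arguments are identical.
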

In the following we will refer to \eqref{solutionAtEquilibrium} as the \textit{exponential formula}, as it makes use of the exponential matrix for the state-transition matrix.

This exact solution can be used as an approximation for $\bfS(t_{k+1})$ also when $\bfx (t)$ is not at an equilibrium point, with the approximation improving as $\bfx (t)$ moves closer to an equilibrium point.  

In the following section we will show how we can use Theorem~\ref{thm:generalODESol} to design a formula to approximate the sensitivity matrix $\bfS(t)$ in the general case (when the solution $\bfx$ is not at equilibrium).

\section{Numerical Approximations of $\bfS$ based on the Peano-Baker Series}
\label{sec:genSolution}
For the problem of finding the sensitivities $\bfS$ for the solution of the parametrised IVP \eqref{eq:ODE}, Theorem~\ref{thm:generalODESol} gives a general representation of $\bfS$.
Equipped with this representation, and Corollary~\ref{cor:Sequi} for the special case when $\bfx$ has reached an equilibrium point, we are now ready to construct a numerical method for approximating the sensitivity matrix $\bfS$.

In general we cannot use the assumption of the solution of \eqref{eq:ODE} being at equilibrium, as in Section~\ref{sec:method}. That is, in general the matrix of coefficients $\nabla _x f (t,\bfx, \bfp)$ and the forcing term $\nabla _p f(t,\bfx, \bfp)$ are not constant matrices. In fact, in the transient of the dynamical system \eqref{eq:ODE}, there could be drastic variations of these objects. Furthermore, not all systems converge to an equilibrium point (where we can eventually use \eqref{solutionAtEquilibrium} to compute $\bfS(t_{k+1})$). For example, it is not unusual in systems biology to have the solution $\bfx(t)$ of \eqref{eq:ODE} converge, as $t \to \infty$, to a limit cycle. In such cases, the matrices $\nabla _x f (t,\bfx, \bfp)$ and $\nabla _p f (t,\bfx, \bfp)$ are in general never constant. In principle, the trajectory $\bfx(t)$ could also show a chaotic behaviour, although this is rare in biological systems.

As previously mentioned, the $n_x-$dimensional ODE problem \eqref{eq:ODE} can be solved rather efficiently with standard off-the-shelf ODE solvers. We therefore assume to have available approximations $\hat{\bfx} _k$ of $ \bfx(t_k)$ at time steps $t_k,\;k=1,\dots,K$.
For approximating the sensitivity matrix $\bfS$, we want to take advantage of the available $\hat{\bfx}_k$'s and avoid to exploit again an ODE solver for approximating $\bfx(t)$ at intermediate times $t \notin \{t_0, \dots, t_K\}$.
In addition to a solution $\bfx$ of \eqref{eq:ODE}, we assume to have access to the exact expression for the gradients $\nabla_x f(t,\bfx,\bfp)$ and $\nabla_p f(t,\bfx,\bfp)$.

For approximating $\bfS$, we propose a method that is based on approximating the state-transition matrix $\Phi(t{;}s)$ starting from the Peano-Baker series \eqref{PBS}. More specifically, the method can be divided into approximating the integrals in \eqref{solutionODE} and \eqref{recursiveTerm}, for which we can apply numerical integration, and approximating the series \eqref{PBS} itself, for which we use a truncation that is in a sense optimal (explained in more detail below and in Section~\ref{sec:proofErrEst}).

For approximating the integrals, because we know approximations of the values $\bfx_k$ for $k=1, \dots, K$, the endpoints of the integration intervals, we apply the trapezoidal rule.
In each of the recursive integrals $\pbsI_{n}$ ($n\ge1$) in \eqref{recursiveTerm}, the integrands are of the form
\begin{equation}
\label{integrand}
    \nabla_x f(\tau,\bfx,\bfp)\cdot\pbsI_{n-1}(\tau{;}s),
\end{equation}
with $s=t_k$ when we want to compute $\bfS(t_{k+1})$ given $\bfS(t_k)$ (based on \eqref{solutionODE}). The trapezoidal rule requires us to evaluate \eqref{integrand} for $\tau = t_k$ and $\tau=t_{k+1}$. To this end, consider the approximated solutions $\hat \bfx _k$ and $\hat \bfx_{k+1}$ at time steps $t_k$ and $t_{k+1}$, respectively, given by the ODE solver. Inserting these into the gradients results in the approximations
\begin{equation}
\label{Jac_x}
\begin{split}
  \nabla_x\hat{f}_k &:=\nabla_xf(t_k,\hat \bfx _k,\bfp) \approx \nabla_xf(t_k,\bfx,\bfp)\,, \\
  \nabla_x\hat{f}_{k+1} &:=\nabla_xf(t_{k+1},\hat \bfx_{k+1},\bfp)\approx \nabla_xf(t_{k+1},\bfx,\bfp)\,.
\end{split}
\end{equation}

Moreover, we recall that $\pbsI_{0}(t_k{;}t_k)=\idI$ (from Property~\ref{itm:first} of Proposition~\ref{prop:Phi}) and $\pbsI_{n\ge1}(t_k{;}t_k)=0$ (from Remark~\ref{remarkI_id}). Combining these with the trapezoidal rule gives the approximations,
\begin{equation}
  \label{approxIntegrals}
  \begin{split}
    \hat{\pbsI}_{0,k} &:= \idI\,,\\
    \hat{\pbsI}_{1,k} &:= \frac{\Delta t_k}{2}\cdot\bigl(  \nabla_x\hat{f}_k + \nabla_x\hat{f}_{k+1}\bigr)\,,\\
    \hat{\pbsI}_{n+1,k} &:= \frac{\Delta t_k}{2}\cdot\bigl( 0 + \nabla_x\hat{f}_{k+1}\cdot \hat{\pbsI}_{n,k} \bigr),\quad n \ge 2\,,
  \end{split}
\end{equation}
for $\pbsI_0(t_{k+1}{;}t_k)$, $\pbsI_1(t_{k+1}{;}t_k)$ and $\pbsI_{n+1}(t_{k+1}{;}t_k), n \ge 2$, respectively.

A study of the error introduced by the various approximations, presented in detail in Sections~\ref{sec:errEst}-\ref{sec:proofErrEst}, shows that when the trapezoidal rule is used for the integrals in the Peano-Baker series, it is reasonable to truncate the series at $n=2$. That is, for $k=0,1,\dots,K-1$, we approximate $\Phi(t_{k+1}{;}t_k)$ by
\begin{equation}
    \label{PhiApproximation}
\hat{\Phi}(t_{k+1}{;}t_k) := \hat{\pbsI}_{0,k} + \hat{\pbsI}_{1,k} + \hat{\pbsI}_{2,k}.
\end{equation}
In order to treat the integral in \eqref{solutionODE}, where the integrand is
  \begin{equation*}
\Phi(t_k{;}\tau)\cdot\nabla_pf(\tau,\bfx,\bfp)\,,
\end{equation*}
a similar strategy can be used. Similar to \eqref{Jac_x}, we define the approximations of the gradients with respect to $\bfp$ as
\begin{gather}
\label{Jac_p}
    \nabla_p\hat{f}_k:=\nabla_pf(t_k,\hat \bfx_k,\bfp)\approx \nabla_pf(t_k,\bfx,\bfp),\\
    \nabla_p\hat{f}_{k+1}:=\nabla_pf(t_{k+1},\hat \bfx_{k+1},\bfp)\approx \nabla_pf(t_{k+1},\bfx,\bfp).\nonumber
\end{gather}
By the same reasoning as for \eqref{PhiApproximation}, we can truncate the Peano-Baker series at $n=2$ and approximate the terms of the truncated series by
\begin{align*}
    &\pbsI_0(t_{k}{;}t_{k+1}) = \idI = \hat{\pbsI}_{0,k},\\
    &\pbsI_{1}(t_{k}{;}t_{k+1}) \approx \frac{-\Delta t_k}{2}\bigl(  \nabla_x\hat{f}_k + \nabla_x\hat{f}_{k+1}\bigr)=-\hat{\pbsI}_{1,k},\\
    &\pbsI_{2}(t_{k}{;}t_{k+1})\approx \frac{-\Delta t_k}{2}\bigl( 0 + \nabla_x\hat{f}_{k+1}\cdot (-\hat{\pbsI}_{1,k}) \bigr)=\hat{\pbsI}_{2,k}.
\end{align*}
The resulting approximation $\hat \Phi (t_k{;} t_{k+1}) $ of $\Phi (t_k{;} t_{k+1})$ is then defined as
\begin{equation*}\hat{\Phi}(t_k{;}t_{k+1}):=\hat{\pbsI}_{0,k} - \hat{\pbsI}_{1,k} + \hat{\pbsI}_{2,k}.
\end{equation*}
Note that the minus sign in front of the second term is due to the integration interval being reversed compared to \eqref{PhiApproximation}.

Combining the approximation of $\Phi (t_k{;} t_{k+1})$ with the trapezoidal rule, and the property $\Phi(t_k{;}t_k)=\idI$ (Property~\ref{itm:first} of Proposition~\ref{prop:Phi}), the integral in \eqref{solutionODE} can be approximated as
\begin{equation*}
  \int_{t_{k}}^{t_{k+1}}\Phi(t_{k}{;}\tau)\nabla_pf(\tau,\bfx,\bfp)d\tau \approx \frac{\Delta t_k}{2}\cdot\bigl(\nabla_p\hat{f}_k + \hat{\Phi}(t_k{;}t_{k+1})\cdot \nabla_p\hat{f}_{k+1}\bigr).
\end{equation*}
Next, we combine this approximation with $\hat \Phi(t_{k+1}{;}t_k)$ to obtain an approximation of the sensitivity matrix $\bfS$ at time $t_{k+1}$, given $\bfS(t_k)$:
\begin{align}
\label{eq:Approx}
    \bfS(t_{k+1})\approx  \hat{\Phi}(t_{k+1}{;}t_k)\cdot\biggl(\bfS(t_k)+\frac{\Delta t_k}{2}\cdot\bigl(\nabla_p\hat{f}_k + \hat{\Phi}(t_k{;}t_{k+1})\cdot \nabla_p\hat{f}_{k+1}\bigr)\biggr).
\end{align}
The right-hand side of \eqref{eq:Approx} is the approximation, based on the Peano-Baker series, that we use for the solutions $\bfS$ of the ODE problems \eqref{sensitivityODE}. In the remainder of the paper, we will refer to \eqref{eq:Approx} as the \textit{Peano-Baker series (PBS) formula}, as it uses an approximation of the state-transition matrix based on the Peano-Baker series.

We are now ready to introduce the Peano-Baker Series (PBS) algorithm to approximate the sensitivity matrix $\bfS$ of the solution $\bfx$ of the ODE problem \eqref{eq:ODE}. The PBS algorithm, which is described in Algorithm~\ref{alg:approx}, is based on the two formulas to approximate $\bfS$ that we have presented earlier: the \textit{PBS formula} \eqref{eq:Approx} is used in the general case, and the \textit{exponential formula} \eqref{solutionAtEquilibrium} at equilibrium.

\begin{algorithm}
 \caption{Peano-Baker Series (PBS) algorithm for the sensitivity matrix $\bfS$} \label{alg:approx}
\begin{algorithmic}[1]
\Require $\hat{\bfx}_k\approx \bfx(t_k),\quad k=1,\dots,K$
\Ensure $\hat{\bfS}_1,\dots,\hat{\bfS}_K$
\State $\nabla_x\hat{f}_k = \nabla_xf(t_k,\hat{\bfx}_k,\bfp)$
\State $\nabla_p\hat{f}_k = \nabla_pf(t_k,\hat{\bfx}_k,\bfp)$
\State $\hat{\bfS}_0=0\in\mathbb{R}^{n_x\times n_p}$
\For{$k\in\{0,\dots,K-1\}$}
 	\State $\Delta t_k = t_{k+1}-t_k$
  \If{$\nabla_xf\approx $ const \normalfont{\textbf{and}} $\nabla_pf\approx $ const}
  	\State   $\hat{\bfS}_{k+1}=e^{\Delta t_k\cdot\nabla_x\hat{f}_k}\cdot\bigl(\hat{\bfS}_k+\bigl(\mathbb{I}-e^{-\Delta t_k\cdot\nabla_x\hat{f}_k}\bigr)\cdot(\nabla_x\hat{f}_k)^{-1}\cdot\nabla_p\hat{f}_k\bigr)$
\Else
   \State $\hat{\pbsI}_{0,k} = \idI_{n_x}$
   \State $\hat{\pbsI}_{1,k} = \frac{\Delta t_k}{2}\cdot\bigl(  \nabla_x\hat{f}_k + \nabla_x\hat{f}_{k+1}\bigr)$
   \State $\hat{\pbsI}_{2,k} = \frac{\Delta t_k^2}{4}\cdot\bigl(\nabla_x\hat{f}_{k+1}\cdot(  \nabla_x\hat{f}_k + \nabla_x\hat{f}_{k+1}\bigr)\bigr)$
   \State $\hat{\Phi}(t_{k+1}{;}t_k)=\hat{\pbsI}_{0,k}+ \hat{\pbsI}_{1,k}+\hat{\pbsI}_{2,k}$
   \State $\hat{\Phi}(t_{k}{;}t_{k+1})=\hat{\pbsI}_{0,k}- \hat{\pbsI}_{1,k}+\hat{\pbsI}_{2,k}$
   \State $\hat{\bfS}_{k+1} = \hat{\Phi}(t_{k+1}{;}t_k)\cdot\bigl(\hat{\bfS}_k+\frac{\Delta t_k}{2}\cdot\bigl(\nabla_p\hat{f}_k + \hat{\Phi}(t_k{;}t_{k+1})\cdot \nabla_p\hat{f}_{k+1}\bigr)\bigr)$
\EndIf
\EndFor

\end{algorithmic}
\end{algorithm}
In Section~\ref{subsec:implementation} we discuss some criteria for the if-statement in Algorithm~\ref{alg:approx}, i.e. when it is proper to use the \textit{exponential formula} \eqref{solutionAtEquilibrium} 
to approximate $\bfS(t_{k+1})$.

\begin{remark}
    Note that the exact solution \eqref{solutionODE} to the ODE system \eqref{linODE} can be rewritten as
    \begin{align*}
        \bfS(t) = \Phi(t{;}t_{k})\cdot\bfS_{k} + \int_{t_{k}}^{t}\Phi(t{;}\tau)\bfB(\tau)d\tau.
    \end{align*}
    Based on this formula we can define the following alternative approximation for the sensitivity matrix $\bfS$:
    \begin{align}
    \label{alternativeApproximation}
         \bfS(t_{k+1})\approx\hat{\Phi}(t_{k+1}{;}t_k)\cdot{\bfS}(t_k)+\frac{\Delta t_k}{2}\cdot\bigl(\hat{\Phi}(t_{k+1}{;}t_{k})\nabla_p\hat{f}_k +\nabla_p\hat{f}_{k+1}\bigr).
    \end{align}
    A method based on this alternative approximation leads to a global error of the same order as the PBS method (Algorithm~\ref{alg:approx}),  which is based on the PBS formula \eqref{eq:Approx}. We do not investigate further the use of \eqref{alternativeApproximation} for constructing numerical methods in this paper. Rather, we will only consider the PBS formula \eqref{eq:Approx} and, without confusion, ``PBS algorithm'' refers to Algorithm~\ref{alg:approx}.
\end{remark}

\section{Algorithms to Approximate $\bfS$ for Stiff Problems}
\label{sec:stability}

The PBS algorithm presented in Section \ref{sec:genSolution} has the disadvantage that the PBS formula~\eqref{eq:Approx}, which the method relies on, can be unstable in certain regions when applied to stiff problems. Moreover, stiffness of the relevant ODEs is common in the applications that first motivated our interest in methods for computing sensitivities. To overcome this potential stability issue, the PBS approximation should be implemented on small time steps; in general, these time steps should be (much) smaller than the quantities $\Delta t_k=t_{k+1}-t_k$ returned by the ODE solver applied to the system~\eqref{eq:ODE}. With this in mind, we design a stiffness detection mechanism and refine the time steps where the PBS formula~\eqref{eq:Approx} would otherwise be unstable. To distinguish this new method, with the stiffness detection, from the PBS algorithm (Algorithm~\ref{alg:approx}), we refer to it as the \textit{PBS algorithm with refinement (PBSR)}. An alternative way to overcome the issue of stiffness is to never use the PBS formula~\eqref{eq:Approx} and, instead, use the exponential formula~\eqref{solutionAtEquilibrium}, which is unconditionally stable. We refer to the  corresponding algorithm as the \textit{Exponential (Exp) algorithm}.

Because of the stability concerns, the PBSR and the Exp algorithms are the methods that are implemented in practice, rather than the PBS algorithm (Algorithm~\ref{alg:approx}). In the following sections we will give precise definitions of the PBSR and Exp algorithms, along with their properties, and discuss the computational cost associated with them.

\subsection{The PBSR and Exp Algorithms}

The PBS algorithm relies on the the PBS formula~\eqref{eq:Approx}, which is based on a truncation of the Peano-Baker series~\eqref{PBS}. Because of the truncation, the PBS formula~\eqref{eq:Approx} can be unstable. Therefore, in the case of stiff problems, it can only be applied on ``small enough'' time steps.

The approach to approximate the sensitivity matrix $\bfS$ that we propose in this paper consists in first solving the ODE system~\eqref{eq:ODE}, and then approximate the sensitivities given the numerical ODE solution $\{\hat\bfx_k\}$ and the time steps $\{t_k\}$ returned by the ODE solver. However, the time intervals $[t_k,t_{k+1}]$ provided by the ODE solver can be too large for the PBS formula~\eqref{eq:Approx}. A natural idea is then to detect what time intervals $[t_k,t_{k+1}]$ are too large and refine them into $n_{\tint}$ uniformly spaced sub-intervals $\{[t_{k,i},t_{k,i+1}]\}_{i=1}^{n_{\tint}}$ of size $\Delta t_k/n_{\tint}$, with $t_{k,i}=t_k + (i-1)\Delta t_k/n_{\tint}, i=1,\dots,n_\tint+1$, where we can apply the PBS formula~\eqref{eq:Approx}. To implement \eqref{eq:Approx}, we also need an approximation of the state $\bfx$ on the finer time grid. For this purpose we linearly interpolate the  ODE solution $\hat\bfx$ on the refinement; i.e. for $s\in(t_k{;}t_{k+1})$ we approximate $\bfx(s)\approx \hat\bfx_k+(s-t_k)\frac{\hat{\bfx}_{k+1}-\hat{\bfx}_k}{\Delta t_k}$.

The number  $n_{\tint}$ of sub-intervals should be large enough to guarantee that the PBS formula~\eqref{eq:Approx} is properly used, i.e., that the size $\Delta t_k/n_{\tint}$ of the sub-intervals is sufficiently small. In Section \ref{sec:numerical} we describe how we determine a good choice of $n_\tint$ in our numerical examples in order to overcome stiffness.

 Introducing a finer time grid comes with an increase in computational cost. To reduce this additional computational burden, on time intervals $[t_k,t_{k+1}]$ where $n_\tint$ becomes prohibitively large, we avoid the refinement. For such intervals, rather than relying on the PBS formula \eqref{eq:Approx}, which would come with stability issues, we use the exponential formula \eqref{solutionAtEquilibrium}: although in general less accurate than the PBS formula, an advantage of the exponential formula is that it is stable independently of the value of $\Delta t_k$. The use of the exponential formula, as opposed of refining and using \eqref{eq:Approx}, is triggered in two situations, where the second one was already used in Algorithm~\ref{alg:approx}:
\begin{enumerate}
    \item[(i)] if $n_{\tint}$ is ``too large'', and therefore the refinement would lead to a prohibitive computational cost;
    \item[(ii)] if $\nabla_x f$ and $\nabla_p f$ are approximately constant on $[t_k,t_{k+1}]$, and therefore the use of the exponential formula \eqref{solutionAtEquilibrium} is motivated by Corollary~\ref{cor:Sequi}.
\end{enumerate}
The method just described is the PBS algorithm with refinement (PBSR) and its pseudo-code is presented in Algorithm~\ref{alg:PBSR}. 
In Section~\ref{sec:numerical} we describe the definitions of the conditions (i), that is the stiffness detection mechanism, and (ii) that are implemented in our numerical experiments; see specifically Section \ref{subsec:implementation}. This mechanism allows us to overcome the problem of stiffness, making the method robust. However, the exact definitions and implementation of the two conditions can vary, e.g., depending on what applications one has in mind or how one values speed (i.e., computational cost) vs.\ accuracy.

\begin{algorithm}
 \caption{Peano-Baker Series algorithm with Refinement (PBSR) for the sensitivity matrix $\bfS$ \label{alg:PBSR}}
\begin{algorithmic}[1]
\Require $\hat{\bfx}_k\approx \bfx(t_k),\quad k=1,\dots,K$
\Ensure $\hat{\bfS}_1,\dots,\hat{\bfS}_K$
\State $\nabla_x\hat{f}_k = \nabla_xf(t_k,\hat{\bfx}_k,\bfp)$
\State $\nabla_p\hat{f}_k = \nabla_pf(t_k,\hat{\bfx}_k,\bfp)$
\State $\hat{\bfS}_0=0\in\mathbb{R}^{n_x\times n_p}$
 \For{$k\in\{ 0,\dots,K-1\}$}
 	\State $\Delta t_k = t_{k+1}-t_k$
 \State estimate $n_\tint$
  \If{ $n_{\tint}$ ``too large'' \quad \normalfont{\textbf{or}} \quad $\nabla_xf\approx $ const \normalfont{\textbf{and}} $\nabla_pf\approx $ const }
		\State $\hat{\bfS}_{k+1}=e^{\Delta t_k\cdot\nabla_x\hat{f}_k}\cdot\bigl(\hat{\bfS}_k+\bigl(I-e^{-\Delta t_k\cdot\nabla_x\hat{f}_k}\bigr)\cdot\nabla_x\hat{f}_k^{-1}\cdot\nabla_p\hat{f}_k\bigr)$
	\Else
		\State $\hat{\bfS}_{\text{temp}} = \hat{\bfS}_{k}$
		\For{$h\in\{0,\dots,n_{\tint}-1\}$}
			\State $t_a = t_k + \frac{h}{n_{\tint}}(t_{k+1}-t_k)$
			\State $t_b = t_k + \frac{h+1}{n_{\tint}}(t_{k+1}-t_k)$
			\State $\Delta t = t_b - t_a$
			\State $\hat{\bfx}_a = \hat{\bfx}_k + \frac{h}{n_{\tint}}(\hat{\bfx}_{k+1}-\hat{\bfx}_k)$
			\State $\hat{\bfx}_b = \hat{\bfx}_k + \frac{h+1}{n_{\tint}}(\hat{\bfx}_{k+1}-\hat{\bfx}_k)$
			\State $\nabla_x\hat{f}_a = \nabla_xf(t_a,\hat{\bfx}_a,\bfp)$
			\State $\nabla_x\hat{f}_b = \nabla_pf(t_b,\hat{\bfx}_b,\bfp)$
			\State $\nabla_p\hat{f}_a = \nabla_pf(t_a,\hat{\bfx}_a,\bfp)$
			\State $\nabla_p\hat{f}_b = \nabla_pf(t_b,\hat{\bfx}_b,\bfp)$
			\State $\hat{\pbsI}_{1} = \frac{\Delta t}{2}\cdot\bigl(  \nabla_x\hat{f}_a + \nabla_x\hat{f}_{b}\bigr)$
			\State $\hat{\pbsI}_{2} = \frac{\Delta t^2}{4}\cdot\bigl(\nabla_x\hat{f}_{b}\cdot(  \nabla_x\hat{f}_a + \nabla_x\hat{f}_{b}\bigr)\bigr)$
			\State $\hat{\Phi}(t_b{;}t_a)=I+ \hat{\pbsI}_{1}+\hat{\pbsI}_{2}$
			\State $\hat{\Phi}(t_{a}{;}t_{b})=I- \hat{\pbsI}_{1}+\hat{\pbsI}_{2}$
			\State $\hat{\bfS}_{\text{temp}} = \hat{\Phi}(t_{b}{;}t_a)\cdot\bigl(\hat{\bfS}_{\text{temp}}+\frac{\Delta t}{2}\cdot\bigl(\nabla_p\hat{f}_a + \hat{\Phi}(t_a{;}t_b)\cdot \nabla_p\hat{f}_b\bigr)\bigr)$
		\EndFor
		\State $\hat{\bfS}_{k+1} = \hat{\bfS}_{\text{temp}}$
	\EndIf
 \EndFor
 \end{algorithmic}
\end{algorithm}

While the PBS formula~\eqref{eq:Approx} must be implemented on small time steps when the problem is stiff, the exponential formula~\eqref{solutionAtEquilibrium} is stable regardless of the size of the time intervals $[t_k,t_{k+1}]$. Thus, another way of handling the problem of stiffness consists in using the exponential formula \eqref{solutionAtEquilibrium} at each time step, without refining the time intervals. This corresponds to treating the Jacobians \textit{as if} they were constant on each time interval $[t_k,t_{k+1}]$, and therefore, by Corollary~\ref{cor:Sequi}, we can express the (approximate) solution for the sensitivity matrix using the exponential formula \eqref{solutionAtEquilibrium}. The corresponding method is the Exp algorithm~\cite{Kramer2016}, whose pseudo-code is displayed in Algorithm~\ref{alg:ExpAlg}.

\begin{algorithm}
 \caption{Exponential (Exp) algorithm for the sensitivity matrix $\bfS$ \label{alg:ExpAlg}}
 \begin{algorithmic}
 \Require $\hat{\bfx}_k\approx \bfx(t_k),\quad k=1,\dots,K$
\Ensure $\hat{\bfS}_1,\dots,\hat{\bfS}_K$
\State $\nabla_x\hat{f}_k = \nabla_xf(t_k,\hat{\bfx}_k,\bfp),$ \quad $\nabla_p\hat{f}_k = \nabla_pf(t_k,\hat{\bfx}_k,\bfp)$
\State $\hat{\bfS}_0=0\in\mathbb{R}^{n_x\times n_p}$
 \For{$k\in\{0,\dots,K-1\}$}
 	\State $\Delta t_k = t_{k+1}-t_k$
	\State $\hat{\bfS}_{k+1}=e^{\Delta t_k\cdot\nabla_x\hat{f}_k}\cdot\bigl(\hat{\bfS}_k+\bigl(I-e^{-\Delta t_k\cdot\nabla_x\hat{f}_k}\bigr)\cdot\nabla_x\hat{f}_k^{-1}\cdot\nabla_p\hat{f}_k\bigr)$
\EndFor
 \end{algorithmic}
\end{algorithm}

Note that in the case of severe stiffness, the PBSR algorithm can trigger the use of the exponential formula \eqref{solutionAtEquilibrium} in most of the time intervals; consequently, in such cases the PBSR algorithm is equivalent to the Exp algorithm.

\subsection{Computational cost and relative accuracy}

The PBSR and Exp algorithms are, along with the FS method, the methods we implement in our numerical experiments (see Section~\ref{sec:numerical}). To complement this numerical study, it is of interest to analyze the computational cost of the three algorithms, and compare them in terms of accuracy.

The FS method is the most expensive of the three algorithms. In particular, the larger the dimension $n_p$ of the parameter space is, the less efficient the FS method becomes when compared to the other two. The reason is that the FS method is based on solving an ODE system of dimension $n_x\times (n_p+1)$, because it couples the ODE for the state $\bfx$ (of dimension $n_x$) with the ODE for the sensitivity matrix $\bfS$ (of dimension $n_x\times n_p$). In contrast, the PBSR and Exp methods only require solving the (smaller) $n_x$-dimensional ODE system for $\bfx$ (which in general requires a coarser time grid compared to the ODE system in the FS method), as well as performing some additional operations on each of the time intervals $[t_k,t_{k+1}]$ that are returned by the ODE solver. 
The cost of these additional operations along with solving the $n_x$-dimensional ODE system does not outweigh the cost of solving the $n_x\times (n_p+1)$-dimensional ODE system in the FS method; the larger $n_p$ is, the more expensive FS becomes, when compared to the PBSR and Exp methods.

To better understand, and compare, the cost of the PBSR and Exp methods, we now outline the operations used by the two methods over each ODE time interval $[t_k,t_{k+1}]$; we omit cheaper computations, such as scalar multiplication and matrix summation.
\vspace{0.1cm}

\noindent    \textbf{Exp algorithm.} In this method the exponential formula \eqref{solutionAtEquilibrium} is applied on each time interval. The (expensive) operations associated with \eqref{solutionAtEquilibrium} are:
    \begin{itemize}
        \item[-] $1$ matrix-matrix multiplication (dimension $n_x\times n_x$),
        \item[-]  $1$ matrix exponentiation (dimension $n_x\times n_x$),
        \item[-] solving $1$ linear system $\nabla_xf \cdot v= \nabla_pf$, where the dimension of $\nabla_xf$ is $n_x\times n_x$, and the dimension of $\nabla_pf$ is $n_x\times n_p$ (1 QR decomposition + $n_p$ forward substitution sequences; or 1 LU decomposition + $n_p$  substitution sequences).
    \end{itemize}
\vspace{0.1cm}

\noindent   \textbf{PBSR algorithm.} The cost of the PBSR method for each time interval depends on whether or not the current time interval is refined into finer sub-intervals:
\begin{itemize}
    \item[(i)] if the refinement is applied, the (expensive) operations are $3\cdot n_\tint$ matrix-matrix multiplications (dimension $n_x\times n_x$), because the time interval is refined into $n_\tint$ sub-intervals and the PBS formula \eqref{eq:Approx} consists of 3 matrix-matrix multiplications;  
    \item[(ii)] if the exponential formula \eqref{solutionAtEquilibrium} is applied (and thus no refinement), the cost is the same as for the Exp algorithm (described above). 
\end{itemize}
The outline of the (expensive) operations used for each method suggests that, in general, because of the refinement, the PBSR algorithm requires a larger number of operations, and therefore we expect the Exp algorithm to be less computationally expensive. 
This will also be confirmed in the numerical experiments in Section~\ref{sec:numerical}. 

The advantage of the PBSR over the Exp algorithm is the higher accuracy it provides. The reasons for this higher accuracy are (i) the refinement performed in the PBSR algorithm, and (ii) the higher accuracy of the PBS formula~\ref{eq:Approx} (used in the PBSR algorithm) over the exponential formula~\ref{solutionAtEquilibrium}: when the system is not at equilibrium and $\Delta t_k$ is sufficiently small, the PBS formula \eqref{eq:Approx} is more accurate than \eqref{solutionAtEquilibrium}, as it takes into account the changes in $\nabla_x f$ and $\nabla_p f$ in the interval $[t_k,t_{k+1}]$, whereas only the values of the Jacobians at time $t_k$ enter into the exponential formula \eqref{solutionAtEquilibrium}. 

Note that when the system reaches a steady state, the Jacobians $\nabla_xf$ and $\nabla_pf$ are (approximately) constant, and the error in the sensitivity matrix approximated with the Exp algorithm is of the same order as the PBSR approximation. This behaviour can also be observed in the numerical example in Section~\ref{sec:numerical} where we approximate the sensitivity matrix in the model for the CaMKII signalling pathway, which reaches convergence in the time span considered (see Figure~\ref{fig:CaMKII}). If one is interested in the approximation of the sensitivities \textit{only} at steady state, then the Exp algorithm is to be preferred, as it is cheaper and provides similar approximation to the PBSR algorithm (at convergence). However, in our applications (see Section~\ref{subsec:motivation}) we need to compute the Fisher information~\eqref{eq:F}, which requires the sensitivities at all time-points $\{t_l\}$ at which the state of the system is measured. These time-points are typically in the transient phase. Thus for our applications, and in general for cases where a good approximation of the sensitivity matrix is required in the transient phase, the PBSR algorithm is more suitable than the Exp algorithm, despite the slight increase in computational time.

We end this section with a brief discussion of another possible modification of the algorithms presented so far: an algorithm that performs the same refinement as the PBSR and applies the exponential formula \eqref{solutionAtEquilibrium} on each sub-interval. Although a natural idea to consider, we argue that it is not worthwhile to implement such an algorithm. In fact, because the PBS formula~\eqref{eq:Approx} is more accurate than the exponential formula~\eqref{solutionAtEquilibrium}, such a method becomes less accurate than the PBSR algorithm. Moreover, the three (expensive) operations in the exponential formula \eqref{solutionAtEquilibrium} (1 matrix-matrix multiplication, 1 matrix exponentiation, solving 1 linear system) are much more expensive than those in the PBS formula \eqref{eq:Approx} (3 matrix-matrix multiplications), as can be seen from Table~\ref{tab:costs}, where we compare the cost for the different operations. We do this in two situations corresponding to two of the models used in Section \ref{sec:numerical}. The conclusion is that  combining refinement of the time intervals with (only) the exponential formula comes with a higher computational cost, but no improvement in accuracy when compared to the PBSR.

\begin{table}\centering
  \caption{Relative numerical costs (calculation time ratio) between different
    operations applied to matrices of sizes $11\times 11$ and $21\times 21$ corresponding to Jacobians $\nabla_xf$ arising from the two models from systems biology described in Section~\ref{sec:numerical} (PKA and CaMKII, respectively) listed in the first
    column. Operations: double precision general matrix-matrix
    multiplication (\texttt{dgemm}) (used as reference for each of the two models), matrix exponentiation via the \emph{scale and square} algorithm (\texttt{expm}), solve
    linear system in place (\texttt{svx}), LU decomposition
    (\texttt{LU}). These numerical results are obtained from a C
    implementation using the GNU Scientific Library. The standard error is indicated in parentheses (concise error notation). 
    \label{tab:costs}}
  \begin{tabular}{cccccc}
    Model &Matrix dimension & \texttt{dgemm}&\texttt{expm}&\texttt{svx}&\texttt{LU}\\
    \midrule 
    PKA &11 $\times$ 11 & 1 & \num{5.96 +- 0.31}  & \num{4.79 +- 0.27} & \num{0.74 +- 0.03} \\
    CaMKII &21 $\times$ 21 &1& \num{8.46 +- 0.99} & \num{3.22 +- 0.79} & \num{0.48 +- 0.07} \\
  \end{tabular}
\end{table}

\section{Error Analysis of the Approximations of $\bfS$}
\label{sec:errEst}

In this section we present the result of an error analysis for the approximation of the sensitivity matrix $\bfS$ obtained by applying the PBS formula~\eqref{eq:Approx} to the iteratively defined ODE systems~\eqref{sensitivityODE}, as $\max_k\Delta t_k\to 0$ (or equivalently, as $K\to\infty$). 

The errors involved are represented as vectors or matrices that correspond to the difference between an exact term and its approximation. In order to provide error estimates, and conclude on the order of convergence, we will consider the norm $\|\cdot\|$ of the errors. Since we are working in finite-dimensional vector spaces, all norms are equivalent and the exact choice is irrelevant to the analysis. Recall also from Section~\ref{sec:problem} that we have assumed enough regularity of the problem so that  existence and uniqueness of a solution of \eqref{eq:ODE} is guaranteed.

To simplify the notation, we define $\pbsJ_k$ as
\begin{equation*}
    \pbsJ_k:=\int_{t_k}^{t_{k+1}}\Phi(t_k{;}\tau)\cdot \nabla_pf(\tau,\bfx,\bfp)d\tau,
\end{equation*}
and the corresponding approximation
\begin{equation*}
\hat{\pbsJ}_k:=\frac{\Delta t_k}{2}\cdot(\nabla_p\hat{f}_k+\hat{\Phi}(t_{k}{;}t_{k+1})\cdot\nabla_p\hat{f}_{k+1}).
\end{equation*}

Having established the notation, we can use Theorem~\ref{thm:generalODESol} to formulate the exact solution of the ODE system \eqref{sensitivityODE} for the sensitivity matrix at time step $t_{k+1}$ as
\begin{equation}
    \label{exactSensitivity}
    \bfS(t_{k+1}) = \Phi(t_{k+1}{;}t_k)\cdot\left(\bfS(t_k)+\pbsJ_k\right),
\end{equation}
while the approximate solution is defined as
\begin{equation}
\label{approximateSensitivity}
\hat{\bfS}_{k+1} = \hat{\Phi}(t_{k+1}{;}t_k)\cdot\left(\hat{\bfS}_k+\hat{\pbsJ}_k\right).
\end{equation}
Figure~\ref{fig:errorPropagation} illustrates the terms and the corresponding errors in the approximation \eqref{approximateSensitivity} in a graph. Each node in the graph represents a new source of error and the directed edges show the propagation of error from one node (an approximation) to another, as a result of the approximation being used instead of the exact quantity.

\begin{figure}
\centering
 \includegraphics[width=\textwidth]{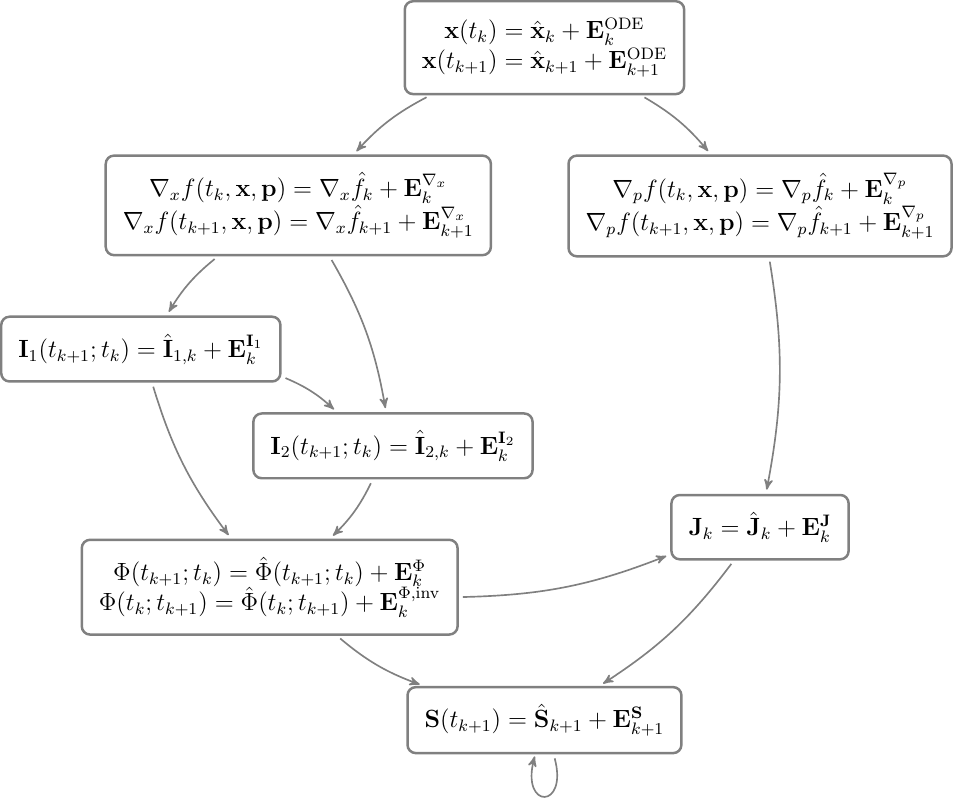}
\caption{Error propagation graph.}
\label{fig:errorPropagation}
\end{figure}

The main result of this section, Theorem~\ref{thm:error}, is a characterisation of the error in the approximation of $\bfS$, at time step $k+1$, in terms of the largest time step $\Delta t _{\tmax}$ used by the underlying ODE solver. Before we can state this result, we list the assumptions we make on the functions involved. The first assumption concerns the numerical solution $\hat{\bfx}_k$, at each time step $t_k$, of the ODE \eqref{eq:ODE},
\begin{equation*}
    \err^{\tODE}_{k} = \bfx(t_k)-\hat{\bfx}_k,
\end{equation*}
 which is defined as an $n_x-$dimensional error vector. We make the following assumption on the global error of the underlying numerical method.

\begin{assumption}
\label{as:errorODE}
The ODE solution of \eqref{eq:ODE} has a global error of at least second order, i.e. for all $k=0,\dots,K-1$, \ $\left\|\err^{\tODE}_{k}\right\|=\mathcal{O}(\Delta t_{\tmax}^2)$.
\end{assumption}
Next, for partial derivatives of $f$ with respect to $\bfx$ and $\bfp$ we use the notation,
\begin{align*}
    \left(H_{xx}\right)^{jh}_l &:=\frac{\partial^2 f_l}{\partial x_h\partial x_j}, \\
    \left(H_{xp}\right)^{ih}_l &:=\frac{\partial^2 f_l}{\partial x_h\partial p_i}.
\end{align*}
\begin{assumption}
\label{as:boundedH}
    The third order tensors $H_{xx}(t,\bfx,\bfp)$ and $H_{xp} (t,\bfx,\bfp)$ of second partial derivatives with respect to $\bfx$ and $\bfp$ are both bounded with respect to the supremum norm $\| \cdot \|_{[0,T]}$.
\end{assumption}
The fourth assumption concerns boundedness of certain maps with respect to $\| \cdot \| _{[0,T]}$.
\begin{assumption}
\label{as:time}
The following maps are bounded with respect to $\| \cdot \| _{[0,T]}$:
\begin{enumerate}[label=(\ref{as:time}.\arabic*)]
    \item\label{as:time1} $t \mapsto \nabla _x f(t,\bfx,\bfp)$,
    \item\label{as:time1p} $t \mapsto \nabla _p f(t,\bfx,\bfp)$,
    \item\label{as:time2} $t\mapsto\frac{d^2}{dt^2}\nabla_xf(t,\bfx,\bfp)$,
    \item\label{as:time3} $t\mapsto\frac{d^2}{dt^2}\left(\nabla_xf(t,\bfx,\bfp)\pbsI_1(t{;}t_k)\right)$,
    \item\label{as:time4} $t\mapsto\frac{d^2}{dt^2}\left(\Phi(t_k{;}t)\cdot\nabla_pf(t,\bfx,\bfp)\right)$.
\end{enumerate}
\end{assumption}
Assumption~\ref{as:time1} is used in the study of the remainder term of the Peano-Baker series, after truncation at $n=2$, and Assumption~\ref{as:time1p} in the proof of the error in the terms $\mathbf{J}_k$. The remaining assumptions~\ref{as:time2}–\ref{as:time4} are used for arguments involving the trapezoidal rule, which explains why the second derivative with respect to time appears.

The final assumption concerns the sensitivity matrix $\bfS$ and the integrands in the $\pbsJ _k$s.
\begin{assumption}
\label{as:matrices}
There exist finite, positive constants $C_\bfS$ and $C_{\pbsJ}$, such that $C_\bfS \geq \| \bfS (t) \|$ for all $t \in [0,T]$, and $C_{\pbsJ}:=\max_{k=0,\dots,K-1}\|\Phi(t_k;t)\cdot\nabla_pf(t,\bfx,\bfp)\|_{[0,T]}$.
\end{assumption}

The assumptions are stated not with the aim of full generality, but rather to provide enough regularity for the overall results to not get obscured by technical details. It is clear that the assumptions can be made both more explicit and less restrictive if aimed at specific examples. As an example, the assumptions of Lemma~\ref{lemma:errI1} are satisfied if, for example, $f\in\calC^3(\bR,\bR^{n_x},\bR^{n_p})$. Similarly, if the vector field $f$ in the ODE \eqref{eq:ODE} is not time-dependent, more explicit error estimates can be obtained. We leave such refinements of the results for future work considering more specific formulations of the underlying IVP \eqref{eq:ODE}.

We are now ready to state the main theorem on the error of the proposed approximation; the proof is carried out in Section \ref{sec:proofErrEst}.
\begin{theorem}
\label{thm:error}
Suppose Assumptions~\ref{as:errorODE}–\ref{as:matrices} hold. Given a discretization $0=t_0<t_1<\dots<t_{K-1}<t_K=T$ of the time interval $[0,T]$, let $\Delta t_{\tmax}:=\max_{h=0,\dots,K-1}\Delta t_h$ and $\Delta t_{\tmin}:=\min_{h=0,\dots,K-1}\Delta t_h$, and assume that there exists a finite constant $\Xi$ such that $\frac{\Delta t_{\tmax}}{\Delta t_{\tmin}}\le \Xi$, as $\Delta t_{\tmax}\to 0$. Then, the error $\err_{k+1}^{\bfS}$ in the approximation of the sensitivity matrix $\bfS$ at time step $t_{k+1}$ obtained by applying the PBS formula \eqref{eq:Approx} to the ODE systems~\eqref{sensitivityODE} satisfies
\begin{equation}
    \label{errSOrder}
    \left\|\err_{k+1}^{\bfS}\right\|=\mathcal{O}(\Delta t_{\tmax}^2),
\end{equation}
as $\Delta t_{\tmax}\to 0$.
\end{theorem}

Theorem \ref{thm:error} covers the error associated with the PBS algorithm. It turns out that, if the map $t\mapsto \frac{d}{d t} f(t,\bfx,\bfp)$ is bounded with respect to $\| \cdot \| _{[0,T]}$, the result holds also when a refinement of the time intervals $\Delta t_{k}$ is used, and the PBS formula \eqref{eq:Approx} is applied on the sub-intervals, as in the PBSR algorithm (Algorithm~\ref{alg:PBSR}). This is formalised in the following corollary, the proof of which is also provided in Section \ref{sec:proofErrEst}.

\begin{corollary}
    \label{cor:errorPBSR}
Suppose Assumptions~\ref{as:errorODE}–\ref{as:matrices} hold and the map $t\mapsto \frac{d}{d t} f(t,\bfx,\bfp)$ is bounded with respect to $\| \cdot \| _{[0,T]}$. Given a discretization $0=t_0<t_1<\dots<t_{K-1}<t_K=T$ of the time interval $[0,T]$, let $\Delta t_{\tmax}:=\max_{h=0,\dots,K-1}\Delta t_h$ and $\Delta t_{\tmin}:=\min_{h=0,\dots,K-1}\Delta t_h$, and assume that there exists a finite constant $\Xi$ such that $\frac{\Delta t_{\tmax}}{\Delta t_{\tmin}}\le \Xi$, as $\Delta t_{\tmax}\to 0$. Consider any refinement of the time intervals $[t_h,t_{h+1}]$ and linearly interpolate the numerical solution $\hat\bfx$ of the ODE system~\eqref{eq:ODE} on each sub-interval. Then, the error $\err_{k+1}^{\bfS}$ in the approximation of the sensitivity matrix $\bfS$ at time step $t_{k+1}$ obtained by applying the PBS formula \eqref{eq:Approx} to the ODE systems~\eqref{sensitivityODE} defined on the refined time grid satisfies
\begin{equation}
    \label{errSOrder}
    \left\|\err_{k+1}^{\bfS}\right\|=\mathcal{O}(\Delta t_{\tmax}^2),
\end{equation}
as $\Delta t_{\tmax}\to 0$.
\end{corollary}

\section{Proof of the Error Estimates for the Approximations of $\bfS$}
\label{sec:proofErrEst}
In the previous section we stated Theorem~\ref{thm:error} and Corollary~\ref{cor:errorPBSR} about the error estimates for the approximations of $\bfS$ using the PBS formula \eqref{eq:Approx} on the ODE systems \eqref{sensitivityODE} (without and with refinement of the time intervals, respectively). Here we provide the associated proofs, starting from Theorem~\ref{thm:error}, for which we will use a series of lemmas, corresponding roughly to the directed edges in Figure~\ref{fig:errorPropagation}. The first source of error comes from the numerical solution $\hat{\bfx}_k$, at each time step $t_k$, of the ODE \eqref{eq:ODE}, represented by the uppermost node in Figure~\ref{fig:errorPropagation}. Under Assumption~\ref{as:errorODE}, the error $\err^{\tODE}_{k}$ associated with $\hat \bfx _k$ is $\mathcal{O}(\Delta t^2_\tmax)$.

Next, we consider the approximations of the Jacobians, defined in in \eqref{Jac_x} and \eqref{Jac_p}. As shown by the two directed edges going from the uppermost node in Figure~\ref{fig:errorPropagation}, the error $\err_k^{\tODE}$ propagates to both these terms: the errors that arise in the Jacobians are due to the evaluation of the exact Jacobians $\nabla_x f$ and $\nabla_p f$ in the approximated solution $\hat{\bfx}_k$ instead of the exact counterpart $\bfx(t_k)$:
\begin{align*}
    \err_k^{\nabla_x} = \nabla_{x}f(t_k,\bfx,\bfp)-\nabla_x\hat{f}_k,\\
    \err_k^{\nabla_p} = \nabla_{p}f(t_k,\bfx,\bfp)-\nabla_p\hat{f}_k.
\end{align*}
The following result is standard and included for completeness.

\begin{lemma}
\label{lemma:Jacobians}
    Under Assumptions~\ref{as:errorODE} and \ref{as:boundedH}, the errors $\err_k ^{\nabla _x}$ and $\err _k ^{\nabla _p}$ are both of second order in $\Delta t_\tmax$, i.e. 
    \begin{align*}
        \left\|\err_k ^{\nabla _x}\right\|=\mathcal{O}(\Delta t_{\tmax}^2) \qquad \text{and}\qquad \left\|\err_k ^{\nabla _p}\right\|=\mathcal{O}(\Delta t_{\tmax}^2),
    \end{align*}
    as $\Delta t_{\tmax}\to0$.
\end{lemma}
\begin{proof}
The errors $\err_k ^{\nabla _x}$ and $\err _k ^{\nabla _p}$ are matrices of dimension $n_x \times n_x$ and $n_x \times n_p$, respectively. For an arbitrary $h\in\{1,\dots,n_x\}$, let $\left(\err_{k}^{\tODE}\right)_h$ denote the $h$th component of the $n_x-$dimensional error vector $\err_k^{\tODE}$. The entries of $\err_k ^{\nabla _x}$ and $\err _k ^{\nabla _p}$ are then given by
\begin{align*}
    \left(\err_k^{\nabla_x}\right)_{l}^{~j} &=\frac{\partial f_{l}(t_k,\bfx,\bfp)}{\partial x_j} - \frac{\partial f_{l}(\hat{\bfx}_k)}{\partial x_j}\\
    &=\frac{\partial^2f_{l}(t_k,\bfx,\bfp)}{\partial x_h\partial x_j}\cdot \left(\bfx(t_k)-\hat{\bfx}_{k}\right)_h+\mathcal{O}\left(\left\|\err_{k}^{\tODE}\right\|^2\right)\\
    &=\left(H_{xx}(t,\bfx,\bfp)\right)^{~jh}_l\left(\err_{k}^{\tODE}\right)_h+\mathcal{O}\left(\left\|\err_{k}^{\tODE}\right\|^2\right),
\end{align*}
and
\begin{align*}
    \left(\err_k^{\nabla_p}\right)_{l}^{~i} &= \frac{\partial f_{l}(t_k,\bfx,\bfp)}{\partial p_i} - \frac{\partial f_{l}(\hat{\bfx}_k)}{\partial p_i}\\
    &=\frac{\partial^2f_{l}(t_k,\bfx,\bfp)}{\partial x_h\partial p_i}\cdot \left(\bfx(t_k)-\hat{\bfx}_{k}\right)_h+\mathcal{O}\left(\left\|\err_{k}^{\tODE}\right\|^2\right)\\
    &=\left(H_{xp}(t,\bfx,\bfp)\right)^{~ih}_l\left(\err_{k}^{\tODE}\right)_h+\mathcal{O}\left(\left\|\err_{k}^{\tODE}\right\|^2\right).
\end{align*}
Given these forms for the entries, we obtain the following bounds,
\begin{align*}
    \left\|\err_k^{\nabla_x}\right\| &\le C_1 \left\|H_{xx}(t_k,\bfx,\bfp)\right\|\left\|\err_k^{\tODE}\right\|+\mathcal{O}\left(\left\|\err_k^{\tODE}\right\|^2\right),\\
    \left\|\err_k^{\nabla_p}\right\| &\le C_2 \left\|H_{xp}(t_k,\bfx,\bfp)\right\|\left\|\err_k^{\tODE}\right\|+\mathcal{O}\left(\left\|\err_k^{\tODE}\right\|^2\right),
\end{align*}
where the constants $C_1>0$ and $C_2>0$ depend on the specific dimensions $n_x$ and $n_p$ considered and on the choice of norms.

Under Assumption~\ref{as:boundedH} on $H_{xx}(t,\bfx,\bfp)$ and $H_{xp} (t,\bfx,\bfp)$, there exist finite constants $C_{H_{xx}}:=\left\|H_{xx}\right\|_{[0,T]}$ and $C_{H_{xp}}:=\left\|H_{xp}\right\|_{[0,T]}$. By the definition of the supremum norm, we obtain uniform (with respect to time) bounds for the errors in the Jacobians:
\begin{align*}
    \left\|\err_k^{\nabla_x}\right\| &\le C_1C_{H_{xx}}\left\|\err_k^{\tODE}\right\|+\mathcal{O}\left(\left\|\err_k^{\tODE}\right\|^2\right),\\
    \left\|\err_k^{\nabla_p}\right\| &\le  C_2C_{H_{xp}}\left\|\err_k^{\tODE}\right\|+\mathcal{O}\left(\left\|\err_k^{\tODE}\right\|^2\right).
\end{align*}
This shows that the norm of either error is of the same order as the error in the ODE solution, which by Assumption~\ref{as:errorODE} is $\mathcal{O}(\Delta t_\tmax^2)$ as $\Delta t_\tmax\to0$.
\end{proof}

Following the error propagation in Figure~\ref{fig:errorPropagation}, the approximations of $\nabla_xf(t_k,\bfx,\bfp)$ as $\nabla_x\hat{f}_k$, and its counterpart at time step $k+1$, are used to approximate the integrals $\pbsI_1(t_{k+1}{;}t_k)$ and $\pbsI_2(t_{k+1}{;}t_k)$, respectively, with $\hat{\pbsI}_{1,k}$ and $\hat{\pbsI}_{2,k}$, as defined in \eqref{approxIntegrals}; note that the approximation $\hat{\pbsI}_{1,k}$ is used in $\hat{\pbsI}_{2,k}$ (see Figure~\ref{fig:errorPropagation}). The errors that arise are a result of both the application of the trapezoidal rule for numerical integration and the errors in the Jacobians, as described by Lemma~\ref{lemma:Jacobians}.

\begin{lemma}
\label{lemma:errI1}
Suppose Assumptions~\ref{as:errorODE},~\ref{as:boundedH} and~\ref{as:time2} hold. Then the error 
\begin{align*}
    \err_k^{\trap,\pbsI_1} = \pbsI_1(t_{k+1};t_k) -\frac{\Delta t_k}{2}\cdot\left(\nabla_xf(t_k,\bfx,\bfp) +\nabla_xf(t_{k+1},\bfx,\bfp)\right)
\end{align*}
due to the approximation of $\pbsI_1(t_{k+1}{;}t_k)$ by the trapezoidal rule\footnote{indicated by the wedge symbol $\wedge$} is
\begin{equation}
    \label{errTrapOrder}
    \left\|\err_k^{\trap,\pbsI_1}\right\|=\mathcal{O}(\Delta t_k^3),
\end{equation}
and the error $\err_k^{\pbsI_{1}}$, from the approximation of $\pbsI_1(t_{k+1};t_k)$ by $\hat{\pbsI}_{1,k}$, satisfies
\begin{equation}
    \label{errI1Order}
    \left\|\err_k^{\pbsI_{1}}\right\|=\mathcal{O}(\Delta t_\tmax^3),
\end{equation}
as $\Delta t_\tmax\to0$.
\end{lemma}
\begin{proof}
It can be proved \cite[Sec. 5.2.1]{SauerTim2006Na} that the error of the trapezoidal rule applied to the integral of a function $g\in C^2(A,\bR)$, with $A\subset\bR$, on the interval $[a,b]\subset A$ is
\begin{equation}
    \label{trapezoidalRuleError}
    \int_a^bg(x)dx-\frac{b-a}{2}\cdot(g(a)+g(b))=-\frac{(b-a)^3}{12}\cdot g''(\xi),
\end{equation}
for some $\xi\in(a,b)$. Using the latter, the components of $\err_k^{\trap,\pbsI_1}$ are
\begin{equation}
    \label{errTrapIntermediate}
    \left(\err_k^{\trap,\pbsI_1}\right)_{l}^{~j}=-\frac{\Delta t_k^3}{12}\cdot\left( \frac{d^2}{dt^2}\nabla_xf(t,\bfx,\bfp)\right)_l^{~j}\Bigg\vert_{t=\xi},
\end{equation}
where in general $\xi\in(t_k,t_{k+1})$ is different for each pair of indices $l,j$. From the properties of matrix norms, we have that for every $t\in[t_k,t_{k+1}]$ and $l,j=1,\dots,n_x$,
\begin{equation*}
\left\| \left( \frac{d^2}{dt^2}\nabla_xf(t,\bfx,\bfp) \right)_l^{~j} \right\| \le C \left\| \frac{d^2}{dt^2}\nabla_xf(t,\bfx,\bfp)\right\|.
\end{equation*}
for a constant $C>0$ that only depends on the choice of norm $\|\cdot\|$.

Using Assumption~\ref{as:time2} for $\frac{d^2}{dt^2}\nabla_xf(t,\bfx,\bfp)$, there exist a constant $C' <\infty$ such that  $C'=\left\|\frac{d^2}{dt^2}\nabla_xf(t,\bfx,\bfp)\right\|_{[0,T]}$. It follows that, for every $l,j=1,\dots,n_x$,
\begin{equation*}
\left\|\left(\err_k^{\trap,\pbsI_1}\right)_{l}^{~j}\right\|\le\frac{CC'}{12}\Delta t_k^3.
\end{equation*}

Using the fact that all matrix norms are equivalent, there exists a constant $C''>0$, which depends on the choice of norm, such that
\begin{equation*}
\left\|\err_k^{\trap,\pbsI_1}\right\|\le C'' \max_{l,j=1,\dots,n_x}\left\|\left(\err_k^{\trap,\pbsI_1}\right)_{l}^{~j}\right\|.
\end{equation*}
Combined with the previous inequality this yields the following upper bound on the error from the trapezoidal rule,
\begin{equation*}
\left\|\err_k^{\trap,\pbsI_1}\right\|\le \frac{CC'C''}{12}\Delta t_k^3.
\end{equation*}
This proves \eqref{errTrapOrder}.

Having established an order of convergence for the error due to the trapezoidal rule, we now expand the error arising from the approximation of $\pbsI _{1} (t_{k+1}{;}t_k)$:
\begin{align*}
    \err_k^{\pbsI_{1}} &= \pbsI_1(t_{k+1}{;}t_k) - \hat{\pbsI}_{1,k} \\
    &=  \pbsI_1(t_{k+1}{;}t_k) - \frac{\Delta t_k}{2}\cdot (\nabla_x\hat{f}_k + \nabla_x\hat{f}_{k+1})\nonumber\\
    &=\pbsI_1(t_{k+1}{;}t_k) -\frac{\Delta t_k}{2}\cdot\left(\nabla_xf(t_k,\bfx,\bfp) +\nabla_xf(t_{k+1},\bfx,\bfp)\right)\\
    & \quad +\frac{\Delta t_k}{2}\cdot\left(\nabla_xf(t_k,\bfx,\bfp) -\nabla_x\hat{f}_k+\nabla_xf(t_{k+1},\bfx,\bfp) - \nabla_x\hat{f}_{k+1}\right)\nonumber\\
    &=\err_k^{\trap,\pbsI_1} + \frac{\Delta t_k}{2}\cdot(\err_k^{\nabla_x}+\err_{k+1}^{\nabla_x})\label{errI1}.
\end{align*}
Together with Lemma~\ref{lemma:Jacobians}, this yields the upper bound

\begin{equation*}
    \left\|\err_k^{\pbsI_1}\right\|\le \left\|\err_k^{\trap,\pbsI_1}\right\|+\mathcal{O}(\Delta t_\tmax^3),
\end{equation*}
which proves the claimed order of convergence \eqref{errI1Order}.
\end{proof}

With these estimates for the errors in $\nabla_x\hat{f}_k$ and $\hat{\pbsI}_{1,k}$, we now turn to the approximation of $\pbsI_{2,k}$. The first part of this lemma, concerning the error due to an application of the trapezoidal rule, is analogous to the first part of Lemma~\ref{lemma:errI1}.

\begin{lemma}
\label{lemma:errI2}
Suppose Assumptions~\ref{as:errorODE}, \ref{as:boundedH}, \ref{as:time1}-\ref{as:time3} hold. Then, the error 
\begin{align*}
    \err_k^{\trap,\pbsI_2}=\pbsI_2(t_{k+1};t_k) - \frac{\Delta t_k}{2}\cdot\nabla_xf(t_{k+1},\bfx,\bfp)\cdot\pbsI_1(t_{k+1};t_k)
\end{align*}
due to the approximation of $\pbsI_2(t_{k+1};t_k)$ by the trapezoidal rule is
\begin{equation}
    \label{errTrapOrder2}
    \left\|\err_k^{\trap,\pbsI_2}\right\|=\mathcal{O}(\Delta t_k^3),
\end{equation}
and the error $\err_k^{\pbsI_{2}}$, from the approximation of $\pbsI_2(t_{k+1}{;}t_k)$ by $\hat{\pbsI}_{2,k}$, is
\begin{equation}
    \label{errI2Order}
    \left\|\err_k^{\pbsI_{2}}\right\|=\mathcal{O}(\Delta t_\tmax^3),
\end{equation}
as $\Delta t_\tmax\to0$.
\end{lemma}
\begin{proof}
The proof of \eqref{errTrapOrder2} is analogous to the proof of Lemma~\ref{lemma:errI1}: by replacing $\err_k^{\trap,\pbsI_1}$ with $\err_k^{\trap,\pbsI_2}$, and $\frac{d^2}{dt^2}\nabla_xf(t,\bfx,\bfp)$ with $\frac{d^2}{dt^2}\left(\nabla_xf(t,\bfx,\bfp)\pbsI_1(t,t_k)\right)$, and defining
\begin{equation*}
\tilde{C}':=\left\|\frac{d^2}{dt^2}\left(\nabla_xf(t,\bfx,\bfp)\pbsI_1(t,t_k)\right)\right\|_{[0,T]},
\end{equation*} we obtain
\begin{equation*}
\left\|\err_k^{\trap,\pbsI_2}\right\|\le \frac{C\tilde{C}'C''}{12}\Delta t_k^3.
\end{equation*}
This proves the order of convergence \eqref{errTrapOrder2}.

To show \eqref{errI2Order}, we note that this error can be expressed as
\begin{align}
    \err_k^{\pbsI_2} &= \pbsI_2(t_{k+1}{;}t_k)-\hat{\pbsI}_{2,k} \nonumber\\
    &=\pbsI_2(t_{k+1}{;}t_k) - \frac{\Delta t_k}{2}\cdot\nabla_xf(t_{k+1},\bfx,\bfp)\cdot\pbsI_1(t_{k+1}{;}t_k) \nonumber \\
    &\quad + \frac{\Delta t_k}{2}\cdot\left(\nabla_xf(t_{k+1},\bfx,\bfp)\cdot\pbsI_1(t_{k+1}{;}t_k)
    -\nabla_x\hat{f}_{k+1}\cdot\hat{\pbsI}_{1,k}\right) \nonumber\\
    &=\err_k^{\trap,\pbsI_2}+ \frac{\Delta t_k}{2}\cdot\left(\nabla_xf(t_{k+1},\bfx,\bfp)\cdot\pbsI_1(t_{k+1}{;}t_k) - \nabla_xf(t_{k+1},\bfx,\bfp) \cdot\hat{\pbsI}_{1,k} \right. \nonumber \\
    & \qquad \qquad \qquad \qquad \qquad  \left. +\nabla_xf(t_{k+1},\bfx,\bfp) \cdot\hat{\pbsI}_{1,k} -\nabla_x\hat{f}_{k+1}\cdot\hat{\pbsI}_{1,k}\right)\nonumber\\
    &=\err_k^{\trap,\pbsI_2}+\frac{\Delta t_k}{2}\cdot\left(\nabla_xf(t_{k+1},\bfx,\bfp) \cdot \err_k^{\pbsI_1}+\err_{k+1}^{\nabla_x}\cdot\hat{\pbsI}_{1,k}\right)\label{errI2}.
\end{align}
Applying the norm operator to \eqref{errI2} and using the triangle inequality gives
\begin{equation*}
  \left\|\err_k^{\pbsI_2}\right\|\le \left\|\err_k^{\trap,\pbsI_2}\right\|+\frac{\Delta t_k}{2}\cdot\left(\left\|\nabla_xf(t_{k+1},\bfx,\bfp)\right\|\left\|\err_k^{\pbsI_1}\right\|+\left\|\err_{k+1}^{\nabla_x}\right\|\left\|\hat{\pbsI}_{1,k}\right\|\right).
\end{equation*}
 From Assumption~\ref{as:time1}, $\left\|\nabla_xf(t_{k+1},\bfx,\bfp)\right\|=\mathcal{O}(1)$ and, from the definition of $\hat{\pbsI}_{1,k}$ (see \eqref{approxIntegrals}), we have $\left\|\hat{\pbsI}_{1,k}\right\|=\mathcal{O}(\Delta t_k)$. From the upper bound in the last displayed formula, combined with 
 Lemmas~\ref{lemma:Jacobians}-\ref{lemma:errI1}, we obtain
\begin{equation*}
\left\|\err_k^{\pbsI_2}\right\|\le \left\|\err_k^{\trap,\pbsI_2}\right\|+\mathcal{O}(\Delta t_\tmax^4),
\end{equation*}
which, together with \eqref{errTrapOrder2}, proves the order of convergence \eqref{errI2Order}.
\end{proof}

Next, we move to the approximation of $\Phi(t_{k+1}{;}t_k)$. As illustrated in Figure~\ref{fig:errorPropagation}, this approximation depends directly on the approximations of $\pbsI_{1}(t_{k+1}{;}t_k)$ and $\pbsI_{2}(t_{k+1}{;}t_k)$, and thus Lemmas~\ref{lemma:errI1}-\ref{lemma:errI2} will be used to obtain the order of convergence of the associated error.

\begin{lemma}
\label{lemma:errPhiOrder}
Suppose that Assumptions~\ref{as:errorODE}, \ref{as:boundedH} and \ref{as:time1}-\ref{as:time3} hold. Then, the error $\err_k^{\Phi}$ in the approximation of $\Phi(t_{k+1}{;}t_k)$ by $\hat{\Phi}(t_{k+1}{;}t_k)$ is
\begin{equation*}
\left\|\err_k^{\Phi}\right\|=\mathcal{O}(\Delta t_\tmax^3),
\end{equation*}
as $\Delta t_\tmax\to0$.
\end{lemma}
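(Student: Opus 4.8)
The plan is to decompose the error $\err_k^{\Phi}$ using the explicit form of the Peano-Baker series truncated at $n=2$. Recall that by definition $\hat{\Phi}(t_{k+1};t_k) = \hat{\mathcal{I}}_{0,k} + \hat{\mathcal{I}}_{1,k} + \hat{\mathcal{I}}_{2,k}$, while the exact state-transition matrix is $\Phi(t_{k+1};t_k) = \sum_{n=0}^{\infty}\mathcal{I}_n(t_{k+1};t_k)$. Since $\hat{\mathcal{I}}_{0,k} = \bfI = \mathcal{I}_0(t_{k+1};t_k)$, subtracting gives
\[
\err_k^{\Phi} = \left(\mathcal{I}_1(t_{k+1};t_k) - \hat{\mathcal{I}}_{1,k}\right) + \left(\mathcal{I}_2(t_{k+1};t_k) - \hat{\mathcal{I}}_{2,k}\right) + \sum_{n \ge 3}\mathcal{I}_n(t_{k+1};t_k) = \err_k^{\mathcal{I}_1} + \err_k^{\mathcal{I}_2} + \sum_{n\ge 3}\mathcal{I}_n(t_{k+1};t_k).
\]
The first two terms are controlled by Lemmas \ref{lemma:errI1} and \ref{lemma:errI2}, each being $O(\Delta t_k^3)$. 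So the remaining work is to bound the truncation remainder $\sum_{n\ge 3}\mathcal{I}_n(t_{k+1};t_k)$.

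For the remainder term I would proceed by the standard estimate on iterated integrals. Using Assumption \ref{as:time1}, set $M := \|\grad_x f(\bfx(\cdot))\|_{[0,T]} < \infty$ (adjusting for a submultiplicativity constant of the chosen norm if necessary). A straightforward induction on the recursive definition \eqref{recursiveTerm} shows that for $s \le t$ in $I$,
\[
\|\mathcal{I}_n(t;s)\| \le \frac{(M(t-s))^n}{n!},
\]
since $\|\mathcal{I}_{n+1}(t;s)\| \le \int_s^t \|\bfA(\tau)\|\,\|\mathcal{I}_n(\tau;s)\|\,d\tau \le M\int_s^t \frac{(M(\tau-s))^n}{n!}\,d\tau = \frac{(M(t-s))^{n+1}}{(n+1)!}$. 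Applying this with $t = t_{k+1}$, $s = t_k$, $t - s = \Delta t_k$,
\[
\left\|\sum_{n \ge 3}\mathcal{I}_n(t_{k+1};t_k)\right\| \le \sum_{n\ge 3}\frac{(M\Delta t_k)^n}{n!} \le \frac{(M\Delta t_k)^3}{3!}\sum_{j\ge 0}\frac{(M\Delta t_k)^j}{j!} = \frac{(M\Delta t_k)^3}{6}e^{M\Delta t_k},
\]
which is $O(\Delta t_k^3)$ since $\Delta t_k \le \Delta t_{max}$ is bounded (e.g. by $T$). Combining this with the two lemma bounds via the triangle inequality yields $\|\err_k^{\Phi}\| \le \|\err_k^{\mathcal{I}_1}\| + \|\err_k^{\mathcal{I}_2}\| + \|\sum_{n\ge3}\mathcal{I}_n(t_{k+1};t_k)\| = O(\Delta t_k^3)$, as claimed.

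The main obstacle here is really just bookkeeping rather than any deep difficulty: one must be careful that the constant $M$ from Assumption \ref{as:time1} genuinely dominates $\|\bfA(\tau)\|$ uniformly over the relevant interval (this is exactly what \ref{as:time1} provides, since $\bfA(\tau) = \grad_x f(\bfx(\tau))$), and that the submultiplicative constant of the matrix norm is folded into $M$ so the induction closes cleanly. There is also a small subtlety worth a remark: the factorial decay in the iterated-integral bound is what makes truncation at $n=2$ "optimal" in the sense alluded to in Section \ref{sec:genSolution} — the remainder after truncating at $n=2$ is already $O(\Delta t_k^3)$, matching the order of the trapezoidal-rule errors $\err_k^{\mathcal{I}_1}$ and $\err_k^{\mathcal{I}_2}$, so keeping further terms in the series would not improve the overall rate. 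No further regularity beyond \ref{as:time1} (for the remainder) and the hypotheses already invoked by Lemmas \ref{lemma:errI1}–\ref{lemma:errI2} is needed.
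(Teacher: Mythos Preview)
Your proposal is correct and follows essentially the same route as the paper: decompose $\err_k^{\Phi}$ into $\err_k^{\calI_1}+\err_k^{\calI_2}$ plus the Peano--Baker tail $\sum_{n\ge 3}\calI_n$, invoke Lemmas~\ref{lemma:errI1}--\ref{lemma:errI2} for the first two, and bound the tail via the standard iterated-integral estimate $\|\calI_n\|\le (M\Delta t_k)^n/n!$ using Assumption~\ref{as:time1}. The only cosmetic difference is that the paper appeals to the scalar Peano--Baker identity to collapse the nested integrals, whereas you run the induction directly; your resulting bound $\tfrac{(M\Delta t_k)^3}{6}e^{M\Delta t_k}$ is in fact slightly cleaner than the paper's $\Delta t_k^3 e^{C_{\nabla_x}}$, which needs the extra assumption $\Delta t_k<1$.
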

\begin{proof}
Recalling the definition \eqref{PhiApproximation} of the approximation $\hat{\Phi}(t_{k+1}{;}t_k)$, we can express the associated $\err_k^{\Phi}$ as
\begin{align}
\err^{\Phi}_k &=\Phi(t_{k+1}{;}t_k)-\idI_{n_x} - \hat{\pbsI}_{1,k} - \hat{\pbsI}_{2,k} \nonumber\\
&=\Phi(t_{k+1}{;}t_k) - \sum_{n=0}^{2} \pbsI_n(t_{k+1}{;}t_k)  \nonumber \\
&\quad + \idI_{n_x} + \pbsI_1(t_{k+1}{;}t_k) + \pbsI_2(t_{k+1}{;}t_k) - \idI_{n_x} - \hat{\pbsI}_{1,k} - \hat{\pbsI}_{2,k} \nonumber\\
&=\sum_{n=3}^\infty\pbsI_n(t_{k+1}{;}t_k) + \err^{\pbsI_1}_k + \err^{\pbsI_2}_k\label{errPhi},
\end{align}

Lemmas~\ref{lemma:errI1} and \ref{lemma:errI2} describe the behaviour of $\err_k^{\pbsI_1}$ and $\err_k^{\pbsI_2}$ in terms of $\Delta t_k$. To study the term $\sum_{n=3}^\infty\pbsI_n(t_{k+1}{;}t_k)$, we recall from the definition \eqref{recursiveTerm} of the summands $\pbsI_n(t_{k+1}{;}t_k)$,
\begin{equation*}
\pbsI_{n}(t{;}s) = \int_{s}^t \nabla_xf(\tau,\bfx,\bfp)\pbsI_{n-1}(\tau{;}s)d\tau, \ \ n \geq 1.
\end{equation*}
Since $\pbsI_0(t,s)=\idI_{n_x}$, each such term can be expressed as
\begin{equation*}
\pbsI_n(t_{k+1}{;}t_k) = \int_{t_k}^{t_{k+1}}\int_{t_k}^{\tau_1}\cdots\int_{t_k}^{\tau_{n-1}}\nabla_xf(\tau_{n},\bfx,\bfp)\cdots \nabla_xf(\tau_1,\bfx,\bfp) d\tau_{n}\cdots d\tau_1.
\end{equation*}
Using this identity we can obtain an upper bound on the norm of $\sum_{n=3}^\infty\pbsI_n(t_{k+1}{;}t_k)$, the part of the sum that is removed in the truncation term:
\begin{equation}
    \label{majorationNormPBS}
    \begin{split}
    \left\|  \sum_{n=3}^\infty\pbsI_n(t_{k+1}{;}t_k)\right\| &\le \sum_{n=3}^\infty\left\|\pbsI_n(t_{k+1}{;}t_k) \right\| \\
    &=\sum_{n=3}^\infty\left\|\int_{t_k}^{t_{k+1}}\int_{t_k}^{\tau_1}\cdots\int_{t_k}^{\tau_{n-1}}\nabla_xf(\tau_{n},\bfx,\bfp)\cdots \nabla_xf(\tau_1,\bfx,\bfp) d\tau_{n}\cdots d\tau_1 \right\| \\
    &\le \sum_{n=3}^\infty\int_{t_k}^{t_{k+1}}\int_{t_k}^{\tau_1}\cdots\int_{t_k}^{\tau_{n-1}}\left\|\nabla_xf(\tau_{n},\bfx,\bfp)\cdots \nabla_xf(\tau_1,\bfx,\bfp)\right\| d\tau_{n}\cdots d\tau_1 \\
    &\le \sum_{n=3}^\infty\int_{t_k}^{t_{k+1}}\int_{t_k}^{\tau_1}\cdots\int_{t_k}^{\tau_{n-1}}\left\|\nabla_xf(\tau_{n},\bfx,\bfp)\right\|\cdots \left\|\nabla_xf(\tau_1,\bfx,\bfp)\right\| d\tau_{n}\cdots d\tau_1 \\
    &=\sum_{n=3}^\infty\frac{1}{n!} \left(\int_{t_k}^{t_{k+1}}\left\|\nabla_xf(\tau,\bfx,\bfp)\right\|d\tau\right)^n\le\sum_{n=3}^\infty\frac{1}{n!} (C_{\nabla_x})^n(\Delta t_k)^n,
\end{split}
\end{equation}
with $C_{\nabla_x}:=\left\|\nabla_xf(t,\bfx,\bfp)\right\|_{[0,T]}$, which is finite by Assumption~\ref{as:time1}. The last equality is due to the fact that the multiple integrals
\begin{equation*}
\int_{t_k}^{t_{k+1}}\int_{t_k}^{\tau_1}\cdots\int_{t_k}^{\tau_{n-1}}\left\|\nabla_xf(\tau_{n},\bfx,\bfp)\right\|\cdots \left\|\nabla_xf(\tau_1,\bfx,\bfp)\right\| d\tau_{n}\cdots d\tau_1,
\end{equation*}
can be seen as the summands of the Peano-Baker series for the one-dimensional ODE $\dot{z}(t)=\left\|\nabla_xf(t,\bfx,\bfp)\right\|\cdot z(t)$; since the terms $\left\|\nabla_xf(\tau_i,\bfx,\bfp)\right\|$ commute (being scalar functions), such multiple integrals are shown in \cite{PeanoBaker} to be equal to the simpler terms $\frac{1}{n!} \left(\int_{t_k}^{t_{k+1}}\left\|\nabla_xf(\tau,\bfx,\bfp)\right\|d\tau\right)^n$.

If we assume $\Delta t_k<1$ (in fact, we consider $\Delta t_{\tmax}\to 0$), then $\Delta t_k^3\ge \Delta t_k^n$ for every $n\ge 3$, and we obtain the upper bound
 \begin{equation}
     \label{seriesMajoration}
     \left\|  \sum_{n=3}^\infty\pbsI_n(t_{k+1}{;}t_k)\right\|\le (\Delta t_k)^3\sum_{n=3}^\infty\frac{1}{n!} (C_{\nabla_x})^n\le \Delta t_k^3\sum_{n\ge 0}\frac{1}{n!} (C_{\nabla_x})^n=\Delta t_k^3 e^{C_{\nabla_x}}.
 \end{equation}

Taking the norm of \eqref{errPhi}, and using the latter upper bound for $\left\| \sum_{n=3}^\infty\pbsI_n(t_{k+1}{;}t_k) \right\|$,  we obtain
\begin{equation*}
\left\|\err^{\Phi}_k\right\|\le \Delta t_k^3 e^{C_{\nabla_x}}+\left\|\err_k^{\pbsI_1}\right\|+\left\|\err_k^{\pbsI_2}\right\|,
\end{equation*}
where all three terms at the right-hand side are $\mathcal{O}(\Delta t_\tmax^3)$. This concludes the proof.
\end{proof}

Before we proceed with analysing the approximation of $\pbsJ _k$, which is the last term to consider before we move on to the approximation of $\bfS$ (the final node in Figure~\ref{fig:errorPropagation}), we discuss the choice of truncating the Peano-Baker series at $n=2$. Introducing additional terms in the approximation (i.e., truncating after a larger number of summands) would lead to a higher power of $\Delta t_k$ in the upper bound \eqref{seriesMajoration}, which in turn would imply faster convergence. However, we rely on approximations of the summands in the Peano-Baker series rather than on the exact terms $\pbsI_n$, and these approximations retain an error of order $\mathcal{O}(\Delta t_\tmax^3)$ (see Lemmas~\ref{lemma:errI1} and \ref{lemma:errI2}). Therefore, although including additional terms in the series would suggest a higher order of convergence, this would be cancelled by the $\mathcal{O}(\Delta t_\tmax^3)$ appearing in $\hat{\pbsI}_{1,k}$ and $\hat{\pbsI}_{2,k}$.

We could also opt to truncate the Peano-Baker series at $n=1$ instead of $n=2$, i.e.\ retaining only $\idI_{n_x}$ and $\pbsI_{1}(t_{k+1}{;}t_k)$. In this case, the opposite situation would arise: we would lower the power of $\Delta t_k$ in \eqref{seriesMajoration} to $\Delta t_k^2$, and the error $\err_k^{\Phi}$ would be $\mathcal{O}(\Delta t_\tmax^2)$. Thus, we would not benefit from the third order convergence of $\hat{\pbsI}_{1,k}$. The conclusion is that if the integrals in $\pbsI_1(t_{k+1}{;}t_k)$ and $\pbsI_2(t_{k+1}{;}t_k)$ are approximated with the trapezoidal rule (which produces $\mathcal{O}(\Delta t_k^3)$ errors), then it is optimal to truncate the Peano-Baker series at $n=2$; optimal here means obtaining the highest possible order of convergence with as few summands as possible.

We now move to the analysis of the approximation error associated with $\pbsJ _k$, the final error to consider before proving Theorem~\ref{thm:error}. Recalling the definition,
\begin{equation*}
    \pbsJ_k:=\int_{t_k}^{t_{k+1}}\Phi(t_k{;}\tau)\cdot \nabla_pf(\tau,\bfx,\bfp)d\tau,
\end{equation*}
we note that an application of the trapezoidal rule will lead to the state-transition matrix $\Phi(t_k{;}t_{k+1})$—the inverse of $\Phi(t_{k+1}{;}t_k)$ (see Proposition~\ref{prop:Phi})— appearing. However, for greater efficiency, we compute $\Phi(t_k{;}t_{k+1})$ by the Peano-Baker series, instead of computing the inverse $\Phi(t_{k+1}{;}t_k)^{-1}$. In particular, we observe that $\Phi(t_k{;}t_{k+1})$ can be obtained from $\Phi(t_{k+1}{;}t_k)$ by interchanging the limits of integration in each term $\pbsI_n$ (see \eqref{PBS} and \eqref{recursiveTerm}); interchanging the limits of integration, does not change the norm of an integral. Similarly, the approximations of the $\pbsI_n$s by the trapezoidal rule are the same (up to the sign) for both $\Phi(t_{k+1}{;}t_k)$ and $\Phi(t_k{;}t_{k+1})$; thus, they have the same norm. By replicating the arguments we applied to $\Phi(t_{k+1}{;}t_k)$ throughout Lemmas~\ref{lemma:errI1}-\ref{lemma:errPhiOrder}, also on its inverse $\Phi(t_k{;}t_{k+1})$, we obtain the same bound for the error term $\err_k^{\Phi,\tinv}$: as $\Delta t_\tmax\to0$,
\begin{equation*}
\left\|\err_k^{\Phi,\tinv}\right\|=\left\|\Phi(t_k{;}t_{k+1})-\hat{\Phi}(t_k;t_{k+1})\right\|=\mathcal{O}(\Delta t_\tmax^3).
\end{equation*}

\begin{lemma}
\label{lemma:errJOrder}
Suppose that Assumptions~\ref{as:errorODE}, \ref{as:boundedH}, \ref{as:time} hold. Then, the error $\err_k^{\trap,\pbsJ}$ associated with the approximation of the integral $\pbsJ_k$ by the trapezoidal rule is
\begin{equation}
    \label{errTrapJOrder}
    \left\|\err_k^{\trap,\pbsJ}\right\|=\mathcal{O}(\Delta t_k^3),
\end{equation}
and the whole error in the approximation of $\pbsJ_k$ with $\hat{\pbsJ}_k$ is
\begin{equation}
    \label{errJOrder}
    \left\|\err_k^{\pbsJ}\right\|=\mathcal{O}(\Delta t_\tmax^3),
\end{equation}
as $\Delta t_\tmax\to0$.

\end{lemma}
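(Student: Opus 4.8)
The plan is to mirror the structure of Lemmas \ref{lemma:errI1} and \ref{lemma:errI2}: first bound the error committed by the trapezoidal rule alone, and then fold in the errors already quantified for the Jacobians (Lemma \ref{lemma:Jacobians}) and for the inverse state-transition matrix (the $\err_k^{\Phi,inv}$ bound established just above).

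For \eqref{errTrapJOrder} I would apply the scalar trapezoidal error identity \eqref{trapezoidalRuleError} entrywise to the matrix-valued integrand $g(\tau):=\Phi(t_k;\tau)\cdot\nabla_pf(\bfx(\tau))$ on $[t_k,t_{k+1}]$. Property \ref{itm:first} of Proposition \ref{prop:Phi} gives $g(t_k)=\nabla_pf(\bfx(t_k))$, so the trapezoidal approximation of $\calJ_k$ is exactly $\frac{\Delta t_k}{2}\bigl(\nabla_pf(\bfx(t_k))+\Phi(t_k;t_{k+1})\nabla_pf(\bfx(t_{k+1}))\bigr)$, and each entry of $\err_k^{trap,\calJ}$ equals $-\frac{\Delta t_k^3}{12}$ times the corresponding entry of the second time-derivative of $g$ at some intermediate point. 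Assumption \ref{as:time4} bounds $\bigl\|\tfrac{d^2}{dt^2}\bigl(\Phi(t_k;t)\nabla_pf(\bfx(t))\bigr)\bigr\|_{[0,T]}$, and combining this with equivalence of matrix norms on finite-dimensional spaces yields $\|\err_k^{trap,\calJ}\|=O(\Delta t_k^3)$, exactly as in the first parts of Lemmas \ref{lemma:errI1}--\ref{lemma:errI2}.

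For \eqref{errJOrder} I would write $\err_k^{\calJ}=\calJ_k-\hat{\calJ}_k$ and insert the exact trapezoidal sum, obtaining $\err_k^{\calJ}=\err_k^{trap,\calJ}+\frac{\Delta t_k}{2}\bigl[(\nabla_pf(\bfx(t_k))-\nabla_p\hat{f}_k)+(\Phi(t_k;t_{k+1})\nabla_pf(\bfx(t_{k+1}))-\hat{\Phi}(t_k;t_{k+1})\nabla_p\hat{f}_{k+1})\bigr]$. The first bracketed term is $\frac{\Delta t_k}{2}\err_k^{\nabla_p}$; for the second, adding and subtracting $\hat{\Phi}(t_k;t_{k+1})\nabla_pf(\bfx(t_{k+1}))$ splits it as $\err_k^{\Phi,inv}\cdot\nabla_pf(\bfx(t_{k+1}))+\hat{\Phi}(t_k;t_{k+1})\cdot\err_{k+1}^{\nabla_p}$. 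Taking norms: $\|\err_k^{\nabla_p}\|$ and $\|\err_{k+1}^{\nabla_p}\|$ are $o(\Delta t_k^3)$ by Lemma \ref{lemma:Jacobians} and Assumption \ref{as:errorODE}; $\|\err_k^{\Phi,inv}\|=O(\Delta t_k^3)$ by the bound obtained just before this lemma (replaying Lemmas \ref{lemma:errI1}--\ref{lemma:errPhiOrder} on $\Phi(t_k;t_{k+1})$); $\|\nabla_pf(\bfx(t_{k+1}))\|=O(1)$ on $[0,T]$; and $\|\hat{\Phi}(t_k;t_{k+1})\|=O(1)$ since $\hat{\Phi}(t_k;t_{k+1})=I-\hat{\calI}_{1,k}+\hat{\calI}_{2,k}$ with $\hat{\calI}_{1,k}=O(\Delta t_k)$ and $\hat{\calI}_{2,k}=O(\Delta t_k^2)$. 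Multiplying through by the prefactor $\frac{\Delta t_k}{2}$, every correction term is $O(\Delta t_k^4)$, so $\|\err_k^{\calJ}\|\le\|\err_k^{trap,\calJ}\|+O(\Delta t_k^4)=O(\Delta t_k^3)$.

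I do not expect a serious obstacle: once Lemmas \ref{lemma:Jacobians}--\ref{lemma:errPhiOrder} and the $\err^{\Phi,inv}$ estimate are in place, this lemma is essentially bookkeeping. The one point requiring care is the justification that the integrand $g(\tau)=\Phi(t_k;\tau)\nabla_pf(\bfx(\tau))$ is $C^2$ in time, which is precisely the content of Assumption \ref{as:time4} and makes \eqref{trapezoidalRuleError} applicable; a lesser point is recording the $O(1)$ bounds for $\|\nabla_pf(\bfx(\cdot))\|_{[0,T]}$ and $\|\hat{\Phi}(t_k;t_{k+1})\|$ so that the product estimates in the last step are legitimate.
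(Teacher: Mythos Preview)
Your proposal is correct and follows essentially the same route as the paper: apply the trapezoidal error identity componentwise using Assumption~\ref{as:time4} to get \eqref{errTrapJOrder}, then decompose $\err_k^{\calJ}$ by inserting and subtracting the exact trapezoidal sum and bound the residual correction terms using Lemma~\ref{lemma:Jacobians}, Assumption~\ref{as:errorODE}, and the $\err_k^{\Phi,inv}$ estimate. The only cosmetic difference is in the add-and-subtract step for the product $\Phi(t_k;t_{k+1})\nabla_pf(\bfx(t_{k+1}))-\hat{\Phi}(t_k;t_{k+1})\nabla_p\hat{f}_{k+1}$: the paper inserts $\Phi(t_k;t_{k+1})\nabla_p\hat{f}_{k+1}$ (yielding $\Phi(t_k;t_{k+1})\cdot\err_{k+1}^{\nabla_p}+\err_k^{\Phi,inv}\cdot\nabla_p\hat{f}_{k+1}$), whereas you insert $\hat{\Phi}(t_k;t_{k+1})\nabla_pf(\bfx(t_{k+1}))$; both splittings give $O(\Delta t_k^3)$ terms that become $O(\Delta t_k^4)$ after the $\tfrac{\Delta t_k}{2}$ prefactor.
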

\begin{proof}
As in the proofs of Lemmas~\ref{lemma:errI1} and \ref{lemma:errI2}, we can invoke the error of the trapezoidal rule \eqref{trapezoidalRuleError} and obtain
\begin{align*}
    \left(\err_k^{\trap,\pbsJ}\right)_i^j=-\frac{\Delta t_k^3}{12}\cdot\left(\frac{d^2}{dt^2}\left(\Phi(t_k{;}t)\cdot\nabla_pf(t,\bfx,\bfp)\right)\right)_i^j\Bigg\vert_{t=t_{\xi}},
\end{align*}
for some $\xi\in(t_k{;}t_{k+1})$. The proof of \eqref{errTrapJOrder} is now analogous to Lemmas \ref{lemma:errI1} and \ref{lemma:errI2} and we omit the details.

Moving to the error $\err_k^\pbsJ$, based on approximating the integral $\pbsJ_k$ with the trapezoidal rule, we first expand the error similar to what was done for $\err_k^{\pbsI_1}$ and $\err_k^{\pbsI_2}$:
\begin{align}
    \err_k^{\pbsJ} &= \int_{t_k}^{t_{k+1}}\Phi(t_k{;}\tau)\cdot \nabla_pf(\tau,\bfx,\bfp)d\tau - \frac{\Delta t_k}{2}\cdot(\nabla_p\hat{f}_k+\hat{\Phi}(t_{k}{;}t_{k+1})\cdot\nabla_p\hat{f}_{k+1}) \nonumber\\
    &= \int_{t_k}^{t_{k+1}}\Phi(t_k{;}\tau)\cdot \nabla_pf(\tau,\bfx,\bfp)d\tau \\ 
    &\quad -\frac{\Delta t_k}{2}\cdot(\nabla_pf(t_k,\bfx,\bfp)+\Phi(t_{k}{;}t_{k+1})\cdot\nabla_pf(t_{k+1},\bfx,\bfp) \nonumber\\
    & \quad +\frac{\Delta t_k}{2}\cdot(\nabla_pf(t_k,\bfx,\bfp)+\Phi(t_{k}{;}t_{k+1})\cdot\nabla_pf(t_{k+1},\bfx,\bfp) \nonumber \\
    & \quad - \frac{\Delta t_k}{2}\cdot(\nabla_p\hat{f}_k+\hat{\Phi}(t_{k}{;}t_{k+1})\cdot\nabla_p\hat{f}_{k+1}) \nonumber\\
    &=\err_k^{\trap,\pbsJ}+\frac{\Delta t_k}{2}\cdot \left(\err_k^{\nabla_p}+\Phi(t_k{;}t_{k+1})\cdot\nabla_pf(t_{k+1},\bfx,\bfp)-\hat{\Phi}(t_k{;}t_{k+1})\cdot\nabla_p\hat{f}_{k+1}\right)\nonumber\\
    &=\err_k^{\trap,\pbsJ}+\frac{\Delta t_k}{2}\cdot \bigl(\err_k^{\nabla_p}+\Phi(t_k{;}t_{k+1})\cdot\nabla_pf(t_{k+1},\bfx,\bfp)-\Phi(t_k{;}t_{k+1})\cdot\nabla_p\hat{f}_{k+1}\nonumber\\
    & \quad +\Phi(t_k{;}t_{k+1})\cdot\nabla_p\hat{f}_{k+1}-\hat{\Phi}(t_k{;}t_{k+1})\cdot\nabla_p\hat{f}_{k+1}\bigr)\nonumber\\
    &=
    \err_k^{\trap,\pbsJ}+\frac{\Delta t_k}{2}\cdot \left(\err_k^{\nabla_p}+\Phi(t_k{;}t_{k+1})\cdot \err_{k+1}^{\nabla_p}+\err_k^{\Phi,\tinv}\cdot\nabla_p\hat{f}_{k+1}\right)\label{errJ}.
\end{align}
From the previous results and Assumption~\ref{as:time1p}, the terms in the parenthesis in \eqref{errJ} are $\mathcal{O}(\Delta t_\tmax^2)$, hence
\begin{equation*}
\left\|\err_k^{\pbsJ}\right\|\le \left\|\err_k^{\trap,\pbsJ}\right\|+\mathcal{O}(\Delta t_\tmax^3)=\mathcal{O}(\Delta t_\tmax^3),
\end{equation*}
from which the claim \eqref{errJOrder} follows .
\end{proof}

With Lemma~\ref{lemma:errJOrder}, we now have estimates for all errors that propagate—as illustrated in Figure~\ref{fig:errorPropagation}—into the approximation $\hat{\bfS}_{k+1}$ of the sensitivity matrix at time step $t_{k+1}$. We are now ready to prove Theorem~\ref{thm:error}, the characterisation of the error in the approximation of the sensitivity matrix $\hat{\bfS}_{k+1}$ at time step $t_{k+1}$.

\begin{proof}[Proof of Theorem~\ref{thm:error}]
To estimate the error $\err_{k+1}^\bfS$ in the approximation of $\bfS$, we start from the exact expression for $\bfS(t_{k+1})$, given in \eqref{exactSensitivity}:
\begin{equation*}
    \bfS(t_{k+1}) = \Phi(t_{k+1}{;}t_k)\cdot\left(\bfS(t_k)+\pbsJ_k\right).
\end{equation*}
The approximation naturally takes a similar recursive form, using the approximations $\hat \Phi (t_{k+1}{;}t_k)$ and $\hat \pbsJ _k$,
\begin{equation*}
    \hat \bfS _{k+1} = \hat \Phi (t_{k+1}{;}t_k) \cdot\left(\hat \bfS _k+\hat \pbsJ_k\right).
\end{equation*}
In order to analyse the associated error, first we replace every exact term in the definition of $\bfS (t_{k+1})$ with the sum of the corresponding  approximation and error; for example, we replace $\bfS(t_{k+1})$ with $\hat{\bfS}_{k+1}+\err_{k+1}^{\bfS}$. This leads to the following relation for the approximating terms and errors, implicitly defining the error $\err_{k+1}^\bfS$,
\begin{align*}
    \hat{\bfS}_{k+1}+\err_{k+1}^\bfS&=\left(\hat{\Phi}(t_{k+1}{;}t_k)+\err_k^{\Phi}\right)\cdot\left(\left(\hat{\bfS}_k+\err_k^{\bfS}\right)+\left(\hat{\pbsJ}_k+\err_k^\pbsJ\right)\right).
\end{align*}
By expanding the product, we identify the expression $\hat{\Phi}(t_{k+1{;}t_k})\cdot \hat{\bfS}_k+\hat{\pbsJ}_k$ on the right-hand side, which cancels $\hat{\bfS}_{k+1}$ on the left-hand side.

As a result, the error in the sensitivity matrix can be expressed as
\begin{align*}
    \err_{k+1}^\bfS&=\hat{\Phi}(t_{k+1}{;}t_k)\cdot\left(\err_k^\bfS+\err_k^{\pbsJ}\right)+\err_k^{\Phi}\cdot \left(\hat{\bfS}_k+\err_k^{\bfS}+\hat{\pbsJ}_k+\err_k^{\pbsJ}\right)\\
    &=\hat{\Phi}(t_{k+1}{;}t_k)\cdot \err_k^\bfS+\hat{\Phi}(t_{k+1}{;}t_k)\cdot \err_k^{\pbsJ}+\err_k^{\Phi}\cdot \left(\bfS(t_k)+\pbsJ_k\right),
\end{align*}
where in the last equality we re-introduced the exact terms for the sensitivity matrix at time step $t_k$ and the exact integral $\pbsJ_k$. This recursive expression can be expanded, using that $\err_0^\bfS=0$,
\begin{equation*}
\err_{k+1}^\bfS = \sum_{h=0}^k\left(\prod_{j=h+1}^k\hat{\Phi}(t_{j+1}{;}t_j)\cdot\left(\hat{\Phi}(t_{h+1}{;}t_h)\cdot \err_h^{\pbsJ}+\err_h^\Phi\cdot(\bfS(t_h)+\pbsJ_h)\right)\right).
\end{equation*}
Taking the norm of the error, we obtain the following bound
\begin{align}
    \label{inequalityErrS}
   & \left\|\err_{k+1}^\bfS\right\| \nonumber \\
   &\quad \le \sum_{h=0}^k\left(\prod_{j=h+1}^k\left\|\hat{\Phi}(t_{j+1}{;}t_j)\right\|\cdot\left(\left\|\hat{\Phi}(t_{h+1}{;}t_h)\right\|\cdot \left\|\err_h^{\pbsJ}\right\|+\left\|\err_h^\Phi\right\|\cdot(\left\|\bfS(t_h)\right\|+\left\|\pbsJ_h\right\|)\right)\right).
\end{align}

To understand the convergence rate of the error $\err _{k+1} ^\bfS$, it now suffices to consider the terms on the right-hand side of \eqref{inequalityErrS}.

We start by considering $\hat \Phi (t_{j+1}{;} t_j)$. From the definition  \eqref{PhiApproximation}, applying the norm operator we obtain
\begin{equation*}
\left\|\hat{\Phi}(t_{j+1}{;}t_j)\right\|\le 1 + \Delta t_j C_{\nabla_x} + \frac{\Delta t_j^2}{2}C_{\nabla_x}^2,
\end{equation*}
for every $j=0,\dots, k${;} here $C_{\nabla _x} = \left\| \nabla _x f (\bfx (\cdot)) \right\| _{[0,T]}$, which is finite by Assumption~\ref{as:time1}. This term is $\mathcal{O}(1)$ for $\Delta t_j\to 0$. For arguments used later in the proof, it is convenient to define $C_{\Delta t}:=C_{\nabla_x}\cdot\left(1+\frac{\Delta t_{\tmax}C_{\nabla_x}}{2}\right)$, which is not a constant, but depends on $\Delta t_{\tmax}$ and $C_{\Delta t}\to C_{\nabla_x}$ as $\Delta t_{\tmax}\to 0$. With this definition we can rewrite the upper bound as
\begin{equation*}
\left\|\hat{\Phi}(t_{j+1}{;}t_j)\right\|\le 1 +C_{\Delta t}\Delta t_j.
\end{equation*}

From Lemmas~\ref{lemma:errJOrder} and \ref{lemma:errPhiOrder} we know that the error terms $\err_h^\pbsJ$ and $\err_h^{\Phi}$ are $\mathcal{O}(\Delta t_\tmax^3)$.

By Assumption~\ref{as:matrices}, the exact sensitivity matrix at time step $t_h$ can be bounded as $\left\|\bfS(t_h)\right\|\le C_\bfS$, hence $\left\|\bfS(t_h)\right\|=\mathcal{O}(1)$, and $\left\|\Phi(t_h;t)\cdot\nabla_pf(t,\bfx,\bfp)\right\|_{[0,T]}\le C_{\pbsJ}$ for every $h=0,\dots,k$. It follows that the integral term $\pbsJ_h$ can be bounded as
\begin{equation*}
\left\|\pbsJ_h\right\| = \left\|\int_{t_h}^{t_{h+1}}\Phi(t_h{;}\tau)\cdot\nabla_pf(\tau,\bfx,\bfp)d\tau\right\|\le C_{\pbsJ}\Delta t_h,
\end{equation*}
and we conclude that $\left\|\pbsJ_h\right\|=\mathcal{O}(\Delta t_h)$.
As a consequence, the term
\begin{equation*}
\left\|\hat{\Phi}(t_{h+1}{;}t_h)\right\|\cdot \left\|\err_h^{\pbsJ}\right\|+\left\|\err_h^\Phi\right\|\cdot(\left\|\bfS(t_h)\right\|+\left\|\pbsJ_h\right\|),
\end{equation*}
in \eqref{inequalityErrS} is $\mathcal{O}(\Delta t_\tmax^3)$ and we can define a new constant $0<C<\infty$, such that
\begin{align*}
\left\|\err_{k+1}^\bfS\right\|& \le\sum_{h=0}^k\left(\prod_{j=h+1}^k\left\|\hat{\Phi}(t_{j+1}{;}t_j)\right\|\cdot C\Delta t_\tmax^3\right) \\
&\le\sum_{h=0}^k\left(\prod_{j=h+1}^k\left(1 + C_{\Delta t}\Delta t_j\right)\cdot C\Delta t_\tmax^3\right).
\end{align*}
Moreover, we can bound $\Delta t_j$ by $\Delta t_{\tmax}$, which gives an upper bound for $\err_{k+1}^\bfS$ in terms of $\Delta t_{\tmax}$,
\begin{equation*}
\left\|\err_{k+1}^\bfS\right\|\le\sum_{h=0}^k\left(1 +C_{\Delta t} \Delta t_{\tmax}\right)^{k-h}\cdot C\Delta t_{\tmax}^3.
\end{equation*}
The sum $\sum_{h=0}^k\left(1 + C_{\Delta t}\Delta t_{\tmax}\right)^{k-h}$ can be rewritten as $\sum_{h=0}^k\left(1 + C_{\Delta t}\Delta t_{\tmax}\right)^{h}$, which admits the closed formula
\begin{align}
    \sum_{h=0}^k\left(1 + C_{\Delta t}\Delta t_{\tmax}\right)^{h} &= \frac{1-\left(1 + C_{\Delta t}\Delta t_{\max}\right)^{k+1}}{1-\left(1 +C_{\Delta t} \Delta t_{\max}\right)} \nonumber\\
    &=\frac{\left(1 + C_{\Delta t}\Delta t_{\max}\right)^{k+1}-1}{C_{\Delta t} \Delta t_{\max}}. \label{geomSum}
\end{align}
First, we consider the term $\left(1 + C_{\Delta t}\Delta t_{\max}\right)^{k+1}$ in the numerator. Since the exponent $k$ runs over $k=0,\dots,K-1$, and the term inside the parenthesis is non-negative,
\begin{equation}
\label{inequalityExp}
\left(1 + C_{\Delta t}\Delta t_{\max}\right)^{k+1}\le \left(1 + C_{\Delta t}\Delta t_{\max}\right)^K.
\end{equation}

Second, the total number of time steps $K$ can be bounded from above,
\begin{align*}
    K\le \frac{T}{\Delta t_{\tmin}}=\frac{T}{\Delta t_{\tmin}}\frac{\Delta t_\tmax}{\Delta t_\tmax} = \Xi\frac{T}{\Delta t_\tmax}.
\end{align*}

Inserting this in \eqref{inequalityExp} yields
\begin{equation*}
\left(1 + C_{\Delta t}\Delta t_{\max}\right)^{K}\le \left(1 + C_{\Delta t}\Delta t_{\max}\right)^{ \Xi \frac{T}{\Delta t_{\tmax}}}.
\end{equation*}

To finish the proof, we use that the function $g:(0,\infty)\to\bR$, $g(x)=(1+x)^{\frac{1}{x}}$ is monotonically decreasing and that $\lim_{x\to0^+}g(x)=e$. Since $C_{\Delta t}\Delta t_{\tmax}$ is increasing in $\Delta t_{\tmax}$, it follows that
\begin{equation*}
(1+C_{\Delta t}\Delta t_{\tmax})^{\frac{1}{C_{\Delta t}\Delta t_{\tmax}}}<e,
\end{equation*}
hence
\begin{equation*}\left(1 + C_{\Delta t}\Delta t_{\max}\right)^{ \Xi \frac{T}{\Delta t_{\tmax}}}<e^{\Xi C_{\Delta t}T}.
\end{equation*}
The previous results show that \eqref{geomSum} is bounded from above by
\begin{equation*}
\frac{e^{\Xi C_{\Delta t}T}-1}{C_{\Delta t}\Delta t_{\tmax}}.
\end{equation*}
Lastly, inserting this into the upper bound for $\err _{k+1} ^\bfS$ yields
\begin{equation*}
\left\|\err_{k+1}^\bfS\right\|\le \frac{e^{\Xi C_{\Delta t}T}-1}{C_{\Delta t}\Delta t_{\tmax}}\cdot C\Delta t_{\tmax}^3=\frac{C(e^{\Xi C_{\Delta t}T}-1)}{C_{\Delta t}}\Delta t_{\tmax}^2.
\end{equation*}
Since $C_{\Delta t}=\mathcal{O}(1)$, this proves the claimed bound \eqref{errSOrder}.

\end{proof}

\begin{remark}
    Collecting all the error estimates determined in the proofs of Lemmas~\ref{lemma:Jacobians}-\ref{lemma:errJOrder} and the proof of Theorem~\ref{thm:error}, we obtain that the constant inside the $\mathcal{O}$ in \eqref{errSOrder} in Theorem \ref{thm:error} is 
    \begin{align*}
        \frac{\left(e^{\Xi C_{\Delta t}T}-1\right)}{C_{\Delta t}}\cdot C,
    \end{align*}
    with $\Xi, C_{\Delta t}, T$ and $C$ defined above.

    Finally, Corollary~\eqref{cor:errorPBSR} is a direct consequence of the following lemma, which shows that the error in the linearly interpolated numerical ODE solution $\hat\bfx$ is $\mathcal{O}(\Delta t_{\tmax}^2)$. This is the same order that we assume for $\err^{\text{ODE}}_k$ in Assumption~\ref{as:errorODE}. As a consequence, Lemmas~\ref{lemma:Jacobians}-\ref{lemma:errJOrder} continue to hold, and therefore the same proof as Theorem~\ref{thm:error} can be applied also when a refinement of the time intervals is performed and the ODE solution $\hat\bfx$ is linearly interpolated.
    
\begin{lemma}
\label{lem:interpError}
Assume that the map $t\mapsto \frac{d}{dt}f(t,\bfx,\bfp)$ is bounded with respect to $\| \cdot \| _{[0,T]}$. For $i\in\{1,\dots,n_{\text{int}}\}$ let $t_{k,i}=t_k+\frac{i-1}{n_{\text{int}}}\Delta t_{k}$ and
\begin{align*}
    \hat\bfx_{k,i}=\hat\bfx_k+\frac{i-1}{n_{\text{int}}}(\hat\bfx_{k+1}-\hat\bfx_k).
\end{align*}
Let 
\begin{align*}
    \err_{k,i}^{\text{ODE}}=\bfx(t_{k,i})-\hat\bfx_{k,i}
\end{align*}
be the error of approximating the exact ODE solution $\bfx$ at time $t_{k,i}$ with the interpolation $\hat\bfx_{k,i}$ of the ODE numerical solution, $\{\hat\bfx_k\}_{k=0}^K$. Under Assumption~\ref{as:errorODE},
\begin{align*}
    \|\err_{k,i}^{\text{ODE}}\|=\mathcal{O}(\Delta t_\tmax^2).
\end{align*}
\end{lemma}
\begin{proof}
Let 
\begin{align*}
    \bfx^{\text{interp}}_{k,i} = \bfx(t_k)+\frac{i-1}{n_{\text{int}}}(\bfx(t_{k+1})-\bfx(t_k))
\end{align*}
be the interpolation of the exact ODE solution $\bfx$ at time $t_{k,i}$. Because $t\mapsto \frac{d}{dt}f(t,\bfx,\bfp)$, i.e. $t\mapsto \frac{d^2}{dt^2}\bfx(t)$, is bounded with respect to $\| \cdot \| _{[0,T]}$, the linear interpolation error satisfies
\begin{align*}
   \|\bfx(t_{k,i}) -  \bfx^{\text{interp}}_{k,i}\|=\mathcal{O}(\Delta t_k^2),
\end{align*} 
as showed in \cite[Chap.~5]{ScarboroughJamesBlaine1950Nma}. Observe that
    \begin{align*}
        \bfx^{\text{interp}}_{k,i}-\hat\bfx_{k,i}&=\bfx(t_k)+\frac{i-1}{n_{\text{int}}}(\bfx(t_{k+1})-\bfx(t_k)) - \hat\bfx_k-\frac{i-1}{n_{\text{int}}}(\hat\bfx_{k+1}-\hat\bfx_k)\\
        &=\err^{\text{ODE}}_k + \frac{i-1}{n_{\text{int}}}\left(\err^{\text{ODE}}_{k+1}-\err^{\text{ODE}}_{k}\right).
    \end{align*}
    Consequently,
    \begin{align*}
        \| \bfx^{\text{interp}}_{k,i}-\hat\bfx_{k,i}\|\le \|\err^{\text{ODE}}_k\|+ \frac{i-1}{n_{\text{int}}}\left(\|\err^{\text{ODE}}_{k+1}\|+\|\err^{\text{ODE}}_{k}\|\right)=\mathcal{O}(\Delta t_\tmax^2),
    \end{align*}
    by Assumption~\ref{as:errorODE}.
We conclude that 
\begin{align*}
     \|\err_{k,i}^{\text{ODE}}\|&=\|\bfx(t_{k,i})-\hat\bfx_{k,i}\|\le \|\bfx(t_{k,i})-\bfx^{\text{interp}}_{k,i}\|+\|\bfx^{\text{interp}}_{k,i}-\hat\bfx_{k,i}\|,
\end{align*}
where both error on the right hand side of the last inequality are of order $\mathcal{O}(\Delta t_\tmax^2)$, hence the result.
\end{proof}

\end{remark}

\section{Numerical results}
\label{sec:numerical}

In this section we complement the theoretical analysis in Sections~\ref{sec:genSolution}-\ref{sec:proofErrEst} with numerical experiments, illustrating the performance of the proposed sensitivity approximation methods in a number of examples. First, in Section~\ref{subsec:implementation} we describe some implementation details of the PBSR and Exp algorithms, first presented in Section~\ref{sec:stability}. Next, in Section \ref{sec:experiments} we outline how the accuracy of the different methods is evaluated and show numerical results for four examples: two biological models, referred to as PKA and CaMKII, a random linear system, and a model with a limit cycle (the Chua's circuit model).
The GitHub repository associated with this paper\footnote{\url{https://github.com/federicamilinanni/JuliaSensitivityApproximation}} presents more implementation details; another repository\footnote{\url{https://github.com/a-kramer/CSensApprox}} contains an alternative implementation in C of the methods described in this manuscript and sets up more examples.

\subsection{Implementation details}
\label{subsec:implementation}
To test the accuracy and run time of the sensitivity approximation methods proposed in this paper (described in Section~\ref{sec:stability}), we have implemented the methods in the Julia language. For this purpose, we used the Julia packages \texttt{DifferentialEquations.jl} and \texttt{Sundials.jl} to solve the $n_x-$dimensional ODE system for the state variable $\bfx$. In particular, we used the solver \texttt{CVODE\_BDF} \, from \texttt{Sundials.jl}, for which we set absolute and relative tolerances of 1e-6 and 1e-5, respectively. The ODE solver uses an adaptive time stepping algorithm, and therefore returns the approximate ODE solution $\{\hat\bfx_k\}$ on a non-uniform time grid $\{t_k\}$, where time steps $[t_k,t_{k+1}]$ can be arbitrarily large (within the set time span). 

As discussed in Section~\ref{sec:stability}, the size $\Delta t_k$ of the time intervals $[t_k,t_{k+1}]$ does not give rise to stability concerns in the case of the Exp algorithm (Algorithm~\ref{alg:ExpAlg}), as this method relies only on the exponential formula~\eqref{solutionAtEquilibrium}, which is stable for all choices of $\Delta t_k$. For the PBSR algorithm (Algorithm~ \ref{alg:PBSR}), in the case of stiff problems, the time intervals $[t_k,t_{k+1}]$ can be too large for the PBS formula~\eqref{eq:Approx}, causing stability issues. Therefore, the time intervals are refined into $n_\tint$ sub-intervals $\{[t_{k,i},t_{k,i+1}]\}_{i=1}^{n_\tint}$ of length $\Delta t_k/n_\tint$; note that we suppress the $n_\tint$s dependence on $k$ in the notation. To determine a good choice for the number of sub-intervals $n_{\tint}$, we consider some of the error terms in the PBS sensitivity approximation, analysed in Section~\ref{sec:proofErrEst}. In particular, a potentially problematic term is the error due to the truncation of the Peano-Baker series, i.e., the omission of $\sum_{n=3}^\infty \pbsI_n(t_{k,i+1};t_{k,i})$, on the sub-interval $[t_{k,i},t_{k,i+1}]$ (see \eqref{errPhi}). With majorations similar to \eqref{majorationNormPBS}, we obtain the upper bound
\begin{equation*}
  \left\|  \sum_{n=3}^\infty\pbsI(t_{k,i+1};t_{k,i})\right\| \le \sum_{n=3}^\infty\frac{1}{n!} \left(\frac{\Delta t_{k}}{n_\tint} C\right)^n=e^{\frac{\Delta t_{k}}{n_\tint} C}-1-\left(\frac{\Delta t_{k}}{n_\tint} C\right)-\frac{1}{2}{\left(\frac{\Delta t_{k}}{n_\tint} C\right)^2},
\end{equation*}
where $C=\|\nabla_xf\|_{[t_{k,i},t_{k,i+1}]}$. One way to ensure that this value is sufficiently small is to require that $\frac{\Delta t_k}{n_{\tint}}C \lesssim 0.1$. Based on this, the choice for the number of sub-intervals becomes $n_{\tint} = \lceil 10\Delta t_k \|\nabla_xf(\hat{\bfx}_k)\| \rceil$.

Refining the time grid increases the running time of the algorithm, in particular when the time span of the whole simulation is of the order of hundreds or thousands time units, as in the examples considered in the following subsections. To make the algorithm more efficient, on time intervals $[t_k,t_{k+1}]$ where the proposed $n_\tint$ is deemed too large, we apply the exponential formula~\eqref{solutionAtEquilibrium} instead of refining and using the PBS formula~\eqref{eq:Approx}. As outlined in Section~\ref{sec:stability}, this means that the exponential formula is used in two situations:
for time intervals $[t_k,t_{k+1}]$ for which (i) $n_\tint$ is ``too large'', or (ii) $\nabla_xf$ and $\nabla_pf$ are approximately constant. In our implementation of the PBSR algorithm we consider $n_\tint$ ``too large'' when it exceeds the value $100$, and we treat the Jacobians $\nabla_xf$ and $\nabla_pf$ as constant when 
\begin{align*}
    \frac{\|\nabla_x\hat f_{k+1}-\nabla_x\hat f_k\|}{\|\nabla_x\hat f_k\|}< 10^{-4}\quad\text{and}\quad\frac{\|\nabla_p\hat f_{k+1}-\nabla_p\hat f_{k}\|}{\|\nabla_p\hat f_k\|}< 10^{-4},
\end{align*}
respectively. 
Combining the two criteria, the \texttt{if}-statement in Algorithm~\ref{alg:PBSR} that triggers the use of the exponential formula~\eqref{solutionAtEquilibrium} is implemented as:
\begin{equation}
    \label{eq:ifStat}
    \begin{split}
        \texttt{if}&\quad n_{\tint}>100\quad \texttt{OR}\\
    &\frac{\|\nabla_x\hat f_{k+1}-\nabla_x\hat f_k\|}{\|\nabla_x\hat f_k\|}< 10^{-4} \quad\texttt{AND}\quad \frac{\|\nabla_p\hat f_{k+1}-\nabla_p\hat f_{k}\|}{\|\nabla_p\hat f_k\|}< 10^{-4}.
    \end{split}
\end{equation}

The exponential formula~\eqref{solutionAtEquilibrium} requires an implementation of matrix exponentiation. Each programming language offers a built in\footnote{Alternatively, a linear algebra package.}
  function for this, typically named \texttt{expm}
  (we use \texttt{Base.exp()} in Julia). Such methods often rely on the scaling and squaring method; Julia specifically cites \cite{Higham2009}. An unstable/bad
  implementation of \texttt{expm} could lead to large numerical
  inaccuracies, however this is not something we have observed in practice.

  Lastly, in the numerical examples we also implement the Forward Sensitivity method to approximate the sensitivity matrix. For this purpose we use the Julia package \texttt{DiffEqSensitivity.jl}.

\subsection{Simulation experiments}
\label{sec:experiments}

In the numerical experiments that follow, we evaluate the accuracy and speed of the Exp and PBSR algorithms (Algorithms~\ref{alg:PBSR} and \ref{alg:ExpAlg}). The accuracy is measured in two different ways: one is to compare the two algorithms with the FS method, which is considered to have a high accuracy, and one is to consider minor perturbations, with respect to the parameter vector, of the ODE solution.

As our first performance measure, after solving the ODE system~\eqref{eq:ODE} for $\bfx$, we consider the time points $t_k, k=0,\dots,K$, returned by the ODE solver, and for each time point $t_k$ compute approximations of the sensitivity matrix $\bfS$ using the PBSR and Exp algorithms: $\hat{\bfS}^{\text{PBSR}}_k$ (PBSR), and $\hat{\bfS}^{\text{Exp}}_k$ (Exp). Besides, we apply the FS method to obtain the FS approximation of the sensitivity matrix. The time steps used by the FS method are in general different from those returned by the ODE solver mentioned above; therefore, we linearly interpolate the FS sensitivity matrix approximation in the same time points $\{t_k\}$ used for the PBSR and Exp algorithms, thus obtaining the approximation $\{\hat \bfS_k^{\text{FS}}\}$.

Next, for each time point $t_k$, we compute the scalar relative error of the PBSR ($\rerr^{\text{PBSR}}_k$) and Exp ($\rerr^{\text{Exp}}_k$) algorithms with respect to the FS method:
\begin{equation*}
  \rerr^{\text{PBSR}}_k = \frac{\|\hat{\bfS}^{\text{PBSR}}_k-\hat{\bfS}^{\text{FS}}_k\|}{\|\hat{\bfS}^{\text{FS}}_k\|}, \quad\quad \rerr^{\text{Exp}}_k = \frac{\|\hat{\bfS}^{\text{Exp}}_k-\hat{\bfS}^{\text{FS}}_k\|}{\|\hat{\bfS}^{\text{FS}}_k\|}\,.
\end{equation*}

The method just outlined treats $\hat{\bfS}^{\text{FS}}_k$, obtained using the FS method, as the true sensitivity matrix at time $t_k$. However, the FS method also yields an approximation, and thus $\hat{\bfS}^{\text{FS}}_k$ comes with an error. We therefore introduce an alternative, in a sense more objective, way to evaluate the accuracy of the proposed methods. The idea is to use the sensitivity matrix to approximate the solution of the underlying ODE model after making a small perturbation $\delta_{\bfp}$ in the parameter vector $\bfp$. Consider making such a perturbation by either adding or subtracting (an appropriate) $\delta _\bfp$ from $\bfp$: from a Taylor expansion, we have,
\begin{align}
  \bfx(t,\bfp+\delta_{\bfp}) &= \bfx(t,\bfp) + \bfS(t,\bfx,\bfp) \cdot \delta_{\bfp} + \frac{1}{2}\delta_{\bfp}^{\tT}\cdot\mathbb{S}(t,\bfx,\bfp)\cdot\delta_{\bfp} + \mathcal{O}(\delta_{\bfp}^3)\,, \label{eq:approx1}\\
  \bfx(t,\bfp-\delta_{\bfp}) &= \bfx(t,\bfp) - \bfS(t,\bfx,\bfp) \cdot \delta_{\bfp} + \frac{1}{2}\delta_{\bfp}^{\tT}\cdot\mathbb{S}(t,\bfx,\bfp)\cdot\delta_{\bfp} + \mathcal{O}(\delta_{\bfp}^3)\,, \label{eq:approx2}
\end{align} 
where $\mathbb{S}$ is the second order sensitivity tensor (i.e. the Hessian of $\bfx(t,\bfp)$, with respect to $\bfp$). Subtracting \eqref{eq:approx2} from \eqref{eq:approx1} leads to a natural criterion to judge how well the sensitivity works as a linear approximation of the solution with respect to parameter changes,
\begin{equation}
  \label{eq:criterion}
    \bfx(t,\bfp+\delta_{\bfp}) - \bfx(t,\bfp-\delta_{\bfp}) = 2 \bfS(t,\bfx,\bfp) \cdot \delta_{\bfp}  + \mathcal{O}(\delta_{\bfp}^3) \,.
\end{equation}
For an accurate approximation $\hat \bfS$ of $\bfS$, the difference between the left-hand side and the right-hand side of \eqref{eq:criterion}, with $\bfS$ replaced by $\hat \bfS$, should be small and this therefore works as a performance measure. In line with this, for an approximation $\hat \bfS$, we define, for a given $\delta_{\bfp}$, the following (objective) error estimate:
\begin{equation}
  \label{eq:obj-rel-error}
  \ObjRelErr_{\delta_{\bfp}}=\frac{\|\bfx(t,\bfp+\delta_{\bfp}) - \bfx(t,\bfp-\delta_{\bfp}) - 2 \hat \bfS(t,\bfx,\bfp) \cdot \delta_{\bfp}\|}{\|\epsilon + \bfx(t,\bfp+\delta_{\bfp}) - \bfx(t,\bfp-\delta_{\bfp})\|} \,,
\end{equation}
where $\epsilon$ is a small quantity, used to regularize the fraction in the cases where the denominator in \eqref{eq:obj-rel-error} is vanishingly small\footnote{Initially, at $t_0$, the sensitivity is exactly $0$ in all components, thus $\bfx(t_0,\bfp+\delta_{\bfp})=\bfx(t_0,\bfp-\delta_{\bfp})$.}. In practice, we pick several random $\delta_{\bfp}$ values and average the resulting errors $\ObjRelErr_{\delta_{\bfp}}$:

  \begin{equation}
    \label{eq:averageRelError}
    \ObjRelErr = \frac{1}{N}\sum_{i=1}^N \ObjRelErr_{\delta_\bfp^i}\,,\quad\delta_{\bfp}^i = \mathbf{h}^i \odot \bfp\,,
  \end{equation}
  where $\odot$ denotes point-wise multiplication, or Hadamard product, between the vectors $\mathbf{h}^i$ and $\bfp$ (both of dimension $n_p$), and for each $i=1,\dots,N$, $\mathbf{h}^i$ is a random vector with components
  \begin{equation*}
      h^i_j\sim\mathcal{U}(10^{-5},10^{-4}),\quad j=1,\dots,n_p.
  \end{equation*}
With this definition, the perturbations $\delta_\bfp^i$ are small compared to $\bfp$, in the sense that each component $\delta_{\bfp,j}$ satisfies $ \vert \delta _{\bfp,j} \vert /p_j\ll 1 $, $j=1,\dots,n_p$.
  
    In each of the examples in the following subsections, the error estimate $\ObjRelErr$ is computed for all the three methods (PBSR, Exp and FS) at each time $t_k$, denoted by $\ObjRelErr_k^{\text{PBSR}}$, $\ObjRelErr_k^{\text{Exp}}$ and $\ObjRelErr_k^{\text{FS}}$. Note that there is some arbitrariness in the chosen distribution for $\delta_\bfp$.
    
  A benefit of this alternative, seemingly more objective, performance measure is that it provides an estimate also of the error in the FS approximation of the sensitivity matrix, and removes the reliance on any other numerical approximation in evaluating the accuracy of the PBSR and the Exp algorithms.

\subsubsection*{Models for molecular signaling pathways: PKA and CaMKII models}
The first numerical experiments highlight the original motivation of considering models from systems biology (see Section~\ref{sec:intro}): in this section we apply the PBSR Algorithm~\ref{alg:PBSR} and the Exp Algorithm~\ref{alg:ExpAlg} to two models that describe molecular signaling pathways within neurons involved in learning and memory. In particular, these mechanisms are involved in the strengthening or weakening of neuron synapses, referred to as long term potentiation (LTP) and long term depression (LTD), respectively.

A crucial role in signaling pathways is played by protein kinases and phosphatases, thanks to their ability to, respectively, phosphorylate and dephosphorylate substrate proteins. In the two models that we consider, the phosphorylating role is performed by the cAMP-dependent protein kinase A (PKA) and the Ca$^{2+}$/calmodulin-dependent protein kinase II (CaMKII), which give the two models their names: PKA and CaMKII—for more details see references \cite{Church2021.03.14.435320, NAIR2014277, Eriksson19}.

In the two ODE models, the state vectors $\bfx(t)$ represent the concentrations of the different forms of the modelled proteins and ions at time $t$; the dimension of the state space is $n_x=11$ in the PKA model, and $n_x=21$ in the CaMKII model. In both models the parameter vector $\bfp$ corresponds to the kinetic constants that characterize the reactions in the underlying pathway. The dimension of the parameter space is $n_p=35$ in the PKA, and $n_p=59$ in the CaMKII model.

In Figures~\ref{fig:PKA} and \ref{fig:CaMKII} we show the relative errors $\rerr^{\text{PBSR}}$ (solid line) and $\rerr^{\text{Exp}}$ (dashed line) against the time step for the PKA and CaMKII model, respectively. The small gray circles and red triangles superimposed on the solid line (PBSR algorithm) refer to the time steps at which the \texttt{if}-statement is satisfied and the exponential formula~\eqref{solutionAtEquilibrium} is used: gray circles correspond to time steps where the use of the exponential formula~\eqref{solutionAtEquilibrium} is triggered because the system is approximately at equilibrium, while red triangles indicate time steps where \eqref{solutionAtEquilibrium} is used because the estimated value of $n_\tint$ is too large. We observe that the results obtained by the PBSR algorithm are $1$ to $2$ order of magnitude more accurate than the Exp algorithm at the time steps where the PBSR formula~\eqref{eq:Approx} is used (when FS is considered as providing the true sensitivity matrix).
\begin{figure}[h]
    \centering
    \includegraphics[width=\linewidth]{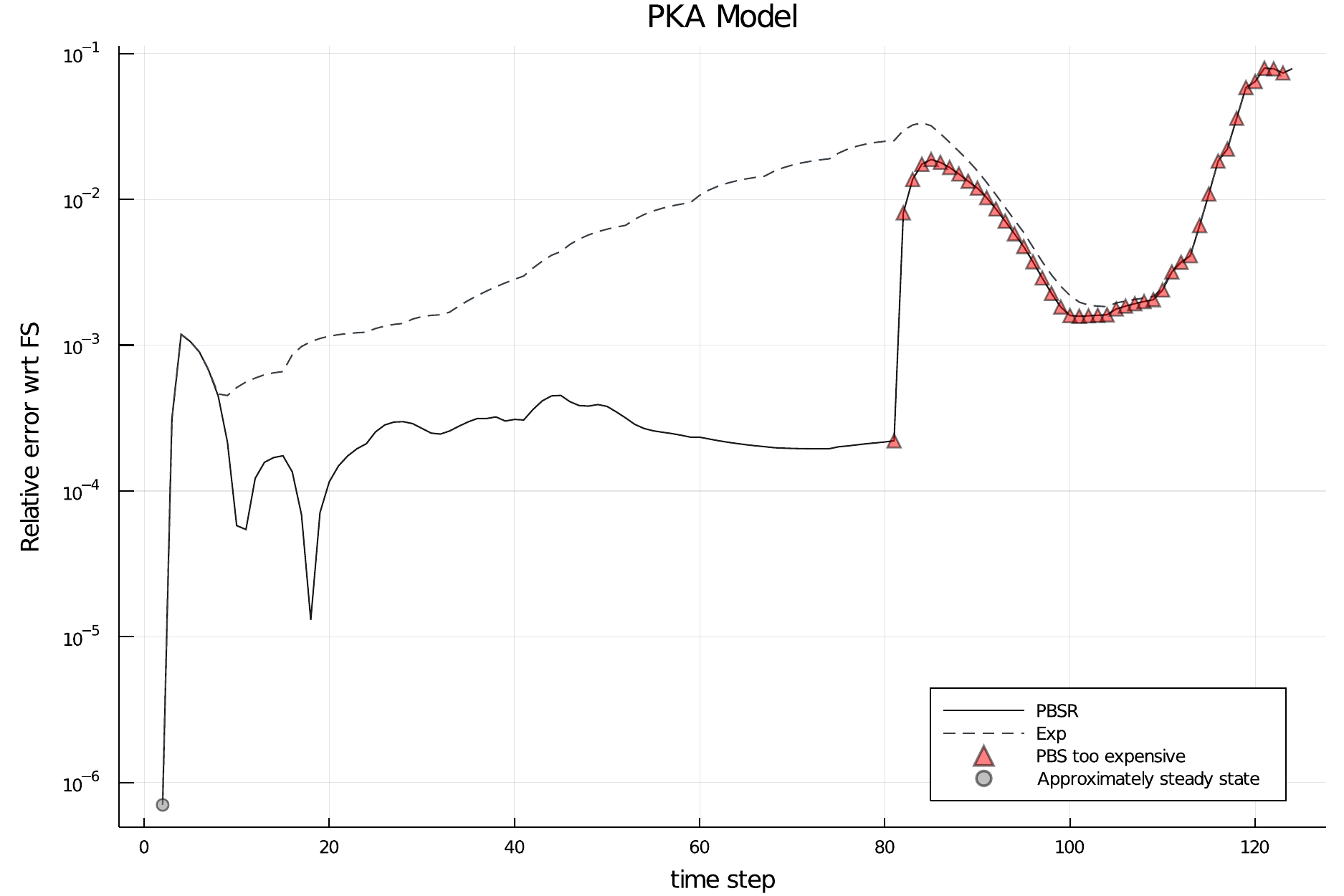}
    \caption{Relative error of the sensitivity matrix, against time step, for the PBSR (solid line) and the Exp (dashed line) algorithms for the PKA model. The simulations are run over the time span $[0,600]$. The gray circles superimposed on the solid line indicate the time steps at which within the PBSR algorithm  the exponential formula~\eqref{solutionAtEquilibrium} is used instead of the PBS formula~\eqref{eq:Approx} because the system is approximately at equilibrium; the small red triangles show when the exponential formula~\eqref{solutionAtEquilibrium} is used instead of the PBS formula~\eqref{eq:Approx} because the number of sub-intervals $n_{int}$ in the PBSR refinement is too large.}
    \label{fig:PKA}
\end{figure}

\begin{figure}[h]
    \centering
    \includegraphics[width=\linewidth]{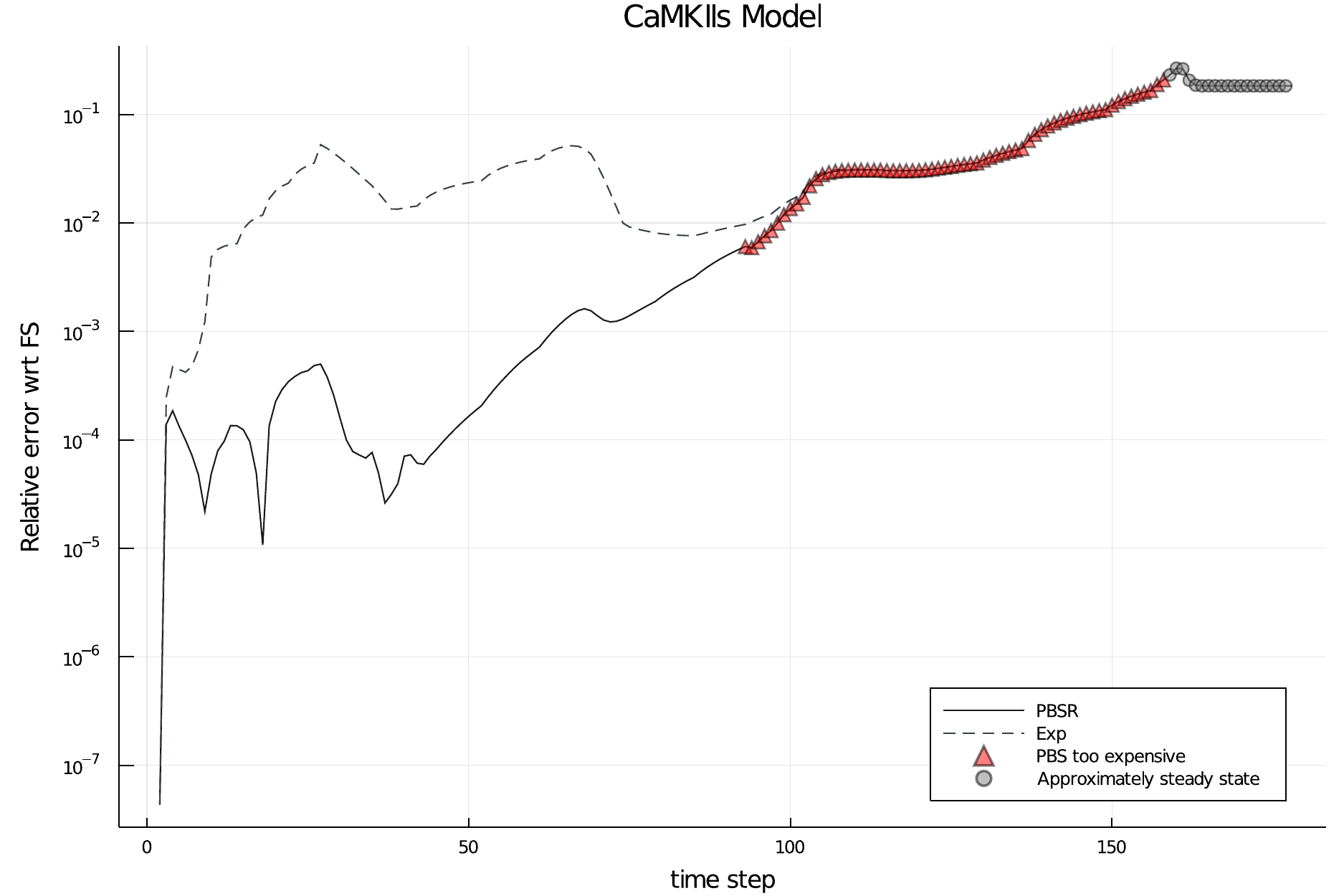}
    \caption{Relative error of the sensitivity matrix, against time step, for the PBSR (solid line) and the Exp (dashed line) algorithms for the CaMKII model. The simulations are run over the time span $[0,600]$. The gray circles superimposed on the solid line indicate the time steps at which within the PBSR algorithm the exponential formula~\eqref{solutionAtEquilibrium} is used instead of the PBS formula~\eqref{eq:Approx} because the system is approximately at equilibrium; the small red triangles show when the exponential formula~\eqref{solutionAtEquilibrium} is used instead of the PBS formula~\eqref{eq:Approx} because the number of sub-intervals $n_{int}$ in the PBSR refinement is too large.}
    \label{fig:CaMKII}
\end{figure}

Figures~\ref{fig:ObjRelErrPKA} and~\ref{fig:ObjRelErrCaMKIIs} show the objective errors $\ObjRelErr$, defined in~\eqref{eq:averageRelError}, with $N=100$. 
The blue and red shaded areas indicate simulation time points where the use of the exponential formula~\eqref{solutionAtEquilibrium} is triggered in the PBSR algorithm: blue corresponds to simulation time points where the system is approximately in equilibrium, and red indicates time points where the estimated number of sub-intervals $n_\tint$ for the refinement is too large. Note that in Figure~\ref{fig:PKA}, the PKA system is simulated in the time span $[0,600]$, during which it does not reach convergence (final time points are labelled with red triangles), whereas in Figure~\ref{fig:ObjRelErrPKA} we simulate the system over the time span $[0,60000]$, and the system then reaches equilibrium (final time points are depicted in a shaded blue area). 

We observe that in the first instants of the simulations, the errors $\ObjRelErr^{\text{PBSR}}$ and $\ObjRelErr^{\text{FS}}$ are about one order of magnitude lower than $\ObjRelErr^{\text{Exp}}$; after the initial phase, the errors in the three methods result to be close to each other, with $\ObjRelErr^{\text{Exp}}$ being slightly higher in the model of larger dimension (CaMKII), between $10^{-2}$ and $10^{-1}$. At the time points where the PBSR algorithm implements the exponential formula~\eqref{solutionAtEquilibrium} (red and blue regions in the Figures) the error $\ObjRelErr^{\text{PBSR}}$ is comparable with $\ObjRelErr^{\text{Exp}}$. Interestingly, we observe that in the larger model (CaMKII) the errors $\ObjRelErr^{\text{PBSR}}$ and $\ObjRelErr^{\text{Exp}}$ in the PBSR and Exp approximations decay to $10^{-5}$ when the system converges to equilibrium, whereas the error $\ObjRelErr^{\text{FS}}$ in the FS approximation remains of an order of $10^{-1}$. This explains why in Figure~\ref{fig:CaMKII} the errors $E^\text{PBSR}$ and $E^\text{Exp}$ in the PBSR and Exp approximations are large in the final time steps, even though the PBSR and Exp approximation have a sufficiently high accuracy: in fact, $E^\text{PBSR}$ and $E^\text{Exp}$ correspond to the relative errors of the PBSR and Exp sensitivity approximations with respect to the FS approximation, which in this case is much less accurate.

\begin{figure}[h]
  \centering
  \includegraphics[width=\textwidth]{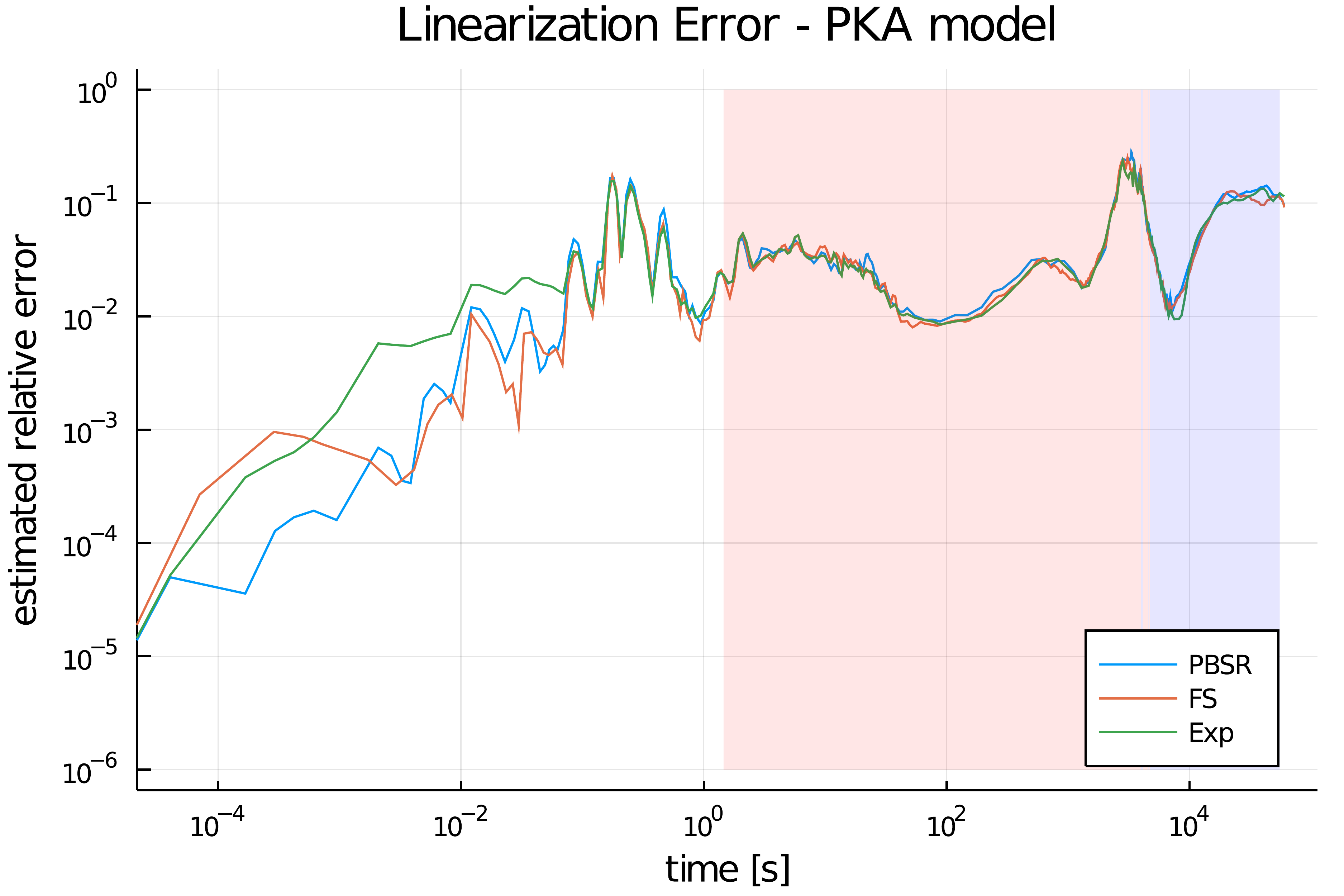}
  \caption{The estimated relative error $\ObjRelErr$ against time for the PKA
    model. The simulations are run over the time span $[0,60000]$. The blue and red shaded areas both indicate that within
    the PBSR method the exponential formula~\eqref{solutionAtEquilibrium} is used:
    \textcolor{blue}{\emph{blue}} because the system is close to a
    (stable) steady state, \textcolor{red}{\emph{red}} because
    refinement is numerically prohibitive. The error estimate $\ObjRelErr$ uses small parameter perturbations $\delta_{\bfp}$ to test how well the sensitivity matrix predicts the changes in the solution (averaged over $N=100$ small, random parameter perturbations).}
  \label{fig:ObjRelErrPKA}
\end{figure}

\begin{figure}
  \centering
  \includegraphics[width=\textwidth]{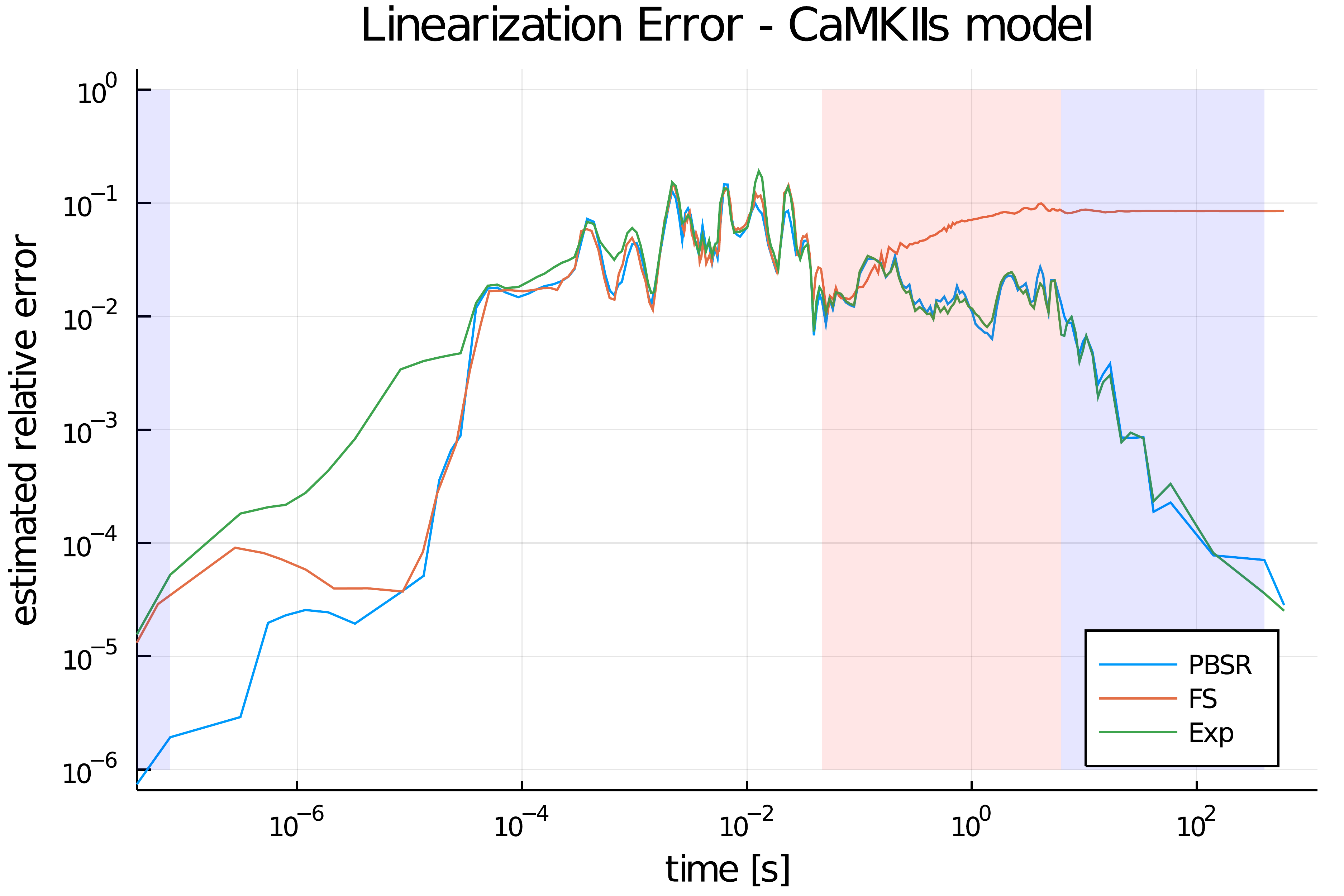}
  \caption{The estimated relative error $\ObjRelErr$ against time for the CaMKMIIs
    model. The simulations are run over the time span $[0,600]$. The blue and red shaded areas
both indicate that within the PBSR method the exponential formula~\eqref{solutionAtEquilibrium} is used: \textcolor{blue}{\emph{blue}} because the system is close to a (stable) steady state, \textcolor{red}{\emph{red}} because refinement is numerically
    prohibitive. The error estimate $\ObjRelErr$ uses small parameter perturbations $\delta_\bfp$ to test how well the sensitivity matrix predicts the changes in the solution (averaged over $N=100$ small, random parameter perturbations).}
  \label{fig:ObjRelErrCaMKIIs}
\end{figure}

As a final test, we compared the computational time for the different algorithms in the PKA and CaMKII models by computing the average runtime (averaged over 100 iterations) for the three methods; the results are presented in Table~\ref{tab:timingPKACaMKII}. As expected, the PBSR shows an increased computational cost compared to the Exp algorithm, whereas compared to the FS method, the PBSR algorithm is faster. The latter becomes more pronounced when the dimension of the system is rather high (CaMKII model vs.\ PKA model).

\begin{table}[ht]
\begin{center}
\begin{minipage}{174pt}
\caption{Average runtime (averaged over $100$ iterations) of FS, PBSR and Exp algorithms applied to the PKA and CaMKII models}
\label{tab:timingPKACaMKII}
\begin{tabular}{@{}llll@{}}
\toprule
  & PKA & CaMKII \\
  \midrule
 FS & $0.056\,s$ & $1.518\,s$ \\
 PBSR & $0.047\,s$ & $0.132\,s$ \\
 Exp & $0.008\,s$ & $0.030\,s$\\
 \botrule
\end{tabular}
\end{minipage}
\end{center}
\end{table}

\subsubsection*{Random linear system}
In order to numerically investigate the impact of an increase in the dimensions $n_x$ and $n_p$ of the underlying ODE system on the algorithms' computational cost, we considered a random linear system for which we can choose an arbitrary dimensions $n_x$; for convenience, we take this to be equal to the parameter space dimension $n_p$ and denote both with $n$.

The model is obtained by first generating a random matrix $\bfB$ of
dimension $n \times n$ and a random vector $\bfp$ of length $n$, both
with values uniformly distributed in the interval $[0,1]$. Next, we
define $\bfA:=-\bfB^\tT\bfB$, and let $\bfp^2$ be the element-wise square
of $\bfp$, and $\bfu$ an $n$-dimensional vector of ones. The associated (random) linear system is defined as
\begin{equation*}
  \dot{\bfx}(t)=\bfA\bfx+\bfp^2+\bfu\,,
\end{equation*}
and we use this system to test how the dimension of the system affects the runtime of the three algorithms (PBSR, Exp and FS).

In this example, the Jacobians $\nabla _x f$ and $\nabla _p f$ are, by construction, constant with respect to the state $\bfx$ and time--$\nabla _x f \equiv \bfA$ and $\nabla _p f = 2\bfp$--and the \texttt{if}-statement \eqref{eq:ifStat} in Algorithm~\ref{alg:PBSR} is always true. Thus, in the PBSR algorithm, there would never be a refinement of the time intervals $[t_k;t_{k+1}]$ and the exponential formula~\eqref{solutionAtEquilibrium} would always be applied, leading to the same output as the Exp algorithm. Therefore, to test the approximation based on the Peano-Baker series, we remove the \texttt{if}-statement and always apply the PBS formula~\eqref{eq:Approx}, including the refinement of the time grid, to approximate the state-transition matrix and compute $\bfS$.

We performed tests of the three algorithms on this type of random linear system of dimensions $n=5,10,\dots,95,100$. In Figure~\ref{fig:TimevsSize} we show the average runtime (over $10$ iterations) of the FS algorithm (solid line), the PBSR algorithm (dashed line) and Exp (dotted line).
To determine how the runtime scales with the dimension $n$ of the system, we perform regressions on the form $\text{runtime} = a\cdot\text{dimension}^b$, using runtime data for each of the three algorithms. The results suggest a runtime of $\mathcal{O}(n^{2.1})$ for the Exp algorithm, $\mathcal{O}(n^{3.7})$ for the PBSR and $\mathcal{O}(n^{4.2})$ for the FS, indicating that the PBSR algorithm gives an improvement of approximately $n ^{0.5}$ compared to the FS method for this particular system.

\begin{figure}
    \centering
    \includegraphics[width=\linewidth]{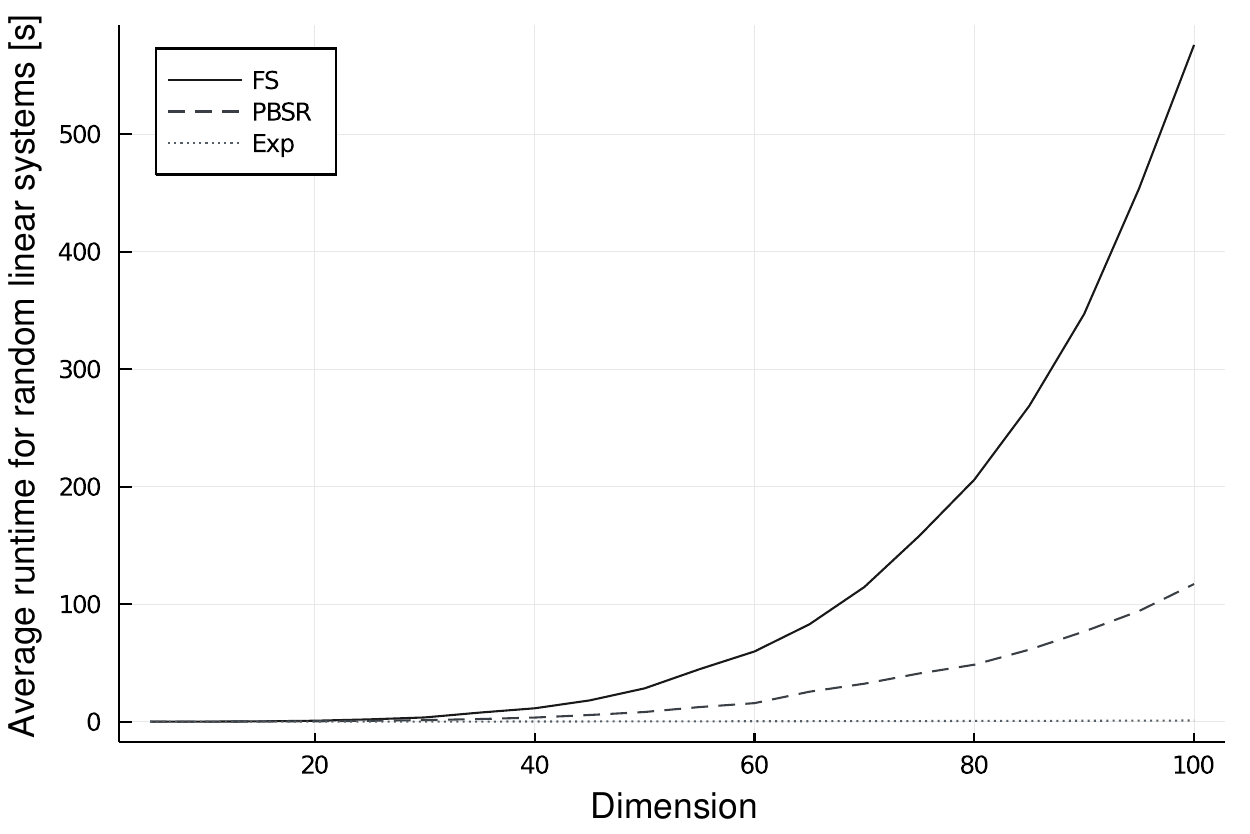}
    \caption{Average running time (over 10 iterations) of the FS (solid line), PBSR (dashed line) and Exp (dotted line) algorithms applied to random linear systems of dimension $5,10,\dots,100$}
    \label{fig:TimevsSize}
\end{figure}

\subsubsection*{Example with a limit cycle: Chua's circuit model.}
As our last example, we 
consider the following three-dimensional dynamical system,
\begin{equation*}
    \begin{cases}
    \dot{x}_1 = p_1(x_2 - x_1 - f(x)),\\
    \dot{x}_2 = x_1 - x_2 + x_3,\\
    \dot{x}_3 = -p_2x_2,
    \end{cases}
\end{equation*}
with $f(x) = -8/7x_1+4/63 x_1^3$, parameters $p_1 = 7$, $p_2 = 15$, and initial condition $\bfx_0 = (0,0,-0.1)^\tT$.
This system models Chua's circuit, an electrical circuit consisting of two capacitors and an inductor, and the choice of parameters and initial condition causes the system to converge to a limit cycle.

In this example, the Jacobians $\nabla_xf$ and $\nabla_pf$ exhibit high variability within time steps $[t_k,t_{k+1}]$. Because the PBS formula~\eqref{eq:Approx} can capture such variations, the PBSR algorithm (Algorithm~\ref{alg:PBSR}) provides an approximation of the sensitivity matrix that is roughly two orders of magnitude more accurate than the Exp algorithm (Algorithm~\ref{alg:ExpAlg}); this is according to both types of error estimates, comparing $\rerr^{\text{PBSR}}_k$ and $\rerr^{\text{Exp}}_k$ (see Figure~\ref{fig:Chua}), and $\ObjRelErr^{\text{PBSR}}_k$ and $\ObjRelErr^{\text{Exp}}_k$ (see Figure~\ref{fig:ObjRelErrChua}), respectively. Moreover, in Figure~\ref{fig:ObjRelErrChua} we note that aside from the initial phase, the FS method has an error that is essentially equivalent to that of the PBSR.

\begin{figure}
    \centering
    \includegraphics[width=\linewidth]{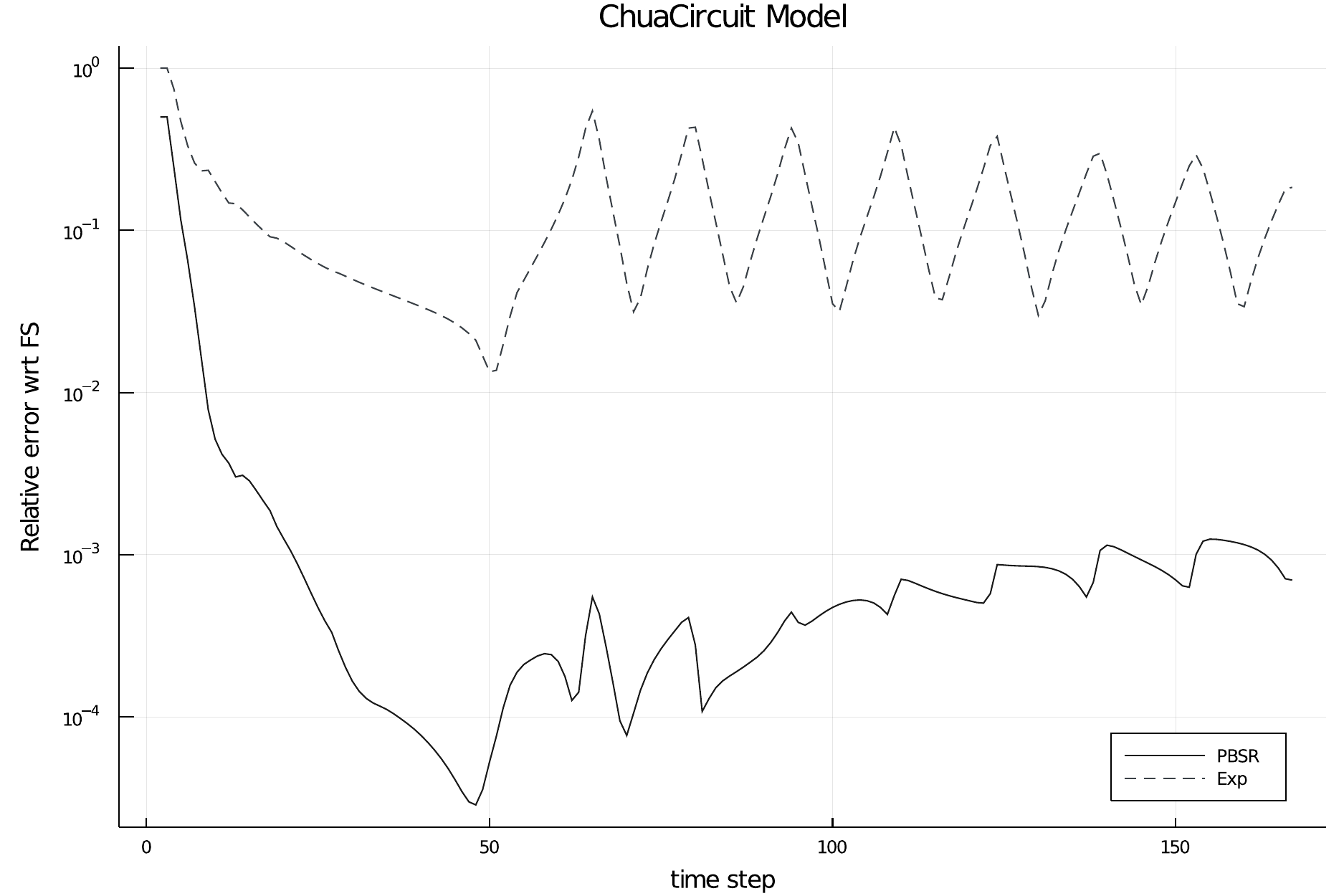}
    \caption{Relative error of the sensitivity matrix, against time step, for the PBSR (solid line) and the Exp (dashed line) algorithms for the Chua's system. The simulations are run over the time span $[0,10]$. 
    }
    \label{fig:Chua}
\end{figure}

\begin{figure}
  \centering
  \includegraphics[width=\textwidth]{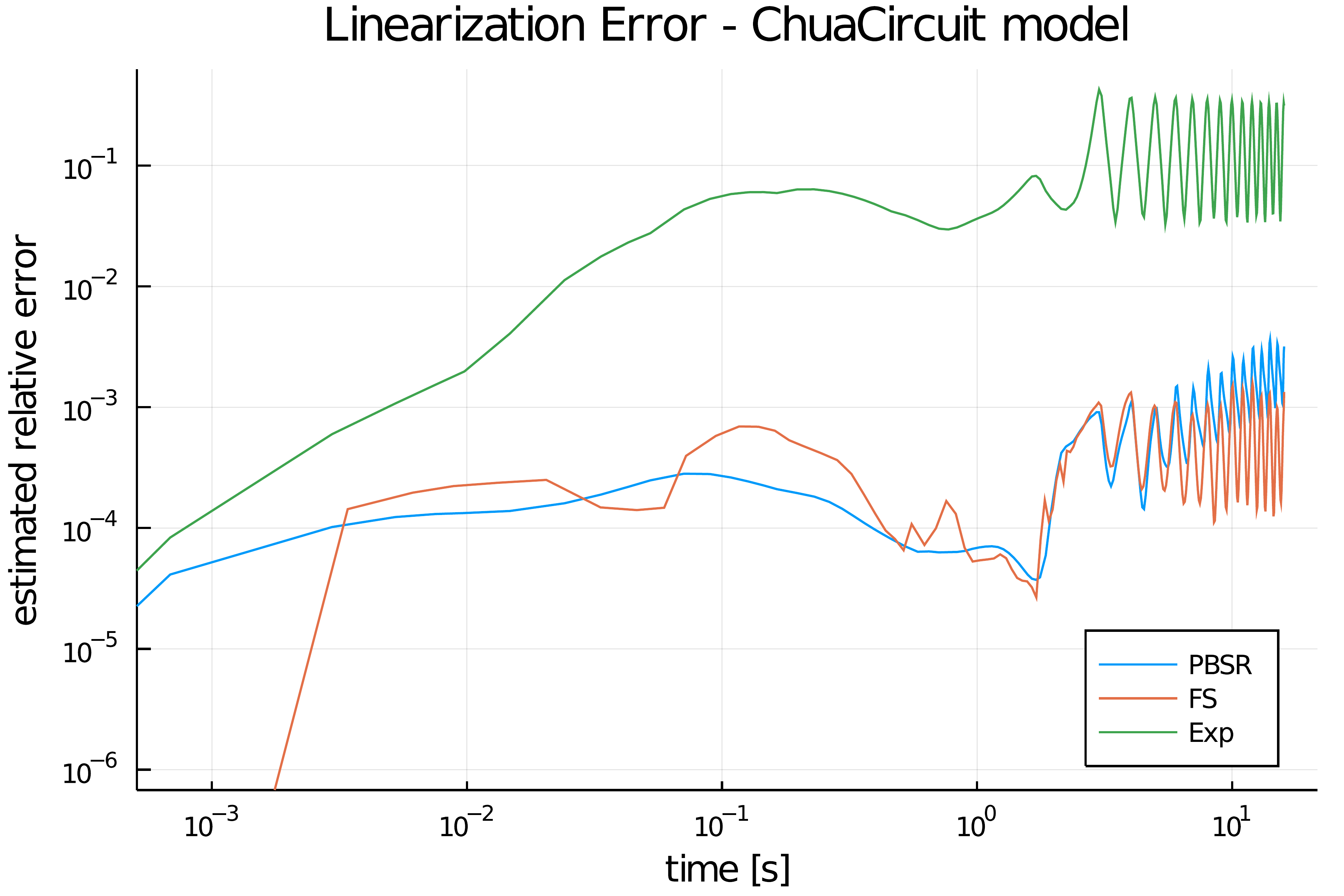}
  \caption{The estimated relative error $\ObjRelErr$ for the Chua's
    Circuit model. The error estimate $\ObjRelErr$ uses small
    parameter perturbations $\delta_{\bfp}$ to test how well the sensitivity
    matrix predicts the changes in the solution (averaged over $N=100$
    small, random parameter perturbations).}
  \label{fig:ObjRelErrChua}
\end{figure}

\section{Conclusion and Future Work}
\label{sec:conclusion}
In this paper we have addressed the problem of computing the sensitivity matrix of parameter-dependent ODE models in high-dimensional settings, where the forward and adjoint methods become too slow for practical purposes. This situation arises in, e.g., uncertainty quantification using Bayesian methods, where there can be a need to compute the sensitivity matrix at a large number of time steps and parameter values, coupled with a high-dimensional parameter space.

We developed a new numerical method based on the Peano-Baker series from control theory, which appears in the exact solution of $\bfS$ (Theorem \ref{thm:generalODESol}). By truncating the series and applying the trapezoidal rule for the integrals involved in the exact formula, we constructed an approximation to $\bfS$ that is suitable for numerical computation, referred to here as the \textit{PBS formula}~\eqref{eq:Approx}. In addition to the general representation of $\bfS$, given in Theorem \ref{thm:generalODESol}, in Corollary~\ref{cor:Sequi} we derived a simplified expression for the solution $\bfS$ in the setting of constant coefficients and forcing term in the ODE for $\bfS$. This led to a second formula to approximate the sensitivity matrix $\bfS$—the \textit{exponential formula}~\eqref{solutionAtEquilibrium}—which is exact when the system is at equilibrium, and a good approximation when the vector field of the ODE system has almost-constant Jacobians $\nabla _x f$ and $\nabla_pf$. Combining these two formulas, we defined the \textit{Peano-Baker series (PBS) algorithm} in Section~\ref{sec:genSolution}. 

As discussed in Section~\ref{sec:stability}, the PBS algorithm may suffer from stability issues, in particular for stiff problems. To overcome this, we define two additional algorithms: the \textit{PBS with refinement (PBSR) algorithm}, where we implement a refinement of the time grid, along with what can be viewed as a stiffness detection mechanism, and the \textit{Exponential (Exp) algorithm}, where only the exponential formula is used (and no refinement is performed). For applications, such as the type of uncertainty quantification and MCMC sampling that originally motivated this work (see Section \ref{subsec:motivation}), these are the two algorithms we propose to use. 

A rigorous error analysis, carried out in Sections~\ref{sec:errEst}-\ref{sec:proofErrEst}, showed that, under standard regularity assumptions, the proposed algorithm, based on the Peano-Baker series (with or without refinement of the time grid), admits a global error of order $\mathcal{O}(\Delta t_{\tmax} ^2)$. The analysis also showed that the proposed method is optimal in the sense of at what term the Peano-Baker series is truncated.

The theoretical error analysis was complemented by several numerical experiments in Section~\ref{sec:numerical}. We compared the performance of the different methods to that of the forward sensitivity method for two ODE models from systems biology, a random linear system and a system modelling Chua's circuit. The results showed that both our algorithms produced accurate approximations of the sensitivity matrix with significant speed-up, that seemed to increase rapidly with the dimension of the problem. In the dynamical system modeling Chua's circuit, the limit trajectory of the ODE was a limit cycle and thus the Jacobians never become (close to) constant. In this example, the PBSR algorithm produced approximations that had an accuracy of two orders of magnitude better than that of the Exp algorithm, motivating the use of the PBSR algorithm for accuracy in general problems. Further motivation for the PBSR comes from the applications in neuroscience that we considered, where ODE models are commonly characterised by time-dependent inputs (e.g. so-called Ca-spike trains). In such systems, the framework is similar to the Chua's circuit example, where the Jacobians are time-variant. Therefore, the PBSR algorithm is expected to be more precise, and thus more useful, than the Exp algorithm.

The methods presented in this paper are expected to be useful in the context of MCMC methods, particularly for problems arising in, e.g., systems biology: the speed-up provided by the PBSR and Exp algorithms, compared to forward sensitivity analysis, has the potential to drastically increase the efficiency of MCMC methods. As a first test of this, we equipped an implementation (in C, using the CVODES solver) of the simplified manifold Metropolis-adjusted Langevin algorithm (\textsc{smmala}, see, e.g., \cite{GiroCal}) with the Exp algorithm to compute the Fisher Information and posterior gradients, and compared it to \textsc{smmala} with conventional forward sensitivity analysis. The (real) data used for MCMC was obtained at or near the steady-state of the system, so the use of the Exp algorithm was suitable. The near-steady-state sensitivity approximation enabled sampling approximately $100$ times faster from the posterior distribution, compared to using CVODES' forward sensitivity analysis\footnote{With large samples sizes, a speed-up of that magnitude is difficult to test thoroughly, due to the time required by the slower sampler.}. Similar tests for the PBSR algorithm will be part of future work on the integration of the proposed methods in the MCMC setting.

In addition to applications to MCMC sampling, future work includes further investigation of the impact of different implementation aspects of the PBSR algorithm—e.g.\ the criteria (based on the Jacobians $\nabla_x f$ and $\nabla_pf$) to switch from PBSR to exponential formula, and the refinement of the time grid of the ODE solver. Additional comparisons with existing methods and a better understanding of the methods performance, particularly in large-scale systems, is another important direction.

\backmatter

\bmhead{Supplementary information}

\bmhead{Acknowledgments}
We sincerely thank the referees for their thorough and insightful feedback regarding the first version of the paper. Their comments and questions have lead to significant improvements, both in terms of presentation and content. The list of changes inspired by their reports is too long to include here, however we highlight the discussion about stability issues throughout the paper, and the error analysis covering the case with refined time intervals (see Corollary \ref{cor:errorPBSR}); both additions are direct results of the feedback from the referees. %

The research was supported by Swedish e-Science Research Centre (SeRC) and Science for Life Laboratory. The research of AK and OE was supported by EU/Horizon 2020 no. 945539 (HBP SGA3). The research of AK was also supported by the EU/Horizon 2020 no. 101147319 (EBRAINS 2.0 Project), European Union's Research and Innovation Program Horizon Europe no 101137289 (the Virtual Brain
Twin Project) and the Swedish Research Council (VR-M-2020-01652). The research of PN was supported in part by the Swedish Research Council (VR-2018-07050, VR-2023-03484) and the Wallenberg AI, Autonomous Systems and Software Program (WASP) funded by the Knut and Alice Wallenberg Foundation.

The computations were enabled by resources provided by the Swedish National Infrastructure for Computing (NAISS) at the PDC Center for High Performance Computing, KTH Royal Institute of Technology, partially funded by the Swedish Research Council through grant agreement no. 2022-06725 and by LUNARC, The Centre for Scientific and Technical Computing at Lund University.

\bmhead{Authors contribution}
AK, FM, and PN conceived the study. OE and AK contributed with the application. AK developed the exponential algorithm and FM the PBS algorithm. FM performed, and PN supervised the theoretical analysis. AK and FM performed the numerical experiments. FM, AK and PN wrote the manuscript with contributions from OE. All authors approved the final manuscript.

\section*{Declarations}
\bmhead{Statements and Declarations} 

 The authors declare no conflicts of interest. The funding is disclosed in the Acknowledgment section. This numerical study uses free software, the code and numerical results have been made available through publicly accessible repositories.

 \textbf{Competing Interests:} None


\bibliography{sn-bibliography}


\end{document}